\newcommand{\C}{{\mathbb C}}
\newcommand{\Q}{{\mathbb Q}}
\newcommand{\R}{{\mathbb R}}
\newcommand{\Z}{{\mathbb Z}}
\newcommand{\F}{{\mathbb F}}
\newcommand{\eu}{{\mathrm e}}
\newcommand{\jac}{{\mathcal J}}
\newcommand{\hess}{{\mathcal H}}
\newcommand{\grad}{\nabla}
\newcommand{\rk}{\operatorname{rk}}
\newcommand{\GL}{\operatorname{GL}}
\newcommand{\trdeg}{\operatorname{trdeg}}
\newcommand{\tp}{^{\rm t}}
\newcommand{\Mat}{\operatorname{Mat}}
\newcommand{\p}{\mathfrak{p}}
\newcommand{\q}{\mathfrak{q}}
\newcommand{\imp}{{\mathversion{bold}$\Rightarrow$ }}
\newcommand{\bcdot}{$\discretionary{\mbox{$ \cdot $}}{}{}$}
\newcommand{\parder}[3][Default]{
	\frac{\partial \ifthenelse{\equal{#1}{Default}}{}{^{#1}}#2}{
              \partial #3 \ifthenelse{\equal{#1}{Default}}{}{^{#1}}}}
\theoremstyle{plain}
\newtheorem{theorem}{Theorem}[section]
\newtheorem{proposition}[theorem]{Proposition}
\newtheorem{lemma}[theorem]{Lemma}
\newtheorem{corollary}[theorem]{Corollary}
\theoremstyle{definition}
\newtheorem{definition}[theorem]{Definition}
\theoremstyle{remark}
\theoremstyle{plain}
\newtheorem{conjecture}[theorem]{Conjecture}
\numberwithin{equation}{section}
\newcommand{\nolisttopbreak}{\vspace{\topsep}\nobreak\@afterheading}
\newenvironment{listproof}[1][\proofname]{\begin{proof}[#1]\mbox{}\nolisttopbreak}{\end{proof}}
\newcommand{\HE}{%
\begin{tikzpicture}[x=0.25ex,y=0.25ex]
\useasboundingbox (-11,2) rectangle (1,8);
\fill[black] (-10.15,10.15) rectangle (0.15,-0.15);
\fill[blue!10!cyan!20] (-9.85,9.85) rectangle (-0.15,0.15);
\fill[green] (-0.15,0.15) -- (-4.15,0.15) arc (150:120:10.92859) -- cycle;
\fill[brown] (-3.3,0.4) -- (-5.7,2.7) -- (-4.9,3.6) -- (-2.3,1.6);
\fill[brown] (-0.4,3.3) -- (-2.7,5.7) -- (-3.6,4.9) -- (-1.6,2.3);
\fill[green!80!black] (-5.7,2.7) arc (-30:-160:1) arc (-70:-200:1) arc (-100:-240:1) arc (240:-20:1) arc (120:-30:1) arc (115:-30:1.1);
\fill[green!80!black] (-2.7,5.7) arc (-60:70:1) arc (-20:110:1) arc (10:150:1) arc (30:290:1) arc (150:300:1) arc (155:297:1.1);
\end{tikzpicture}}
\title{Polynomial Hessians with small rank}
\author{Michiel de Bondt\footnote{The author's Ph.D. project was supported by The Netherlands
Organization for Scientific Research (NWO).}}
\begin{document}

\maketitle

\begin{abstract}
In this paper, the results in [Singular Hessians, J. Algebra 282 (2004), no.\@ 1, 195--204], for polynomial Hessians with determinant zero in small dimensions $r+1$, are generalized to similar results in arbitrary dimension, for polynomial Hessians with rank $r$. All of this is over a field $K$ of characteristic zero. The results in [Singular Hessians, J. Algebra 282 (2004), no.\@ 1, 195--204] are also reproved in a different perspective.

One of these results is the classification by Gordan and Noether of homogeneous polynomials in $5$ variables, for which the Hessians determinant is zero. This result is generalized to homogeneous polynomials in general, for which the Hessian rank is 4. Up to a linear transformation, such a polynomial is either contained in $K[x_1,x_2,x_3,x_4]$, or contained in
$$
K[x_1,x_2,p_3(x_1,x_2)x_3+p_4(x_1,x_2)x_4+\cdots+p_n(x_1,x_2)x_n]
$$
for certain $p_3,p_4,\ldots,p_n \in K[x_1,x_2]$ which are homogeneous of the same degree.

Furthermore, a new result which is similar to those in [Singular Hessians, J. Algebra 282 (2004), no.\@ 1, 195--204], is added, namely about polynomials $h \in K[x_1,x_2,x_3,x_4,x_5]$, for which the last four rows of the Hessian matrix of $t h$ are dependent. Here, $t$ is a variable, which is not one of those with respect to which the Hessian is taken. This result is generalized to arbitrary dimension as well: the Hessian rank of $t h$ is $4$ and the first row of the Hessian matrix of $t h$ is independent of the other rows.
\end{abstract}

\section{Introduction}

Let $K$ be an arbitrary field and let $L$ be an arbitrary extension field of $K$.

Write $x = (x_1,x_2,\ldots,x_n)$, $X = (x_1,x_2,\ldots,x_N)$, $y = (y_1,y_2,\ldots,y_m)$ and $Y = (y_1,y_2,\ldots,y_M)$.

Let $F \in K[x]^M$. Write $\jac F$ for the Jacobian matrix of $F$, so
$$
\jac F = \left( \begin{array}{cccc}
\parder{}{x_1} F_1 &  \parder{}{x_2} F_1 &  \cdots & \parder{}{x_n} F_1 \\
\parder{}{x_1} F_2 &  \parder{}{x_2} F_2 &  \cdots & \parder{}{x_n} F_2 \\
\vdots & \vdots & & \vdots \\
\parder{}{x_1} F_M &  \parder{}{x_2} F_M &  \cdots & \parder{}{x_n} F_M
\end{array} \right)
$$
Let $f \in K[x]$. To define $\jac f$, we associate $K[x]$ with $K[x]^1$. Write $\grad f$ for the gradient map of $f$, i.e.\@ the polynomial map of partial derivatives of $f$, which we see as a column vector of polynomials. So
$$
\grad f = (\jac f)\tp = \left( \begin{array}{c}
\parder{}{x_1} f \\ \parder{}{x_2} f \\ \vdots \\ \parder{}{x_n} f
\end{array} \right)
$$
Write $\hess f$ for the Hessian matrix of $f$, so
$$
\hess f = \jac(\grad f) = \left( \begin{array}{cccc}
\parder{}{x_1} \parder{}{x_1} f &  \parder{}{x_2} \parder{}{x_1} f &  \cdots & \parder{}{x_n} \parder{}{x_1} f \\
\parder{}{x_1} \parder{}{x_2} f &  \parder{}{x_2} \parder{}{x_2} f &  \cdots & \parder{}{x_n} \parder{}{x_2} f \\
\vdots & \vdots & \ddots & \vdots \\
\parder{}{x_1} \parder{}{x_n} f &  \parder{}{x_2} \parder{}{x_n} f &  \cdots & \parder{}{x_n} \parder{}{x_n} f
\end{array} \right)
$$
Let $B \in \Mat_{m,M}(K)$ and $C \in \Mat_{N,n}(K)$. 
If $H \in K[X]^M$ and $h \in K[X]$, then
\begin{align}
\jac \big(BH(Cx)\big) &= B \cdot (\jac H)|_{X=Cx} \cdot C \\
\grad \big(h(Cx)\big) &= C\tp \cdot (\grad h)|_{X=Cx} \\
\hess \big(h(Cx)\big) &= \jac \big(\grad(h(Cx))\big) = C\tp \cdot (\hess h)|_{X=Cx} \cdot C
\end{align}
Assume from now on in this section that $K$ has characteristic zero. Let $h \in K[x]$ such that $\det \hess h = 0$. 
In 
\cite{MR2095579}, $h$ is classified in the following cases
\begin{itemize}

\item $n \le 3$,

\item $n = 4$ and the last $3$ rows of $\hess h$ are dependent.

\item $n \le 5$ and $h$ is homogeneous.

\end{itemize}
To obtain these classification results, some other results are used. To explain these results, we first observe that $\det \hess h = 0$ is equivalent to the existence of a nonzero $f \in K[y_1,y_2,\ldots,y_n]$, such that $f(\grad h) = 0$, see e.g.\@ proposition 1.2.9 of either \cite{MR1790619} or \cite{homokema}.

Now take any $T \in \GL_n(K)$. Then
$$
(f \circ T) \big(\grad\big(h((T^{-1})\tp x)\big)\big) = 
f \big(T T^{-1} (\grad h) \big((T^{-1})\tp x\big)\big) =
0|_{x=(T^{-1})\tp x} = 0
$$
So if we transform $f$ linearly, then we can transform
$h$ linearly in a corresponding way. 

The results that are used to obtain the above-mentioned classification results are (i) and (iii) of the following.
\begin{enumerate}[\upshape (i)]

\item If $2 \le n \le 3$, then we can choose $f$ and $T$ such that $(f \circ T) \in K[y_2,\ldots,y_n]$.

\item If $2 \le n \le 4$ and $f$ can be taken such that $f(y_1+a_1,y_2+a_2,\ldots,y_n+a_n)$ is homogeneous for some $a \in K^n$, then we can choose $f$ and $T$ such that $f(y_1+a_1,y_2+a_2,\ldots,y_n+a_n)$ is homogeneous and $(f \circ T) \in K[y_2,\ldots,y_n]$.

\item If $3 \le n \le 5$ and $h$ is homogeneous, then we can choose $f$ and $T$ such that $(f \circ T) \in K[y_3,\ldots,y_n]$.

\end{enumerate}
To obtain these results, so-called quasi-translations are studied. For more information about quasi-translations, see \cite{MR3794325} and \cite{MR3804055} and the references therein. The quasi-translation results are proved with geometric techniques for the case where $K = \C$. The reduction to the general case is not very hard, but \cite{MR2095579} is vague on this point. This is fixed in this paper.

\subsection{Apices at infinity}

The study of quasi-translations yields more than just results (i), (ii) and (iii) above. These additional results could have been used further in the process of obtaining the classifications of $h$ in \cite{MR2095579}. But we did not make use of quasi-translations any further in \cite{MR2095579}.

In this paper, we will not make use of quasi-translations beyond (i), (ii) and (iii) above either. This is because we were not able to extend the theory of quasi-translations for the above results from rank $r$ in dimension $r + 1$ to rank $r$ in general. 

Since we generalize from rank $r$ in dimension $r + 1$ to rank $r$ in general with techniques that do not require quasi-translations, we only need the results (i), (ii) and (iii) for the case where $\rk \hess h = n - 1$. The following are reformulations of (i), (ii) and (iii) combined with the condition that $\rk \hess h = n - 1$.
\begin{enumerate}[\upshape (i$'$)]

\item If $2 \le n \le 3$ and $\rk \hess h = n-1$, then the image of $\grad h$ has an apex at infinity. 

\item If $2 \le n \le 4$, $\rk \hess h = n-1$ and the image of $\grad h$ has an apex, then the image of $\grad h$ has an apex at infinity.

\item If $3 \le n \le 5$, $\rk \hess h = n-1$ and $h$ is homogeneous, then the image of $\grad h$ has two apices at infinity.

\end{enumerate}
We define apex (at infinity) in section \ref{sec:apex}. Some authors use the term vertex instead of apex. Since the term vertex is already used in graph theory, we follow Robin Hartshorne here.

The first part of the generalization from rank $r$ in dimension $r + 1$ to rank $r$ in general is doing this for (i$'$), (ii$'$) and (iii$'$), i.e.\@ obtaining the following.
\begin{enumerate}[\upshape (i$''$)]

\item If $1 \le \rk \hess h \le 2$, then the image of $\grad h$ has an apex at infinity. 

\item If $1 \le \rk \hess h \le 3$ and the image of $\grad h$ has an apex, then the image of $\grad h$ has an apex at infinity.

\item If $2 \le \rk \hess h \le 4$ and $h$ is homogeneous, then the image of $\grad h$ has two apices at infinity.

\end{enumerate}
If we take $n$ one higher than its upper bound in (i$'$) and (ii$'$), then we get claims for which we do not know if they are true. These problems are equivalent to the claims in Problem 1 and Problem 2 in the last section of \cite{MR3794325}, except that Problem 2 is misformulated: $R$ is allowed to be adapted to meet the formula (otherwise Problem 2 has a negative answer). 

But we do know that the second claim implies the first claim. Namely, if $h$ is a counterexample to (i$'$) with $n = 4$, then (ii$'$) tells us that the image of $\grad h$ does not have an apex. From that, one can infer that $(h + x_5)^2$ is a counterexample to (ii$'$) with $n = 5$.

So we did not prove (i$'$) with $n=4$ and neither (ii$'$) with $n = 5$. But we did derive their generalizations from them. More precisely, we proved the following.
\begin{enumerate}[\upshape(i$''$)]
\setcounter{enumi}{3}

\item Suppose that $\rk \hess h = 3$ (and the image of $\grad h$ does not have an apex). If the image of $\grad h$ does not need to have an apex at infinity for general $n$, then it does not need that already for $n = 4$. 

\item Suppose that $\rk \hess h = 4$ and the image of $\grad h$ does have an apex. If the image of $\grad h$ does not need to have an apex at infinity for general $n$, then it does not need that already for $n = 5$. 

\end{enumerate}
If the image of $\grad h$ does have an apex in (iv$''$), then the image of $\grad h$ will have an apex at infinity, because of (ii$''$). So one may assume that the image of $\grad h$ does not have an apex in (iv$''$).

So we were able to generalize the existence of apices at infinity to arbitrary dimensions for the situations where we needed it. The proofs of these results start in section \ref{sec:reddimrplus1}, where (i$''$) and (ii$''$) are reduced to (i$'$) and (ii$'$) respectively. The claims of lemma \ref{slem} yield (i$''$), (ii$''$) and (iii$''$). The proofs are finished in section \ref{iiiniv}, where (iv$''$), and (v$''$) are proved. 

The proofs are quite technical, especially the proof of (v$''$). Since (iv$''$) and (v$''$) are not real results, but only reductions of problems to the lowest dimension, the reader can choose to ignore these results, by skipping section \ref{iiiniv}.
 
So we obtained the reductions that we wanted. But it would be more desirable to get a general result saying that one can always reduce to dimension $r+1$. The following would suffice to obtain such a result.

\begin{conjecture}
Let $H \in K[x]^m$, such that $H$ does not have an apex at infinity and $\rk \jac H = m - 2$. Then there exists a $p \in K^m$, such that $(p_1:p_2:\cdots:p_m)$ is the only apex at infinity of $H + x_{n+1}p$.
\end{conjecture}

\subsection{Extension of scalars}

In \cite{MR2095579}, the proof of the Gordan-Noether classification comes down to the following if $\rk \hess h = n - 1 = 4$. Let $s$ be the number of independent apices at infinity of the image of $\grad h$. Then $s \le \rk \hess h = 4$. On account of (iii$'$) above, $s \ge 2$, so $2 \le s \le 4$. 

Since $\grad h$ is homogeneous of degree $\deg h - 1 > 0$, one infers that $(0,0,0,0,0)$ is an apex of the image of $\grad h$. It can be shown that due to this, the case $s = n - 2 = 3$ does not occur. So either $s = 2$ or $s = 4$. If $s = 4$, then $h$ is degenerate. So suppose that $s = 2$. By way of a linear transformation, the apices at infinity become $(1:0:0:0:0)$ and $(0:1:0:0:0)$, and linear combinations thereof.

Then there exist a nonzero $f \in K[y_3,y_4,y_5]$, such that $f(\grad h) = 0$. So 
$$
\rk \jac\Big(\parder{}{x_3} h, \parder{}{x_4} h, \parder{}{x_5} h\Big) = 2
$$
The image of the map $(\parder{}{x_3} h, \parder{}{x_4} h, \parder{}{x_5} h)$ does not have an apex $(p_3 : p_4 : p_5)$ at infinity, because otherwise $(0 : 0 : p_3 : p_4 : p_5)$ would be an apex at infinity of the image of $\grad h$. 

But we can view $h$ as a polynomial over $K(x_1,x_2)$, with
variables $x_3,x_4,x_5$, too. Then $(\parder{}{x_3} h, \parder{}{x_4} h, \parder{}{x_5} h)$ is the gradient map of 
$h$. Furthermore,
\begin{equation} \label{extscaleq}
\rk \hess_{x_3,x_4,x_5} h \le \rk \jac\Big(\parder{}{x_3} h, \parder{}{x_4} h, \parder{}{x_5} h\Big) = 2
\end{equation}

If $\rk \hess_{x_3,x_4,x_5} h = 2$, then it follows from (i$'$) above that the image of $(\parder{}{x_3} h, \parder{}{x_4} h, \parder{}{x_5} h)$ does have an apex at infinity as a map over $K(x_1,x_2)$. This base field-relative existence of an apex at infinity requires \eqref{extscaleq} to be strict, i.e.\@
\begin{equation} \label{extscaleqs}
\rk \hess_{x_3,x_4,x_5} h \le \rk \jac\Big(\parder{}{x_3} h, \parder{}{x_4} h, \parder{}{x_5} h\Big) - 1 = 1
\end{equation}

Suppose first that $\rk \hess_{x_3,x_4,x_5} h = 1$. From the fact that $(\parder{}{x_3} h, \parder{}{x_4} h, \parder{}{x_5} h)$ is homogeneous of degree $\deg h - 1 > 0$, one infers that $(0,0,0)$ is an apex over $K$ of the image of $(\parder{}{x_3} h, \parder{}{x_4} h, \parder{}{x_5} h)$. If $(0,0,0)$ is an apex over $K(x_1,x_2)$ of $(\parder{}{x_3} h, \parder{}{x_4} h, \parder{}{x_5} h)$ as well, then $h$ is a polynomial in a linear form over $K(x_1,x_2)$, i.e.\@
\begin{equation} \label{extscalrk1}
h \in K(x_1,x_2)[p_3(x_1,x_2) x_3 + p_4(x_1,x_2) x_4 + p_5(x_1,x_2) x_5]
\end{equation}
If $(0,0,0)$ is \emph{not} an apex over $K(x_1,x_2)$ of $(\parder{}{x_3} h, \parder{}{x_4} h, \parder{}{x_5} h)$, then 
the property that $(0,0,0)$ is an apex is base field-relative as well. This requires \eqref{extscaleqs} to be strict as well, i.e.\@
\begin{equation} \label{extscaleqss}
\rk \hess_{x_3,x_4,x_5} h \le \rk \jac\Big(\parder{}{x_3} h, \parder{}{x_4} h, \parder{}{x_5} h\Big) - 2 = 0
\end{equation}
This contradicts $\rk \hess_{x_3,x_4,x_5} h = 1$. 

Suppose next that $\rk \hess_{x_3,x_4,x_5} h = 0$. Then $h$ is of the form
\begin{equation} \label{extscalrk0}
h = b_3(x_1,x_2) x_3 + b_4(x_1,x_2) x_4 + b_5(x_1,x_2) x_5
\end{equation}
which is also of the form of \eqref{extscalrk1}. So $h$ is of the form \eqref{extscalrk1}.

\eqref{extscaleqss} is only proved for the situation at hand in \cite{MR2095579}, namely in the proof of \cite[Th.\@ 3.6]{MR2095579}. But \eqref{extscaleqs} appears in a general form already in \cite{MR2095579}, namely as \cite[Prop.\@ 2.3]{MR2095579}. But this results still restrict to the case where the ideal of relations over $K$ between $(\parder{}{x_3} h, \parder{}{x_4} h, \parder{}{x_5} h)$ is principal. 

In theorem \ref{xsc} (i), this restriction is removed. A reduction techique is used to do this: see the proof of theorem \ref{piaLK}. The reduction technique utilizes theorem \ref{cominher}. Theorem \ref{xsc} (iii) generalizes \eqref{extscaleqss} in a similar manner as theorem \ref{xsc} (i) generalizes \eqref{extscaleqs}

For other classifications in \cite{MR2095579}, \cite[Prop.\@ 2.3]{MR2095579} is used as well to reduce to smaller dimensions. In this paper, we reduce to smaller dimensions in similar ways, using theorem \ref{xsc}, to finally obtain corollary \ref{gradrelcor} as a classification result.

\subsection{Integral domains, extra condition, and homogeneity}

In \eqref{extscalrk0} and \eqref{extscalrk1}, $h$ is a polynomial over $R = K[x_1,x_2]$, in variables $x_3,x_4,x_5$, with Hessian rank $0$ and $1$ respectively. Plain induction only yields that $h$ is (a polynomial in) a linear form over $L = K(x_1,x_2)$. 

But a more desirable conclusion is that $h$ is (a polynomial over $R$ in) a linear form over $R$. For \eqref{extscalrk0}, this is trivial. But for \eqref{extscalrk1}, some work needs to be done. The conclusion that $h$ is a polynomial over $R$ in a linear form over $R$ can be obtained from the fact that $R$ is a gcd-domain.

Generalizations of these results are obtained in theorem \ref{rtheorem}. In the case where $h$ has Hessian rank $0$, $R$ may be any integral domain. In the case where $h$ has Hessian rank $1$, $R$ must be a $\gcd$-domain. There is also a case where $h$ has Hessian rank $2$. In that case, we assume that the image of $\grad h$ has an apex over $L$, and that $R$ is a B{\'e}zout domain. The proof of theorem \ref{rtheorem} covers the largest part of section \ref{sec:rtheorem}.  

Theorem \ref{rtheorem} can be used to obtain some formulas for polynomials with small Hessian rank, but these formulas are not always closed formulas. To indicate this, assume from now on that $h$ is not homogeneous, but that $(0,0,0,0,0)$ is an apex of the image of $\grad h$. Then we obtain \eqref{extscalrk0} and \eqref{extscalrk1} again for the case where $s = 2$. 

\eqref{extscalrk0} is still a closed formula, because 
if $h$ is of this form, then $(\parder{}{x_3} h, \parder{}{x_4} h, \allowbreak\parder{}{x_5} h) = (b_3,b_4,b_5)$ are automatically algebraically dependent over $K$. But \eqref{extscalrk1} is no longer a closed formula. Take for instance $h = (x_1 x_3 + x_1 x_2 x_4 + x_2 x_5)^2$. Then $(\parder{}{x_3} h, \parder{}{x_4} h, \parder{}{x_5} h)$ are not algebraically dependent over $K$, and $\det \hess h \ne 0$.

There is an extra condition required for \eqref{extscalrk1}, namely that $tp_3, tp_4, tp_5$ are algebraically dependent over $K$. With one variable less, \eqref{extscalrk0} loses its last term, and the condition that $b_3$ and $b_4$ are algebraically dependent over $K$ is required. This condition appears in \cite[Th.\@ 3.5]{MR2095579}.

Besides the above-described generalizations from small dimensions to small rank, these are the only cases where extra conditions are needed in corollary \ref{gradrelcor} (the above-mentioned classification result). To be able to talk about algebraic dependence over $K$ in such conditions, $R$ must be a $K$-algebra. Therefore, we assume in our main theorem, which is theorem \ref{gradrel}, that $R$ is as such. In addition, we assume that $K$ is algebraically closed in the fraction field of $R$. 

But theorem \ref{gradrel} does not restrict to the situation of corollary \ref{gradrelcor}. No, $r = \rk \hess h$ may be taken larger, as long as $s$ increases likewise, so $r - s$ remains the same. This is possible because the induction by way of extension of scalars still works. But due to putting conditions on $r - s$ instead of $r$, the formulas apply to more situations. Consequently, all formulas in theorem \ref{gradrel}, except the first formula, require extra conditions.

There is however another problem, namely that a polynomial ring in more than one variable is not a B{\'e}zout domain, so theorem \ref{rtheorem} cannot be applied with Hessian rank $2$. This problem does not occur when we restrict to the situation of corollary \ref{gradrelcor}, because $s = 1$ and $R = K[x_{m+1}]$ in the case where we need theorem \ref{rtheorem} with Hessian rank $2$. 

For that reason, we provide alternative conditions on $R$ to cover the case of a polynomial ring. These conditions are that $R$ is a $\gcd$-domain, and that its fraction field satisfies L{\"u}roth theorem as an extension field of $K$. Section \ref{sec:gradrel} is mainly theorem \ref{gradrel} and its proof.

Finally, we prove that we can take $p$ and $q$ homogeneous, as far as they are present, if $h$ is homogeneous. This was proved in the proof of \cite[Th.\@ 3.6]{MR2095579} already, but only for the case where $p$ but not $q$ is present. With both $p$ and $q$ present, things are harder to prove. The homogeneity result consists of the last two claims of theorem \ref{gradrel}, in which it is assumed that $R[x_1,x_2,\ldots,x_m]$ is graded in a rather arbitrary manner. Something like that is necessary, because the variables $x_{m+1}, x_{m+2}, \ldots, x_n$ in corollary \ref{gradrelcor} are scalar variables in theorem \ref{gradrel}.

\section{\protect\HE-pairing}

Inspired by the concept of `GZ-pairing', which is defined as `pairing' in \cite{MR1621913}, we define our own pairing, namely Hessen-pairing. The icon of our pairing is inspired by the symmetry of the Hessian matrix, and the trees on the icon represent `essen', which is Dutch for `ash trees'.

\begin{definition}
Assume that $1 \le n \le N$ and $1 \le m \le M$. Let $h \in L[x]^m$ and $H \in L[X]^M$.
We say that $h$ and $H$ are \emph{\HE-paired over $K$} through matrices 
$B \in \Mat_{m,M}(L)$ and $C \in \Mat_{N,n}(L)$, if
\begin{enumerate}[\upshape (i)]
 
\item $h = B(H(Cx))$,

\item $\trdeg_K K(h) = \trdeg_K K(H)$.

\end{enumerate}
If additionally 
\begin{enumerate}[\upshape (i)]
\setcounter{enumi}{2} 

\item $B = C\tp$

\end{enumerate}
then we say that $h$ and $H$ are \emph{symmetrically \HE-paired over $K$} 
through $B$ and $C$. We say that $h$ and $H$ are (symmetrically) \HE-paired 
over $K$ if there exists matrices $B$ and $C$ as indicated above.
\end{definition}

\begin{proposition} \label{HEphi}
Suppose that $h \in L[x]^m$ and $H \in L[X]^M$ are \HE-paired over $K$ 
through matrices $B \in \Mat_{m,M}(L)$ and $C \in \Mat_{N,n}(L)$. Then 
\begin{equation} \label{HEphieq}
\trdeg_K K(h) = \trdeg_K K(BH) = \trdeg_K K(H)
\end{equation}
Furthermore, $f(h) = 0 \Longleftrightarrow f(BH) = 0$ for every $f \in K[y]$. 
\end{proposition}

\begin{proof}
Since $\trdeg_K K(h) \le \trdeg_K K(BH) \le \trdeg_K K(H)$, we see that 
\eqref{HEphieq} follows from property (ii) of \HE-pairing. 

From property (i) of \HE-pairing, it follows that $f(BH) = 0 \Longrightarrow f(h) = 0$ for all $f \in K[y]$. So the last claim follows from $\trdeg_K K(h) = \trdeg_K K(BH)$.
\end{proof}

\begin{lemma} \label{HEtrdeg}
Let $H \in K[X]^s$, such that $H_1,H_2,\ldots,H_s,x_{s+1},x_{s+2},\ldots,x_N$ 
is a transcendence basis over $K$ of $K(X)$. 
Then there exists a nonzero polynomial $g \in K[x]$ such that 
$$
g(H|_{x_N=cx_{N-1}},x_{s+1},x_{s+2},\ldots,x_{N-1}, c x_{N-1}) \ne 0
$$ 
implies that 
$$
H_1|_{x_N=cx_{N-1}},H_2|_{x_N=cx_{N-1}},\ldots,H_s|_{x_N=cx_{N-1}},
x_{s+1},x_{s+2},\ldots,x_{N-1}
$$ 
is a transcendence basis over $K$ of $K(x_1,x_2,\ldots,x_{N-1})$.
\end{lemma}

\begin{proof}
Take $i \le s$. Then $x_i$ is algebraically dependent over $K$ on 
$H_1,H_2,\ldots,H_s,\allowbreak x_{s+1}, \ldots, x_N$, say that $f \in K[X,t]$ such
that $f_i(H,x_{s+1},x_{s+2},\ldots,x_N,x_i) = 0$ and $f_i(H,x_{s+1},x_{s+2},\ldots,x_N,t) \ne 0$. 
Take $g_i \in K[X]$ such that $g_i(H,x_{s+1},\allowbreak x_{s+2},\allowbreak \ldots,x_N)$ is any nonzero
coefficient of $f_i(H,x_{s+1},x_{s+2},\ldots,x_N,t)$ as a polynomial in $t$ over 
$K[x]$. 

Suppose that $g_i(H|_{x_N=cx_{N-1}},x_{s+1},x_{s+2},\ldots,x_{N-1},c x_{N-1}) \ne 0$. Then 
$$
f_i(H|_{x_N = c x_{N-1}},x_{s+1},x_{s+2},\ldots,x_{N-1},c x_{N-1},x_i) = 0
$$
and 
$$
f_i(H|_{x_N = c x_{N-1}},x_{s+1},x_{s+2},\ldots,x_{N-1},c x_{N-1},t) \ne 0
$$
So $x_i$ is algebraically dependent over $K$ on 
$$
{\cal B} := H_1|_{x_N = c x_{N-1}}, H_2|_{x_N = c x_{N-1}}, \ldots, H_s|_{x_N = c x_{N-1}},
x_{s+1},x_{s+2},\ldots,x_{N-1}
$$

Put $g := g_1 g_2 \cdots g_s$ and suppose that $g(H|_{x_N=cx_{N-1}},x_{s+1},x_{s+2},\ldots,x_{N-1}, 
\allowbreak c x_{N-1}) \ne 0$. From the above paragraph, we deduce that $x_i$ is 
algebraically dependent over $K$ on ${\cal B}$ for all $i \le s$. Hence ${\cal B}$ 
is a transcendence basis over $K$ of $K(x_1,x_2,\ldots,x_{N-1})$.
\end{proof}

\begin{theorem} \label{HEpairinga}
Assume that $K$ is infinite, and let $H \in K[X]^m$. 
Suppose that $n \le N$, such that $x_n, x_{n+1}, \ldots, x_N$ are algebraically 
independent over $K(H)$. Then there exists a $C \in \Mat_{N,n}(K)$ of the form
\begin{equation} \label{Cform}
C\tp = \left( \begin{array}{cccccc}
& & & 0 & \cdots & 0 \\
& I_{n-1} & & \vdots & \cdots & \vdots \\
& & & 0 & \cdots & 0 \\
0 & \cdots & 0 & c_n & \cdots & c_N
\end{array} \right)
\qquad \mbox{where $c_n \cdots c_N \ne 0$}
\end{equation}
such that $H(Cx)$ and $H$ are \HE-paired.
\end{theorem}

\begin{proof}
Let $s := \trdeg_K K(H)$ and assume without loss of generality that $x_{s+1}, \allowbreak
x_{s+2}, \ldots, x_N$ is a transcendence basis over $K(H)$ of $K(X)$. Assume without loss of 
generality that $H_1, H_2, \ldots, H_s, x_{s+1}, x_{s+2}, \ldots, x_N$ is a transcendence 
basis over $K$ of $K(X)$. 

We may assume that $n = N-1$, because the general case follows by induction. But in order to
have the properties of $H$ of this theorem in the induced situation, we must show that 
$H_1(Cx),H_2(Cx),\ldots,H_s(Cx),x_{s+1},x_{s+2},\ldots,x_n$ is a transcendence basis over $K$ 
of $K(x)$.

Take $g$ as in lemma \ref{HEtrdeg}. There are only finitely many
$c \in K$ such that $g(H_1|_{x_N=cx_{N-1}},\ldots,H_s|_{x_N=cx_{N-1}}, x_{s+1},\ldots,x_{N-1}, c x_{N-1}) = 0$.
Since $K$ is infinite, there exists a nonzero $c \in K$ such that 
$g(H_1|_{x_N=cx_{N-1}},\ldots,H_s|_{x_N=cx_{N-1}},\allowbreak x_{s+1},\ldots,x_{N-1},c x_{N-1}) \ne 0$. 
From lemma \ref{HEtrdeg}, it follows that $H_1|_{x_N = c x_{N-1}}, \allowbreak H_2|_{x_N = c x_{N-1}}, 
\ldots, H_s|_{x_N = c x_{N-1}}, \allowbreak x_{s+1}, \ldots,x_{N-1}$ is a transcendence basis over $K$ of $K(x_1,x_2,\ldots,x_{N-1})$. So the induction is effective and 
$\trdeg_K K(H_1|_{x_N = c x_{N-1}}, \allowbreak \ldots, H_s|_{x_N = c x_{N-1}}) = s = \trdeg_K K(H)$. Therefore $\trdeg_K K(H|_{x_N = c x_{N-1}}) = \trdeg_K K(H)$.
\end{proof}

\begin{corollary} \label{HEpairingc}
Assume that $K$ is infinite, and let $H \in K[X]^N$. 
Suppose that $n \le N$, such that $x_n, x_{n+1}, \ldots, x_N$ are algebraically 
independent over $K(H)$. Suppose additionally that $H_i$ is algebraically dependent over $K$
on $H_1, H_2, \ldots, \allowbreak H_{n-1}$ for all $i$. Then there exists a 
$C \in \Mat_{N,n}(K)$ of the form of \eqref{Cform}, such that $C\tp H(Cx)$ and $H$ are 
symmetrically \HE-paired.
\end{corollary}

\begin{proof}
Let $h = (H_1, H_2, \ldots, H_{n-1})$. From theorem \ref{HEpairinga}, it follows
that there exists a $C \in \Mat_{N,n}(K)$ of the form of \eqref{Cform}, such that
$h(Cx)$ and $h$ are \HE-paired. Furthermore, the first $n-1$ components of $C\tp H(Cx)$
are $h(Cx)$, so $\trdeg_K K(h) = \trdeg_K K\big(h(Cx)\big) \le \trdeg_K K\big(C\tp H(Cx)\big)
\le \trdeg_K K(H)$. Since $\trdeg_K K(h) = \trdeg_K K(H)$, we infer that $C\tp H(Cx)$ and 
$H$ are \HE-paired.
\end{proof}

\begin{theorem} \label{HEpairings}
Assume that $K$ has characteristic zero, and let $h \in K[X]$. Suppose that $n \le N$, 
such that the first $n-1$ rows of $\hess h$ generate the row space of $\hess h$
over $K(X)$. Then there exists a $C \in \Mat_{N,n}(K)$ of the form of \eqref{Cform}, 
such that $C\tp (\grad h)(Cx)$ and $\grad h$ are symmetrically \HE-paired.
\end{theorem}

\begin{proof}
From proposition 1.2.9 of either \cite{MR1790619} or \cite{homokema}, it follows that 
$H_i$ is algebraically dependent over $K$ on $H_1, H_2, \ldots, H_{n-1}$ for all $i$.
Since $\hess h$ is symmetric, the first $n-1$ columns of $\hess h$ generate the column 
space of $\hess h$. Hence $e_n\tp, e_{n+1}\tp,\ldots,e_N\tp$ are independent over $K(x)$
of the rows of $\hess h$. From proposition 1.2.9 of either \cite{MR1790619} or 
\cite{homokema}, it follows that $x_n, x_{n+1}, \ldots, x_N$ are algebraically 
independent over $K(H)$. So the result follows from corollary \ref{HEpairingc}.
\end{proof}

\section{(Projective) image apices} \label{sec:apex}

Let $Z$ be an algebraic subset of $L^m$. 
We say that $a \in L^m$ is an \emph{apex} of $Z$ 
if $(1-\lambda)c + \lambda a \in Z$ for all $\lambda \in L$ and all $c \in Z$. 
If we think of $a$ as a point of the projective horizon, i.e.\@ $\|a\| = \infty$, 
then we can denote the direction of $a$ by $p$ to get something that is properly defined.

We say that a $p \in L^m$ is a \emph{projective apex} of $Z$ if $p \ne 0$ and
$c + \lambda p \in Z$ for all $\lambda \in L$ and all $c \in Z$. We say that $Z$ has an 
\emph{apex at infinity} if $Z$ has a projective apex. Indeed, if $\|a\| = \infty$ and $p$ 
is the direction of $a$, then the meaning of that $p$ is a projective apex of $Z$ is 
just essentially that the point $a$ at infinity is an apex of $Z$.

\begin{center}
\begin{tikzpicture}
\foreach \ang in {0.5,1.5,...,90} {
  \pgfmathsetmacro{\yl}{-2*sin(\ang*2-1)+sin(\ang*4-2)-0.7*sin(\ang*8-4)-
                        0.5*sin(\ang*12-6)+0.3*sin(\ang*16-8)+0.2*sin(\ang*20-10)}
  \pgfmathsetmacro{\yr}{-2*sin(\ang*2+1)+sin(\ang*4+2)-0.7*sin(\ang*8+4)-
                        0.5*sin(\ang*12+6)+0.3*sin(\ang*16+8)+0.2*sin(\ang*20+10)}
  \pgfmathsetmacro{\x}{-cos(\ang*2)+cos(\ang*4)-1.4*cos(\ang*8)-
                       1.5*cos(\ang*12)+1.2*cos(\ang*16)+cos(\ang*20)}
  \pgfmathsetmacro{\c}{10+0.5*abs((atan(\x)+20))};
  \fill[black!\c] (0.03*\ang-0.02,0.2*\yl) -- (0.03*\ang+0.02,0.2*\yr) -- 
                  (2.7-0.03*\ang-0.02,3-0.2*\yr) -- (2.7-0.03*\ang+0.02,3-0.2*\yl) -- cycle;
  \fill[black!\c] (0.03*\ang-0.02+5,0.2*\yl) -- (0.03*\ang+0.02+5,0.2*\yr) -- 
                  (0.03*\ang+0.02+5,3+0.2*\yr) -- (0.03*\ang-0.02+5,3+0.2*\yl) -- cycle;
}
\fill (1.35,1.5) circle (0.5mm);
\node[anchor=west] at (1.4,1.5) {apex};
\node[anchor=west] at (7.7,1.5) {projective apex};
\end{tikzpicture}
\end{center}

If $Z$ is the Zariski closure of the image of a polynomial map $H$ over an infinite field $L$, 
then we say that $a$ and $p$ as above are an \emph{image apex} of $H$ and a 
\emph{projective image apex} of $H$ respectively. For the case where $L$ is not necessarily 
infinite, we use the definition below.

Suppose that $H \in L[x]^m$. We say that $a \in L^m$ is an 
{\em image apex of $H$ (over $K$)} if
$$
f(H) = 0 \Longrightarrow f\big((1-t) H + ta\big) = 0
$$
for all $f \in L[y]$ ($f \in K[y]$). We say that $p \in L^m$ is a 
\emph{projective image apex of $H$ (over $K$)} if $p \ne 0$ and
$$
f(H) = 0 \Longrightarrow f\big(H + tp\big) = 0
$$
for all $f \in L[y]$ ($f \in K[y]$). We say that $H$ has an 
\emph{image apex at infinity (over $K$)} if $H$ has a projective 
image apex (over $K$). 

\begin{proposition} \label{apexmap}
Let $H$ be a polynomial map to $L^m$, $B \in \Mat_{m,M}(K)$, and $c \in K^m$.
\begin{enumerate}[\upshape (i)]
 
\item If $a \in L^m$ is an image apex of $H$ over $K$, then $B a + c$ is an image apex of $B H + c$ over $K$.

\item If $p \in L^m$ is a projective image apex of $H$ over $K$ and $B p \ne 0$, then $B p$ is a projective image apex of $B H + c$ over $K$.

\end{enumerate}
\end{proposition}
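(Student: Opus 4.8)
The plan is to reduce both parts to one elementary observation: pulling back a test polynomial along the affine map $v\mapsto Bv+c$ produces a test polynomial of the same kind, because the entries of $B$ and of $c$ lie in $K$. Concretely, write $y=(y_1,\ldots,y_m)$ for the coordinates on the target of $BH+c$ and let $z=(z_1,\ldots,z_n)$ be coordinates on the target of $H$. Given $f\in K[y]$, I would set $g:=f(Bz+c)$; this is a polynomial in $K[z]$ (and in $L[z]$ whenever $f\in L[y]$, which will dispose of the parenthetical ``over $L$'' versions at no extra cost). From the definition of $g$ and linearity of $B$ one reads off $g(H)=f(BH+c)$ together with, for any vector $u$ over $L$,
$$
g\big((1-t)H+tu\big)=f\big((1-t)(BH+c)+t(Bu+c)\big),\qquad g(H+tu)=f\big((BH+c)+t\,Bu\big),
$$
where the first identity uses $(1-t)c+tc=c$.

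For part (i), I would take an arbitrary $f\in K[y]$ with $f(BH+c)=0$; then $g(H)=0$, so the hypothesis that $a$ is an image apex of $H$ over $K$, applied to $g\in K[z]$, gives $g((1-t)H+ta)=0$, which by the first displayed identity with $u=a$ is precisely $f\big((1-t)(BH+c)+t(Ba+c)\big)=0$. Since $f$ was arbitrary, $Ba+c$ is an image apex of $BH+c$ over $K$. Part (ii) runs identically: $f(BH+c)=0$ forces $g(H)=0$, hence $g(H+tp)=0$ by the projective-apex hypothesis, hence $f\big((BH+c)+t\,Bp\big)=0$ by the second identity, so $Bp$ is a projective image apex of $BH+c$ over $K$. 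The ``over $L$'' variants follow word for word by letting $f$ range over $L[y]$ and $g$ over $L[z]$.

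I do not expect a genuine obstacle: the argument is entirely the substitution $g=f(Bz+c)$ plus the identity $B((1-t)H+tu)+c=(1-t)(BH+c)+t(Bu+c)$. The only point deserving a remark is the clause $p\ne 0$ that is built into the definition of projective image apex: if $B$ has a nontrivial kernel then $Bp$ may vanish, so part (ii) must be read as the assertion that $Bp$ is a projective image apex of $BH+c$ provided $Bp\ne 0$, there being nothing to prove when $Bp=0$.
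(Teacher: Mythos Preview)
Your proof is correct and follows essentially the same route as the paper: both pull back the test polynomial $f$ along the affine map $v \mapsto Bv + c$ to obtain $g(z) = f(Bz+c) \in K[z]$ and then invoke the apex hypothesis on $g$; the paper simply leaves the introduction of $g$ implicit. Your remark about the case $Bp = 0$ is well-taken --- the paper's statement tacitly reads ``projective image apex or zero,'' and this is how the proposition is used elsewhere.
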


\begin{proof} Take $f \in K[y]$ arbitrary.
\begin{enumerate}[\upshape (i)]
 
\item Assume that $a \in L^m$ is an image apex of $H$ over $K$. 
Suppose that $f(BH + c) = 0$ for some $f \in K[y]$. Since $a$ is an image 
apex of $H$ over $K$, $f(B((1-t)H + t a) + c) = 0$. 
By linearity of $B$, $f((1-t) B H + t B a + c) = 0$. 
Hence $f((1-t) (B H + c) + t (B a + c)) = 0$ indeed.

\item Assume that $p \in L^m$ is a projective image apex of $H$ over $K$ and $B p \ne 0$. Suppose that $f(BH + c) = 0$ for some $f \in K[y]$. Since $p$ is a projective image apex of $H$ over $K$, $f(B(H + t p) + c) = 0$. By linearity of $B$, $f(B H + t B p + c) = 0$. Hence $f((B H + c) + t B p) = 0$ indeed. \qedhere

\end{enumerate}
\end{proof}

\begin{proposition} \label{trdegprop}
Let $H$ be a polynomial map to $L^m$.
\begin{enumerate}[\upshape (i)]

\item $a \in L^m$ is an image apex of $H$ over $K$, if and only if 
$$
\trdeg_K K\big((1-t)H+ta\big) = \trdeg_K K(H)
$$

\item $p \in L^m$ is a projective image apex of $H$ over $K$, if and only if 
$$
\trdeg_K K\big(H+tp\big) = \trdeg_K K(H)
$$ 

\end{enumerate}
\end{proposition}

\begin{proof}
We only prove (i), since (ii) is similar. Since $f(H) = 0 \Longleftarrow f\big((1-t)H+ta\big) = 0$ follows by substituting $t = 0$, we deduce that $\trdeg_K K\big((1-t)H+ta\big) = \trdeg_K K(H)$, if and only if $f(H) = 0 \Longrightarrow f\big((1-t)H+ta\big) = 0$ for all $f \in k[y]$, i.e.\@ $a$ is an image apex of $H$ over $K$.
\end{proof}

\begin{proposition} \label{algdepprop}
Let $H$ be a polynomial map to $L^m$
\begin{enumerate}[\upshape (i)]

\item Let $a \in L^m$. If the algebraic closure of $K\big((1-t)H+ta\big)$ in $L(x,t)$ contains a nonzero element whose numerator is divisible by $t$, then $a$ is \emph{not} an image apex of $H$ over $K$.

Conversely, if $a$ is \emph{not} an image apex of $H$ over $K$, then $K\big[(1-t)H+ta\big]$ contains a nonzero element which is divisible by $t$.

\item Let $p \in L^m$. If the algebraic closure of $K\big(H+tp\big)$ contains a nonzero element whose numerator is divisible by $t$, then $p$ is \emph{not} a projective image apex of $H$ over $K$.

Conversely, if $p$ is \emph{not} a projective image apex of $H$ over $K$, then $K\big[H+tp\big]$ contains a nonzero element which is divisible by $t$.

\end{enumerate}
\end{proposition}

\begin{proof}
We only prove (i), since (ii) is similar. Suppose that $a \in L^m$ is an image apex over $K$, and that $h \in L(x,t) \setminus \{0\}$ is algebraic over $K\big((1-t)H+ta\big)$, such that $h|_{t=0} = 0$. Take $f \in K[y,u]$ of minimum degree, such that $f\big((1-t)H+ta,h\big) = 0 \ne f\big((1-t)H+ta,u\big)$. Let $f_0 \in K[y]$ be the constant part of $f$ with respect to $u$. By substituting $t=0$ in $f\big((1-t)H+ta,h\big) = 0$, we see that $f_0(H) = 0$. Hence $f_0\big((1-t)H+ta\big) = 0$. So we can replace $f$ by $(f-f_0)/u$. This contradicts the fact $f$ has minimum degree.

Suppose that $a \in L^m$ is not an image apex over $K$. Then there exists an $f \in K[y]$, such that $f(H) = 0 \ne f\big((1-t)H+ta\big)$. Hence $t \mid f\big((1-t)H+ta\big) \ne 0$.
\end{proof}

\begin{proposition} \label{relprop}
Suppose that $H, H' \in L[x]^m$, such that $f(H) = 0 \Longleftrightarrow f(H') = 0$
for every $f \in K[y]$. If $q \in K^m$, then $q$ is a (projective) image apex of 
$H$ over $K$, if and only if $q$ is a (projective) image apex of $H'$ over $K$.
\end{proposition}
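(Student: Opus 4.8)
The plan is to reduce the property of being a (projective) image apex entirely to the ideal
$$
\p := \big(f \in K[y] \,\big|\, f(H) = 0\big),
$$
which by hypothesis coincides with $\big(f \in K[y] \,\big|\, f(H') = 0\big)$, together with the point $q$ itself. Since this ideal is the same for $H$ and for $H'$, the equivalence will follow at once.

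First I would fix $f \in K[y]$. As $q \in K^m$, the expression $f\big((1-t)y + tq\big)$ is a polynomial in $K[y,t]$; writing it as $\sum_{j} g_j(y)\, t^{j}$ with $g_j \in K[y]$ and substituting $y = H$ gives $f\big((1-t)H + tq\big) = \sum_{j} g_j(H)\, t^{j}$ in $L[x,t]$. Because $L[x]$ is a domain, a polynomial in $t$ over $L[x]$ vanishes precisely when each of its coefficients does, so $f\big((1-t)H + tq\big) = 0$ if and only if $g_j \in \p$ for all $j$. The identical computation with $H'$ in place of $H$ shows that $f\big((1-t)H' + tq\big) = 0$ if and only if $g_j \in \big(f \in K[y] \,\big|\, f(H') = 0\big) = \p$ for all $j$. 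Hence
$$
f\big((1-t)H + tq\big) = 0 \quad\Longleftrightarrow\quad f\big((1-t)H' + tq\big) = 0
$$
for every $f \in K[y]$. Restricting to $f \in \p$, it follows that $f\big((1-t)H + tq\big) = 0$ for all $f \in \p$ if and only if $f\big((1-t)H' + tq\big) = 0$ for all $f \in \p$; since $\p = \big(f \in K[y] \mid f(H)=0\big) = \big(f \in K[y] \mid f(H')=0\big)$, the first assertion is that $q$ is an image apex of $H$ over $K$ and the second that $q$ is an image apex of $H'$ over $K$. This proves the claim in the image-apex case.

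For the projective case the argument is the same with $f(y + tq)$ in place of $f\big((1-t)y + tq\big)$: writing $f(y + tq) = \sum_j g_j(y)\, t^{j}$ one gets $f(H + tq) = 0 \Leftrightarrow g_j \in \p$ for all $j \Leftrightarrow f(H' + tq) = 0$, and the extra requirement $q \neq 0$ in the definition of a projective image apex does not refer to $H$ at all. I do not anticipate a real obstacle; the one point needing a little care is the reduction to coefficientwise vanishing, which uses that $q \in K^m \subseteq L^m$, so that $(1-t)H + tq$ and $H + tq$ are genuine elements of $L[x,t]^m$ into which $f \in K[y]$ may be substituted. Alternatively, once the displayed equivalence is known for all $f \in K[y]$, one can conclude via Proposition~\ref{algdepprop}: that equivalence says the polynomial maps $(1-t)H + tq$ and $(1-t)H' + tq$ have the same vanishing ideal over $K$, hence equal transcendence degree over $K$, and likewise $\trdeg_K K(H) = \trdeg_K K(H')$; the characterization of image apices through equality of transcendence degrees then finishes the proof.
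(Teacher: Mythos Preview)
Your proof is correct and follows essentially the same approach as the paper: expand $f\big((1-t)y+tq\big)$ as a polynomial in $t$ with coefficients $g_j \in K[y]$ (using $q \in K^m$), and observe that vanishing of $f\big((1-t)H+tq\big)$ is equivalent to all $g_j$ lying in the common ideal $\p$. The paper states only one implication and leaves the converse to the evident symmetry in $H$ and $H'$, whereas you make this symmetry explicit by reducing everything to a property of $\p$; the alternative route via Proposition~\ref{algdepprop} is a pleasant extra but not needed.
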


\begin{proof}
We only prove the case where $q$ is an image apex of $H$ over $K$, since the 
other case is similar. Suppose that $q \in K^m$ is an image apex of $H$ over $K$. 
Take $f \in K[y]$ such that $f(H') = 0$. Then $f(H) = 0$ and $f\big((1-t)H + t q\big) = 0$. 
Hence for each $i$, $g_i(H) = 0$, where $g_i$ is the coefficient of $t^i$ in 
$f\big((1-t)y + tq\big) \in K[y][t]$. By assumption, $g_i(H') = 0$ for every 
$i$ as well. So $f\big((1-t)H' + t q\big) = 0$.
\end{proof}

\begin{lemma} \label{tulem}
Let $H \in L[x]^m$ and $f \in K[y]$. Then for the following statements:
\begin{enumerate}[\upshape (1)]

\item $f\big((1-p_{m+1}t)H + tp\big) = 0$ and 
$$
g(H) = 0 \Longrightarrow g\big((1-q_{m+1}u)H+uq\big) = 0
$$
for every $g \in K(p,p_{m+1})(y)$;

\item $f\big((1-p_{m+1}t)\big((1-q_{m+1}u)H + uq\big) + tp\big) = 0$;

\item $f\big((1-p_{m+1}t-q_{m+1}u)H + tp + uq\big) = 0$;

\item $f\big((1-q_{m+1}u)\big((1-p_{m+1}t)H + tp\big) + uq\big) = 0$;

\end{enumerate}
we have $(1) \Rightarrow (2) \Leftrightarrow (3) \Leftrightarrow (4)$.
\end{lemma}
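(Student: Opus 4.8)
The plan is to present statements (2), (3) and (4) as three specialisations of a single two-variable family, and to obtain $(1)\Rightarrow(2)$ from a decomposition by degree in the auxiliary variable $t$. For the equivalences I would introduce
\[
F(v,w) \;:=\; (1-p_{m+1}v-q_{m+1}w)\,H + vp + wq ,
\]
which is affine in the fresh variables $v,w$. A routine expansion identifies the argument of $f$ in (3) with $F(t,u)$, the one in (2) with $F\big(t,(1-p_{m+1}t)u\big)$, and the one in (4) with $F\big((1-q_{m+1}u)t,u\big)$. In each of these three substitutions the pair put in place of $(v,w)$ generates the field $L(p,q)(t,u)$ over $L(p,q)$ — for instance $t$ and $(1-p_{m+1}t)u$ do, because $u=\big[(1-p_{m+1}t)u\big]/(1-p_{m+1}t)$ — and hence is algebraically independent over $L(p,q)$. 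Therefore each substitution is injective on the coefficient ring, so each of (2), (3), (4) is equivalent to the single identity $f\big(F(v,w)\big)=0$; in particular $(2)\Leftrightarrow(3)\Leftrightarrow(4)$.

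For $(1)\Rightarrow(2)$ I would set $\tilde f(y):=f\big((1-p_{m+1}t)y+tp\big)$, so that the first clause of (1) says $\tilde f(H)=0$ while the target (2) says $\tilde f\big((1-q_{m+1}u)H+uq\big)=0$. Expanding in powers of $t$, write $\tilde f=\sum_j t^j h_j$ with each $h_j\in K(p)(y)$ (indeed each $h_j$ is a polynomial in $y$). Since $t$ is transcendental over the field generated by $K$, $p$ and $x$, the identity $\tilde f(H)=\sum_j t^j h_j(H)=0$ forces $h_j(H)=0$ for every $j$. Applying the second clause of (1) with $g=h_j$ gives $h_j\big((1-q_{m+1}u)H+uq\big)=0$ for every $j$; multiplying by $t^j$ and summing recovers $\tilde f\big((1-q_{m+1}u)H+uq\big)=0$, which is exactly (2). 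Together with the previous paragraph this establishes $(1)\Rightarrow(2)\Leftrightarrow(3)\Leftrightarrow(4)$.

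The main obstacle I anticipate is the mismatch of `coordinate systems' in this last step: the second clause of (1) is available only for rational functions in $y$ over $K(p)$, so it cannot be fed $\tilde f$ directly, since $\tilde f$ carries the extra variable $t$. The way around it is precisely to break $\tilde f$ into its $t$-homogeneous pieces $h_j$, which do lie over $K(p)$, to treat them one at a time, and to reassemble. Everything else is bookkeeping — mainly keeping the ground fields straight ($f\in K[y]$, $H\in L[x]^m$, and the auxiliary variables $p,q,t,u$) — and does not affect the shape of the argument.
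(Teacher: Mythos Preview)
Your proof is correct and follows essentially the same approach as the paper. For $(1)\Rightarrow(2)$ you do exactly what the paper does: expand $f\big((1-p_{m+1}t)y+tp\big)$ in powers of $t$, observe that each coefficient $h_j\in K(p)[y]$ vanishes on $H$, and apply the second clause of (1) termwise. For the equivalences $(2)\Leftrightarrow(3)\Leftrightarrow(4)$ the paper simply substitutes $u\mapsto u/(1-p_{m+1}t)$ and its inverse (and the symmetric counterparts) directly, whereas you package the same substitutions through the auxiliary map $F(v,w)$; this is a cosmetic difference, not a different argument.
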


\begin{proof}
Notice that (3) is symmetric in some sense and that (2) and (4) are reflections
of each other in the same sense. Hence (3) $\Leftrightarrow$ (4) follows in a similar
manner as (2) $\Leftrightarrow$ (3). 
So the following implications remain to be proved.
\begin{description}
 
\item[(1) \imp (2)] Assume (1). Then $g_i(H) = 0$, where $g_i$ is the coefficient of $t^i$
of $f\big((1-p_{m+1}t)y+tp\big)$. By assumption, $g_i\big((1-q_{m+1}u)H + uq\big) = 0$
for each $i$ as well, which gives (2).

\item[(2) \imp (3)] Assume (2). Then 
$$
f\big((1-p_{m+1}t)H - (1-p_{m+1}t)q_{m+1}uH + (1-p_{m+1}t)uq + tp\big) = 0
$$
and (3) follows by substituting $u = u/(1-p_{m+1}t)$.

\item[(3) \imp (2)] This is the converse of (2) $\Rightarrow$ (3), so $u$ is to
be substituted by $u\cdot(1-p_{m+1}t)$. \qedhere

\end{description}
\end{proof}

\begin{corollary} \label{tucor}
Let $H \in L[x]^m$ and $q \in L^m$. Then the following holds.
\begin{enumerate}[\upshape (i)]

\item For every $a \in L^m$, $a$ is an image apex of $(1-t)H + ta$ over $L$.

\item For every $p \in L^m$, $p$ is a projective image apex of $H + tp$ over $L$.

\item Suppose that $a \in K^m$. If $q$ is a (projective) image apex of $H$ over $K$, 
then $q$ is a (projective) image apex of $(1-t)H + ta$ over $K$. 
The converse holds if $a$ is an image apex of $H$ over $K$.

\item Suppose that $p \in K^m$. If $q$ is a (projective) image apex of $H$ over $K$, 
then $q$ is a (projective) image apex of $H + tp$ over $K$. 
The converse holds if $p$ is a projective image apex of $H$ over $K$.

\end{enumerate}
\end{corollary}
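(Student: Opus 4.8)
The plan is to dispose of the two statements over $L$, (i) and (ii), by a bare change of variable, and to obtain the two statements over $K$, (iii) and (iv), by feeding their hypotheses into Lemma \ref{tulem} for the forward implications and by specializing a variable to $0$ for the converses.

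For (i), introduce a fresh variable $u$ and note the identity $(1-u)\big((1-t)H+ta\big)+ua=(1-s)H+sa$ with $s=t+u-tu$. Hence if $f\in L[y]$ satisfies $f\big((1-t)H+ta\big)=0$ identically in $L[x,t]$, applying the $L[x]$-algebra homomorphism $L[x,t]\to L[x,t,u]$ that sends $t\mapsto s$ turns this into $f\big((1-u)((1-t)H+ta)+ua\big)=0$, which is exactly the image-apex condition for $a$ on the map $(1-t)H+ta$ over $L$; equivalently, one may substitute $t\mapsto t+u$ in $f\big((1-t)H+ta\big)=0$ to reach statement $(3)$ of Lemma \ref{tulem} with $p=q=(a,1)$, and then pass to $(4)$. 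Statement (ii) is the same computation with $(H+tp)+up=H+(t+u)p$: from $f(H+tp)=0$ the substitution $t\mapsto t+u$ gives $f\big((H+tp)+up\big)=0$, so $p$ (which is nonzero, as any projective apex must be) is a projective image apex of $H+tp$ over $L$.

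For the forward direction of (iii), assume $q$ is a (projective) image apex of $H$ over $K$ and let $f\in K[y]$ with $f\big((1-t)H+ta\big)=0$. Apply Lemma \ref{tulem} with its first augmenting vector equal to $(a,1)$, so that its ``$p$-move'' on $H$ is precisely $H\mapsto(1-t)H+ta$, and with its second vector $(q,1)$ in the affine case and $(q,0)$ in the projective case. The first conjunct of statement $(1)$ of that lemma is our hypothesis $f\big((1-t)H+ta\big)=0$; the second conjunct asks that $g(H)=0\Rightarrow g\big((1-q_{m+1}u)H+uq\big)=0$ for all $g\in K(p)(y)$, but $a\in K^m$ forces $K(p)=K$, so this is just the (projective) image-apex property of $q$ on $H$ over $K$, extended from $K[y]$ to $K(y)$ by clearing denominators (a denominator that survives at $u=0$ survives identically). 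Thus statement $(1)$ holds, so the lemma yields statement $(4)$, namely $f\big((1-q_{m+1}u)((1-t)H+ta)+uq\big)=0$ — exactly the (projective) image-apex condition for $q$ on $(1-t)H+ta$ over $K$, with $q\neq0$ inherited in the projective case. For the converse one additionally assumes $a$ is an image apex of $H$ over $K$: if $f\in K[y]$ and $f(H)=0$, then $f\big((1-t)H+ta\big)=0$ since $a$ is an image apex of $H$, hence $f\big((1-u)((1-t)H+ta)+uq\big)=0$ (resp.\ $f\big((1-t)H+ta+uq\big)=0$) since $q$ is a (projective) image apex of $(1-t)H+ta$, and specializing $t\mapsto0$ gives $f\big((1-u)H+uq\big)=0$ (resp.\ $f(H+uq)=0$).

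Statement (iv) is proved in the same way, now applying Lemma \ref{tulem} with $p_{m+1}=0$ so that the ``$p$-move'' on $H$ is $H\mapsto H+tp$, with the same choices for the second vector; again $p\in K^m$ gives $K(p)=K$, the second conjunct of $(1)$ becomes the image-apex hypothesis on $H$, statement $(4)$ becomes the conclusion on $H+tp$, and the converse specializes $t\mapsto0$ after using that $p$ is a projective image apex of $H$ to pass from $f(H)=0$ to $f(H+tp)=0$. The only genuine obstacles I anticipate are bookkeeping ones: choosing the augmenting vectors $(a,1)$, $(p,0)$, $(q,1)$, $(q,0)$ so that the uniform ``$(1-p_{m+1}t)$'' notation of Lemma \ref{tulem} specializes correctly to the affine and the projective move in each of the four sub-cases, and being careful with the (harmless) passage from $K[y]$ to $K(p)(y)=K(y)$ in the second hypothesis of that lemma when $a$, resp.\ $p$, lies in $K^m$.
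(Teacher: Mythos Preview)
Your proof is correct and follows essentially the same route as the paper: parts (i) and (ii) via the substitution $t\mapsto t+u$ (your direct identity $s=t+u-tu$ for (i) is a harmless variant), the forward implications in (iii) and (iv) via $(1)\Rightarrow(4)$ of Lemma~\ref{tulem} with first augmenting vector $(a,1)$ resp.\ $(p,0)$, and the converses by composing the two apex moves and specializing $t=0$. Your extra care about $K(p)(y)$ versus $K[y]$ is harmless---the paper's proof of $(1)\Rightarrow(2)$ in Lemma~\ref{tulem} only ever applies the second hypothesis to polynomial coefficients $g_i\in K(p)[y]$, so once $a\in K^m$ (resp.\ $p\in K^m$) forces $K(p)=K$, the polynomial image-apex hypothesis on $H$ is already enough.
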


\begin{proof}
If we substitute $t$ by $t + u$, then (i) and (ii) follow 
from (3) $\Rightarrow$ (4) of lemma \ref{tulem}, where we take 
$(p,p_{m+1}) = (q,q_{m+1}) = (a,1)$ and $(p,p_{m+1}) = (q,q_{m+1}) = (p,0)$ respectively.

The forward implications in (iii) and (iv) follow by taking $(p,p_{m+1}) = (a,1)$
and $(p,p_{m+1}) = (p,0)$ respectively in (1) $\Rightarrow$ (4) of lemma \ref{tulem}.

The backward implications in (iii) and (iv) follow by taking $(p,p_{m+1})$ as above,
and substituting $t = 0$ in
\begin{align*}
f(H) = 0 &\Longrightarrow f\big((1-p_{m+1}t)H + tp\big) = 0 \\
&\Longrightarrow f\big((1-q_{m+1}u)\big((1-p_{m+1}t)H + tp\big) + uq\big) = 0 \qedhere
\end{align*}
\end{proof}

\begin{theorem} \label{piacalc}
Let $H \in L[x]^m$. Then the following holds.
\begin{enumerate}[\upshape (i)]

\item If $a \in L^m$ and $b \in L^m$ are image apices of $H$ over $K$, then for all 
$\lambda \in L$, $(1-\lambda) a + \lambda b$ is an image apex of $H$ 
over $K$ as well.

\item If $a \in L^m$ and $b \in L^m$ are image apices of $H$ over $K$, then for all 
$\lambda \in L$, $\lambda(a-b)$ is either zero or a projective image 
apex of $H$ over $K$.

\item If $a \in L^m$ is an image apex and $p \in L^m$ is a projective image apex of $H$ 
over $K$, then for all $\lambda \in L$, $a + \lambda p$ is an image apex 
of $H$ over $K$ as well.

\item If $p \in L^m$ and $q \in L^m$ are projective image apices of $H$ over $K$, then 
for all $\lambda, \mu \in L$, $\lambda p + \mu q$ is either zero or a 
projective image apex of $H$ over $K$ as well.

\end{enumerate}
\end{theorem}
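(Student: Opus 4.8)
The plan is to read all four items off Lemma~\ref{tulem}, by specializing its auxiliary parameters $t,u$ to scalar multiples of a single fresh variable.

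Fix $f\in K[y]$ with $f(H)=0$ and let $s$ be a variable not occurring in $H$; I will apply the implication $(1)\Rightarrow(3)$ of Lemma~\ref{tulem}. For (i) and (ii), take $(p,p_{m+1})=(a,1)$ and $(q,q_{m+1})=(b,1)$, so that $(3)$ reads $f\big((1-t-u)H+ta+ub\big)=0$. Substituting $(t,u)=(s(1-\lambda),s\lambda)$ yields $f\big((1-s)H+s((1-\lambda)a+\lambda b)\big)=0$, which is exactly what the definition of ``image apex of $H$ over $K$'' demands of $f$ for the point $(1-\lambda)a+\lambda b$, proving (i); substituting $(t,u)=(s\lambda,-s\lambda)$ yields $f\big(H+s\lambda(a-b)\big)=0$, proving (ii). For (iii), take $(p,p_{m+1})=(a,1)$ and $(q,q_{m+1})=(p,0)$, so that $(3)$ reads $f\big((1-t)H+ta+up\big)=0$; substituting $(t,u)=(s,s\lambda)$ yields $f\big((1-s)H+s(a+\lambda p)\big)=0$, proving (iii). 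For (iv), take $(p,p_{m+1})=(p,0)$ and $(q,q_{m+1})=(q,0)$, so that $(3)$ reads $f\big(H+tp+uq\big)=0$; substituting $(t,u)=(s\lambda,s\mu)$ yields $f\big(H+s(\lambda p+\mu q)\big)=0$, proving (iv). In (ii) and (iv) the ``or is zero'' alternative costs nothing, since a zero direction is not required to be a projective image apex; in (i) and (iii) there is no such alternative, but none is needed.

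It remains to verify, in each case, hypothesis $(1)$ of Lemma~\ref{tulem} for our $f$. Its first clause $f\big((1-p_{m+1}t)H+tp\big)=0$ is immediate from $f(H)=0$ together with the assumption that the first of the two vectors (three times $a$, once $p$) is an image, resp.\ projective image, apex of $H$ over $K$ --- this is the defining property of that apex. The second clause is the real content: it asks that the second vector ($b$, $b$, $p$, $q$ respectively) still be an image, resp.\ projective image, apex of $H$ over the enlarged ground field $K(p)$ obtained by adjoining to $K$ the coordinates of the first vector.

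Establishing this second clause is the main obstacle, and it is where the hypotheses on $K$ and the earlier results enter. When the first vector lies in $K^m$ there is nothing to enlarge, and the clause is precisely Corollary~\ref{tucor}(iii),(iv); in general one argues as follows. By Proposition~\ref{algdepprop}, being an apex of $H$ over a field $F$ is the transcendence-degree identity $\trdeg_F F\big((1-q_{m+1}u)H+uq\big)=\trdeg_F F(H)$, so one must show that adjoining the coordinates of the first apex to $K$ cannot render the new variable $u$ algebraic over the field generated by the shifted $H$. Following the proof of $(1)\Rightarrow(2)$ in Lemma~\ref{tulem}, it suffices to treat the finitely many polynomials $g$ that arise as the $t$-coefficients of $f\big((1-p_{m+1}t)y+tp\big)$; each such $g$ is built from the single $K$-polynomial $f$ by Taylor expansion along the first apex, and one checks $g(H)=0\Rightarrow g\big((1-q_{m+1}u)H+uq\big)=0$ for these $g$ from the transcendence-degree statement over $K$ for the second apex, using that a substitution is an $L$-algebra homomorphism and hence cannot raise transcendence degree. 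Carrying this out while keeping careful track of which elements of $L$ must be absorbed into the ground field at each stage is the only non-formal step; the rest is the bookkeeping of substitutions above.
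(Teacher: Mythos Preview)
Your overall plan is exactly the paper's: invoke $(1)\Rightarrow(3)$ of Lemma~\ref{tulem} with the appropriate choices of $(p,p_{m+1})$ and $(q,q_{m+1})$, then specialize $t,u$ to scalar multiples of a single variable. The substitutions you write down are the paper's, modulo cosmetic renaming.

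The genuine problem is the one you flag as ``the main obstacle'': the second clause of hypothesis~(1) in Lemma~\ref{tulem} asks that the \emph{second} vector be a (projective) image apex of $H$ over the enlarged field $K(\text{first vector})$, not merely over $K$. Your paragraph addressing this is not a proof --- the sentence about checking the implication for the Taylor coefficients ``from the transcendence-degree statement over $K$ for the second apex, using that a substitution is an $L$-algebra homomorphism'' has no content, and in fact the required implication is \emph{false} for general $H\in L[x]^m$. Concretely: take $K=\mathbb{Q}$, $L=K(\alpha,\beta)$ with $\alpha,\beta$ algebraically independent, and the constant map $H=(\alpha,\beta,\alpha\beta)\in L^3\subseteq L[x]^3$. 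Then $\{f\in K[y]:f(H)=0\}=(y_3-y_1y_2)$, and for $f=y_3-y_1y_2$ one computes
\[
f\big((1-t)H+ta\big)=t\big[(1-t)(\alpha-a_1)(\beta-a_2)+(a_3-a_1a_2)\big],
\]
so $a=(\alpha,0,0)$ and $b=(0,\beta,0)$ are image apices of $H$ over $K$, but $\tfrac12(a+b)=(\alpha/2,\beta/2,0)$ is not, since $f\big((1-t)H+\tfrac t2(a+b)\big)=-\tfrac14\alpha\beta\,t^2\ne0$. Thus part~(i) fails; parts (ii)--(iv) fail for the same $H$ with analogous choices (e.g.\ $a-b=(\alpha,-\beta,0)$ has $(a-b)_1(a-b)_2\neq0$, violating the projective-apex condition).

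The paper's own proof is silent on exactly this point --- it cites $(1)\Rightarrow(3)$ of Lemma~\ref{tulem} without ever verifying the second clause of~(1) --- so it has the same gap. The argument \emph{does} go through under either of two extra hypotheses: if the first vector lies in $K^m$ (then $K(a)=K$ and there is nothing to enlarge), or if $H\in K[x]^m$ (then Lemma~\ref{fLK} lets one split any relation over $K(a)\subseteq L$ into relations over $K$, so an apex over $K$ is automatically an apex over every intermediate field between $K$ and $L$). One of these is presumably intended; without it the theorem is false and neither your argument nor the paper's can be completed.
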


\begin{proof}
Take $f \in K[y]$ arbitrary, such that $f(H) = 0$. Suppose that $a$, $b$, $p$ and $q$ are as indicated. Then $f\big((1-t)H + ta\big) = 0$ in (i), (ii) and (iii). Furthermore, $f\big(H + tp\big) = 0$ in (iii) and (iv).
\begin{enumerate}[\upshape (i)]

\item From (1) $\Rightarrow$ (3) of lemma \ref{tulem}, it follows that
$f\big((1-t-u)H + ta + ub\big) = 0$. Now substitute $t = (1-\lambda)t$ 
and $u = \lambda t$ in the given order.

\item From (1) $\Rightarrow$ (3) of lemma \ref{tulem}, it follows that
$f\big((1-t-u)H + ta + ub\big) = 0$. Now substitute $t = \lambda t$ and 
$u = -\lambda t$ in the given order.

\item From (1) $\Rightarrow$ (3) of lemma \ref{tulem}, it follows that
$f\big((1-u)H + tp + ua\big) = 0$. Now substitute $t = \lambda u$.

\item From (1) $\Rightarrow$ (3) of lemma \ref{tulem}, it follows that
$f\big(H + tp + uq\big) = 0$. Now substitute $t = \lambda t$ and 
$u = \mu t$ in the given order. \qedhere

\end{enumerate}
\end{proof}

\begin{lemma} \label{upia}
Let $H \in L[x]^m$ and $i \le m$. Then $e_i$ is a projective image apex 
of $H$ over $K$, if and only if $H_i$ is algebraically independent over 
$K$ of $H_1,H_2,\ldots,H_{i-1},\allowbreak H_{i+1},H_{i+2},\ldots,H_m$.
\end{lemma}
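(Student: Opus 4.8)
The plan is to reduce this to Proposition~\ref{algdepprop}(ii) by connecting the projective image apex $e_i$ with a transcendence-degree statement about $H$ itself. First I would observe that $H + t e_i$ is simply $H$ with its $i$-th coordinate $H_i$ replaced by $H_i + t$, while all other coordinates are unchanged. So the field $K(H + t e_i)$ is $K(H_1,\ldots,H_{i-1},H_i+t,H_{i+1},\ldots,H_m)$, and over this field adjoining $t$ is the same as adjoining $H_i$; hence $K(H+te_i, t) = K(H, t)$, and in particular $\trdeg_K K(H+te_i,t) = \trdeg_K K(H) + 1$ exactly when $t$ is transcendental over $K(H)$ — which it is, since $t$ is an independent variable.

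Next I would apply Proposition~\ref{algdepprop}(ii): $e_i$ is a projective image apex of $H$ over $K$ if and only if $t$ is algebraically independent over $K$ of $H + t e_i$, i.e.\ $\trdeg_K K(H + t e_i) = \trdeg_K K(H+te_i,t) - 1 = \trdeg_K K(H)$. So it remains to show that this equality of transcendence degrees is equivalent to $H_i$ being algebraically independent over $K$ of the other $H_j$. For the key computation, I would compare $\trdeg_K K(H_1,\ldots,\widehat{H_i},\ldots,H_m)$ with $\trdeg_K K(H_1,\ldots,H_{i-1},H_i+t,H_{i+1},\ldots,H_m)$: adjoining the single element $H_i + t$ (with $t$ a fresh transcendental) to $K(H_1,\ldots,\widehat{H_i},\ldots,H_m)$ raises the transcendence degree by exactly one, since $t$ alone is already transcendental over $K$ of everything in sight. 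Thus
\[
\trdeg_K K(H+te_i) = \trdeg_K K\big(H_1,\ldots,\widehat{H_i},\ldots,H_m\big) + 1.
\]
Comparing with $\trdeg_K K(H) = \trdeg_K K(H_1,\ldots,\widehat{H_i},\ldots,H_m) + \varepsilon$, where $\varepsilon \in \{0,1\}$ is $1$ precisely when $H_i$ is algebraically independent over $K$ of the remaining coordinates, we get that $\trdeg_K K(H+te_i) = \trdeg_K K(H)$ holds iff $\varepsilon = 1$, which is the desired statement.

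The only delicate point is the bookkeeping with the auxiliary variable $t$ and making sure all transcendence-degree additivity steps are applied to a single new element over a fixed ground field — there is no real obstacle here, just the need to state carefully which field each quantity is measured over, so that Proposition~\ref{algdepprop}(ii) can be invoked cleanly. One should also note that $e_i \neq 0$ is automatic, so the nonvanishing requirement in the definition of projective image apex is harmless.
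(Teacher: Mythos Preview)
Your proposal is correct and follows essentially the same route as the paper: both reduce to Proposition~\ref{algdepprop}(ii) and then compute $\trdeg_K K(H+te_i) = \trdeg_K K\big(H_1,\ldots,\widehat{H_i},\ldots,H_m\big)+1$. The only cosmetic difference is that the paper obtains this last equality via the invertible substitution $t \mapsto t + H_i$ (a $K$-automorphism of $K(x,t)$ sending $H_i+t$ to $t$), whereas you argue directly that $H_i+t$ is transcendental over $K\big(H_1,\ldots,\widehat{H_i},\ldots,H_m\big)$; these are the same observation, and your treatment of both directions simultaneously via the $\varepsilon\in\{0,1\}$ bookkeeping is perfectly fine.
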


\begin{proof}
We only prove the forward implication, since the converse is similar.
Assume that $e_i$ is a projective image apex of $H$.
From (ii) of proposition \ref{trdegprop} and the fact that the substitution 
$t = t + H_i$ has an inverse, it follows that
\begin{align*}
\trdeg_K K(H) &= \trdeg_K K(H+t e_i) \\
&= \trdeg_K K(H_1,H_2,\ldots,H_{i-1},t,H_{i+1},H_{i+2},\ldots,H_m) \\
&= \trdeg_K K(H_1,H_2,\ldots,H_{i-1},\hphantom{t,{}}H_{i+1},H_{i+2},\ldots,H_m) + 1 
\end{align*}
So $H_i$ is algebraically independent over $K$ of 
$H_1,H_2,\ldots,H_{i-1},H_{i+1},H_{i+2},\ldots,\allowbreak H_m$ indeed.
\end{proof}

\begin{proposition} \label{nopia}
Let $H \in L[x]^m$, such that $e_1, e_2, \ldots, e_s$ are projective image 
apices of $H$ over $K$.
\begin{enumerate}[\upshape (i)]

\item $a \in L^m$ is an image apex of $H$ over $K$, if and only if 
$(a_{s+1},a_{s+2},\ldots,a_m)$ is an image apex of 
$(H_{s+1},H_{s+2},\ldots,H_m)$ over $K$.

\item If $p \in L^m$ such that $p_i \ne 0$ for some $i > s$, 
then $p$ is a projective image apex of $H$ over $K$, if and only if 
$(p_{s+1}, p_{s+2}, \ldots, p_m)$ is a projective image apex of 
$(H_{s+1},H_{s+2},\ldots,H_m)$ over $K$.

\item $\trdeg_K K(H) = \trdeg_K K(H_{s+1},H_{s+2},\ldots,H_m) + s$.

\end{enumerate}
\end{proposition}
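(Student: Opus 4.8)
The plan is to reduce all three parts to the case $s = 1$ and then induct on $s$, truncating the first coordinate at each step. Throughout I will use Proposition \ref{algdepprop} to rephrase ``(projective) image apex of $H$ over $K$'' as the equality $\trdeg_K K((1-t)H + ta) = \trdeg_K K(H)$ (resp. $\trdeg_K K(H + tp) = \trdeg_K K(H)$), together with the identities $\trdeg_K K((1-t)H+ta,t) = \trdeg_K K(H,t) = \trdeg_K K(H)+1$ and $\trdeg_K K(H+tp,t) = \trdeg_K K(H)+1$ established in the proof of that proposition, and the analogous identities for the truncated map $(H_2,\ldots,H_m)$.

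\emph{Case $s = 1$.} Part (iii) is immediate from Lemma \ref{upia}: since $e_1$ is a projective image apex of $H$ over $K$, $H_1$ is algebraically independent over $K$ of $H_2,\ldots,H_m$, i.e.\ $\trdeg_K K(H) = \trdeg_K K(H_2,\ldots,H_m)+1$. For (i) (part (ii) is the same argument with $(1-t)H+ta$ replaced by $H+tp$, the hypothesis on $p$ serving only to make $(p_2,\ldots,p_m)$ a legitimate candidate for a projective image apex): specializing $t=0$ shows $\trdeg_K K((1-t)H+ta)\ge\trdeg_K K(H)$, and adjoining $t$ shows $\trdeg_K K((1-t)H+ta)\le\trdeg_K K(H)+1$, with the same two-sided bounds for $(1-t)H_2+ta_2,\ldots,(1-t)H_m+ta_m$ relative to $\trdeg_K K(H_2,\ldots,H_m)$. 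If $a$ is an image apex of $H$ over $K$ but $(a_2,\ldots,a_m)$ is not an image apex of $(H_2,\ldots,H_m)$ over $K$, then $\trdeg_K K((1-t)H_2+ta_2,\ldots,(1-t)H_m+ta_m)$ attains its upper bound, so $t$ is algebraic over $K((1-t)H_2+ta_2,\ldots,(1-t)H_m+ta_m)$, hence over the larger field $K((1-t)H+ta)$, forcing $\trdeg_K K((1-t)H+ta)=\trdeg_K K(H)+1$, a contradiction. Conversely, if $(a_2,\ldots,a_m)$ is an image apex of $(H_2,\ldots,H_m)$ over $K$, then by part (iii) for $s=1$,
\begin{align*}
\trdeg_K K((1-t)H+ta) &\le \trdeg_K K\big((1-t)H_2+ta_2,\ldots,(1-t)H_m+ta_m\big)+1 \\
&= \trdeg_K K(H_2,\ldots,H_m)+1 = \trdeg_K K(H),
\end{align*}
and together with the lower bound this is an equality, so $a$ is an image apex of $H$ over $K$.

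\emph{Induction on $s$.} The cases $s=0$ (trivial) and $s=1$ (above) start the induction. Let $s\ge 2$ and set $\tilde H := (H_2,\ldots,H_m)\in L[x]^{m-1}$. Applying the case $s=1$ of part (ii) to each $e_j$ with $2\le j\le s$ — legitimate since $(e_j)_j=1\ne 0$ with $j>1$ — shows that $e_1,\ldots,e_{s-1}\in L^{m-1}$ are projective image apices of $\tilde H$ over $K$. Now each of (i), (ii), (iii) follows by chaining the case $s=1$ applied to $H$ (which passes from a statement about $H$, $a$, $p$ to the corresponding statement about $\tilde H$, $(a_2,\ldots,a_m)$, $(p_2,\ldots,p_m)$) with the induction hypothesis for $s-1$ applied to $\tilde H$ (which passes from $\tilde H$ to $(H_{s+1},\ldots,H_m)$), using that the $k$-th coordinate of $\tilde H$ is $H_{k+1}$; for part (ii) one also checks that $(p_2,\ldots,p_m)$ retains a nonzero coordinate beyond position $s-1$, and for part (iii) the chain reads $\trdeg_K K(H)=\trdeg_K K(\tilde H)+1=\trdeg_K K(H_{s+1},\ldots,H_m)+(s-1)+1$.

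I expect the only mildly delicate points to be the transcendence-degree bookkeeping in the presence of $L$-coefficients (handled exactly as in the proof of Proposition \ref{algdepprop}) and the two converse arguments in the case $s=1$, whose crux is the elementary observation that a failure of the apex property for the truncated map makes $t$ algebraic over a subfield — hence over the whole field — in contradiction with the apex property assumed for $H$. Everything else is routine.
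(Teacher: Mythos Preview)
Your proof is correct and takes a genuinely different route from the paper in the key step, the case $s=1$ of (i) and (ii). The paper obtains the ``only if'' direction directly from Proposition~\ref{apexmap} (projecting an apex through the truncation map), and for the ``if'' direction it builds $(0,\tilde q)$ up to an apex of $H$ by first showing it is an apex of $(t,\tilde H)$ via Corollary~\ref{tucor}, then transporting this to $(H_1+t,\tilde H)$ by the substitution $t\mapsto t\pm H_1$, and finally descending to $H$ using that $e_1$ is a projective image apex together with Theorem~\ref{piacalc}. You instead argue entirely through Proposition~\ref{algdepprop}: a failure on the truncated side makes $t$ algebraic over a subfield of $K((1-t)H+ta)$, contradicting the apex property for $H$; and in the converse you sandwich $\trdeg_K K((1-t)H+ta)$ between $\trdeg_K K(H)$ (specialization $t=0$) and $\trdeg_K K(H_2,\ldots,H_m)+1=\trdeg_K K(H)$ (part (iii) for $s=1$). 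Your argument is shorter and avoids the apex calculus of Corollary~\ref{tucor} and Theorem~\ref{piacalc}; the paper's approach, by contrast, keeps the reasoning at the level of relations $f(H)=0$ and would go through without the transcendence-degree reformulation. The induction step and the verification that $e_1,\ldots,e_{s-1}$ are projective image apices of $\tilde H$ are handled the same way in both proofs.
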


\begin{proof}
We may assume that $s=1$, because the general case follows by induction.
The `only if'-parts of (i) and (ii) follow from proposition \ref{apexmap}. 
To prove the `if'-parts, suppose that $\tilde{q} := (q_2,q_3,\ldots,q_m)$ 
is a (projective) image apex of $\tilde{H} := (H_2,H_3,\ldots,H_m)$. 

Using proposition \ref{apexmap} and (iv) of corollary \ref{tucor}, 
we deduce that $(0,\tilde{q})$ is a (projective) image apex of 
$(0,\tilde{H})$ and $(t,\tilde{H})$ respectively over $K$. 
By substituting $t = t - H_1$, $t = t + H_1 + u q_1$, and $t = t + H_1 + (u-1)^{-1}u q_1$, we see that
\begin{align*}
f\big(t,\tilde{H}\big) = 0 
&\Longleftarrow f\big(H_1+t,\tilde{H}\big) = 0 \\
f\big(t,\tilde{H}+u\tilde{q}\big) = 0 
&\Longrightarrow f\big(H_1+t+uq_1,\tilde{H}+u\tilde{q}\big) = 0 \\
f\big((1-u)t,(1-u)\tilde{H}+u\tilde{q}\big) = 0 
&\Longrightarrow f\big((1-u)(H_1+t)+uq_1,(1-u)\tilde{H}+u\tilde{q}\big) = 0 
\end{align*}
Consequently, $(q_1,\tilde{q})$ is a (projective) image apex of $(H_1+t,\tilde{H})$ over $K$. Since $e_1$ is a projective image apex of $H$ over $K$, we deduce from (iv) of corollary \ref{tucor} that $(q_1,\tilde{q})$ is a (projective) image apex of $H$ over $K$.
This proves the `if'-parts of (i) and (ii).

(iii) follows from the case where $i = 1$ of (the proof of) lemma \ref{upia}.
\end{proof}

\begin{corollary} \label{plan}
Suppose that $K$ is algebraically closed in $L$. Let $H \in L[x]^m$ 
and define $r := \trdeg_K K(H)$. Say that $H$ has exactly $s$ 
independent projective image apices $p \in K^m$ over $K$. 
Then the following statements are equivalent:
\begin{enumerate}[\upshape (1)]

\item $r \le s$;

\item $r-1 \le s \le r$ and $H$ has an image apex $a \in K^m$ over $K$;

\item the ideal $(f \in K[y]\,|\,f(H) = 0)$ is generated by polynomials
of degree at most one.

\end{enumerate}
Furthermore, if any of {\upshape (1)}, {\upshape (2)}, {\upshape (3)} is
satisfied, then the constant part of $H$ is an image apex of $H$ over $K$.
\end{corollary}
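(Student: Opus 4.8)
The plan is to normalize by a linear change on the target so that the $s$ projective image apices become $e_1,\ldots,e_s$, extract the unconditional inequality $s\le r$, then prove $(1)\Leftrightarrow(3)$ and $(1)\Leftrightarrow(2)$, and read the ``furthermore'' clause off $(3)$. \textbf{Normalization.} By (iv) of Theorem~\ref{piacalc} the projective image apices of $H$ over $K$ lying in $K^m$, together with $0$, form a $K$-subspace, of dimension $s$. Pick $B\in\GL_m(K)$ carrying it onto $\langle e_1,\ldots,e_s\rangle$ and replace $H$ by $BH$: by Proposition~\ref{apexmap} (used for $B$ and $B^{-1}$) this preserves $s$, $r$, each of (1),(2),(3), and — $B$ being linear — whether the constant part is an image apex. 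So I may assume $e_1,\ldots,e_s$ are projective image apices of $H$ over $K$ spanning all of them. Then (iii) of Proposition~\ref{nopia} gives $r=\trdeg_K K(H_{s+1},\ldots,H_m)+s$, so $s\le r$ always, $(1)$ becomes $\trdeg_K K(H_{s+1},\ldots,H_m)=0$, and (granted $s\le r$) the clause $r-1\le s\le r$ in $(2)$ reduces to $\trdeg_K K(H_{s+1},\ldots,H_m)\le1$.

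\textbf{$(1)\Leftrightarrow(3)$ and the ``furthermore''.} If $(1)$ holds, each $H_i$ with $i>s$ is algebraic over $K$, hence — a polynomial over $L$ with $K$ algebraically closed in $L$, hence in $L(x)$ — a constant $c_i\in K$; by Lemma~\ref{upia}, $H_1,\ldots,H_s$ are algebraically independent over $K$, so $K[y_1,\ldots,y_s]\cong K[y]/(y_{s+1}-c_{s+1},\ldots,y_m-c_m)\twoheadrightarrow K[H]$ is injective and $\bigl(f\in K[y]\mid f(H)=0\bigr)=(y_{s+1}-c_{s+1},\ldots,y_m-c_m)$, which is $(3)$. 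Conversely, if $I:=\bigl(f\in K[y]\mid f(H)=0\bigr)$ is generated by affine-linear forms $\ell_1,\ldots,\ell_k$, row-reducing them over $K$ (legitimate since $1\notin I$) exhibits $K[y]/I=K[H]$ as a polynomial ring $K[\theta_1,\ldots,\theta_r]$ (one solves for $m-r$ variables, $r=\trdeg_K K(H)$) in which $H=M\theta+v$ with $M\in\Mat_{m,r}(K)$ of rank $r$ (else the $H_i$ would not generate $K[\theta]$); for $q\in K^r$, $H+t(Mq)=M(\theta+tq)+v$ has the same field over $K$ as $\theta_1+tq_1,\ldots,\theta_r+tq_r$ (as $M$ has a left inverse), of transcendence degree $r$ (adjoining $t$ recovers $K(\theta,t)$, of transcendence degree $r+1$), so by (ii) of Proposition~\ref{algdepprop}, $Mq$ is a projective image apex of $H$ over $K$ whenever $Mq\ne0$; hence $H$ has $r$ independent ones in $K^m$, forcing $s\ge r$ and $(1)$. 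Finally, arguing from $(3)$: write $\ell_j=\ell_j^0+c_j$ with $\ell_j^0$ homogeneous linear; evaluating $\ell_j(H)=0$ at $x=0$ gives $\ell_j(H(0))=0$, whence $\ell_j\bigl((1-t)H+tH(0)\bigr)=(1-t)\ell_j(H)+t\ell_j(H(0))=0$ for all $j$, so $f\bigl((1-t)H+tH(0)\bigr)=0$ for all $f\in I$: the constant part $H(0)$ is an image apex of $H$ over $K$.

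\textbf{$(1)\Leftrightarrow(2)$.} From $(1)$ (so $H=(H_1,\ldots,H_s,c_{s+1},\ldots,c_m)$ as above), substitution against the generators $y_i-c_i$ shows $(0,\ldots,0,c_{s+1},\ldots,c_m)\in K^m$ is an image apex, which with $s\le r=s$ gives $(2)$. For $(2)\Rightarrow(1)$: let $a\in K^m$ be an image apex; by (i) of Proposition~\ref{nopia}, $(a_{s+1},\ldots,a_m)$ is an image apex of $G:=(H_{s+1},\ldots,H_m)$, and subtracting it (Proposition~\ref{apexmap}) I may assume $0$ is an image apex of $G$, i.e.\ $I(G):=\bigl(f\in K[y_{s+1},\ldots,y_m]\mid f(G)=0\bigr)$ is homogeneous (expand $f((1-t)G)=0$ in powers of $t$); moreover $G$ has no projective image apex in $K^{m-s}$ over $K$ (else (ii) of Proposition~\ref{nopia} gives one for $H$ outside $\langle e_1,\ldots,e_s\rangle$), and $\trdeg_K K(G)\le1$. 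Assume $\trdeg_K K(G)=1$ and pick a nonzero entry $G_{i_0}$; by homogeneity $G_{i_0}$ is not a nonzero constant, so it is transcendental over $K$ and every $G_i$ is algebraic over $K(G_{i_0})$. The kernel of the graded map $K[u,v]\to K[G]$, $u\mapsto G_{i_0}$, $v\mapsto G_i$, is a nonzero homogeneous prime $(Q_i)$; writing $Q_i(u,v)=u^e\bar q_i(v/u)$ with $\bar q_i\in K[w]$ nonzero, $\bar q_i(G_i/G_{i_0})=0$ forces $G_i/G_{i_0}\in L(x)$ to be algebraic over $K$, hence $G_i=c_iG_{i_0}$ with $c_i\in K$. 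So $G=\gamma G_{i_0}$ with $0\ne\gamma\in K^{m-s}$, whence $G+t\gamma=\gamma(G_{i_0}+t)$ and $K(G+t\gamma)=K(G_{i_0}+t)$ has transcendence degree $1$, making $\gamma$ a projective image apex of $G$ — contradiction; so $\trdeg_K K(G)=0$, i.e.\ $(1)$.

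\textbf{Main obstacle.} The delicate direction is $(2)\Rightarrow(1)$, specifically the structural fact that a homogeneous prime $\p\subset K[y_{s+1},\ldots,y_m]$ with one-dimensional quotient and $K$ algebraically closed in its residue field cuts out a line through the origin — i.e.\ the degree-one generators $G_{s+1},\ldots,G_m$ of the quotient are all proportional. I handle it via the homogeneous two-variable relation above, using that $L[x]$ is a domain with $K$ algebraically closed in $L(x)$. The rest is bookkeeping: tracking which field each quantity lives in, and the precise effect of the normalizing $B$ and of the translation in $(2)\Rightarrow(1)$ on the notions of (projective) image apex.
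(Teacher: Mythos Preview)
Your proof is correct. The overall architecture matches the paper's: normalize so that $e_1,\ldots,e_s$ are projective image apices, use (iii) of Proposition~\ref{nopia} to get $s\le r$ and reduce everything to $G=(H_{s+1},\ldots,H_m)$, and extract projective image apices from the linear generators for $(3)\Rightarrow(1)$.

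The one genuine difference is how you handle the $(2)\Rightarrow$ direction. The paper proves $(2)\Rightarrow(3)$: given $f\in K[y_{s+1},y_{s+2}]$ with $f(H)=0$, it substitutes $t=1-(H_{s+1}-a_{s+1})^{-1}$ into $f\bigl((1-t)(H-a)+a\bigr)=0$ to obtain $f\bigl(\tfrac{H-a}{H_{s+1}-a_{s+1}}+a\bigr)=0$, forcing $(H_{s+2}-a_{s+2})/(H_{s+1}-a_{s+1})\in K$ and hence pairwise $K$-proportionality of the last $m-s$ components of $H-a$. You prove $(2)\Rightarrow(1)$ instead: translating so that $0$ is an image apex of $G$ makes $I(G)$ homogeneous, so the pairwise relation ideal $\ker\bigl(K[u,v]\to K[G_{i_0},G_i]\bigr)$ is a nonzero homogeneous prime $(Q_i)$, and dehomogenizing $Q_i(G_{i_0},G_i)=0$ gives $G_i/G_{i_0}$ algebraic over $K$, hence in $K$. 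The two arguments are really the same phenomenon---$K$-proportionality via algebraic closedness of $K$ in $L(x)$---seen through different lenses: your route makes the homogeneity of the relation ideal explicit, while the paper's substitution trick is more ad hoc but shorter. Your packaging also gives the ``furthermore'' clause more transparently, by reading it straight off $(3)$ rather than via (i) of Proposition~\ref{nopia} as the paper does.

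One cosmetic point: your appeal to Lemma~\ref{upia} for the algebraic independence of $H_1,\ldots,H_s$ in $(1)\Rightarrow(3)$ is slightly indirect (the lemma gives independence of each $H_i$ from the others, not joint independence); the cleaner justification is the one you already have available, namely $\trdeg_K K(H_1,\ldots,H_s)=\trdeg_K K(H)=s$ from (iii) of Proposition~\ref{nopia}.
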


\begin{proof}
Notice that $K$ is algebraically closed in $L(x)$ as well as $L$.
We may assume that $e_1, e_2, \ldots, e_s$ are projective image apices
of $H$. Then $(f \in K[y]\,|\,f(H) = 0)$ is generated by polynomials
in $K[y_{s+1},y_{s+2},\ldots,y_m]$. From proposition \ref{nopia}, it 
follows that we may assume that $s = 0$.
\begin{description}

\item[(1) \imp (2), (3)]
Assume (1). Then $0 \le r \le s = 0$, so $r = 0$ and $r-1 \le s \le r$.
Since $r = 0$, it follows that every component of $H$ is algebraic
over $K$. As $K$ is algebraically closed in $L(x)$, we deduce that
$a := H \in K^m$. So $H = (1-t)H + ta$ and $a$ is an image apex of $H$ 
over $K$. Furthermore, $\big(f \in K[y]\,\big|\,f(H) = 0\big) = (y_1 - a_1, y_2 - a_2,
\ldots, y_m - a_m)$.

\item[(2) \imp (3)]
Assume (2). If $r = m$, then $\big(f \in K[y]\,\big|\,f(H) = 0\big) = (0)$, so assume that $r < m$.
If $r = 0$, then (1) is satisfied and (3) follows, so assume that $r > 0$.
Then $r \le s + 1 = 1$, so $r = 1$ and $m > 1$. Consequently, 
$\trdeg_K K(H_1,H_2) \le 1$, so there exists
a nonzero $f \in K[y_1,y_2]$ such that $f(H) = 0$. We show that we can take $f$ of degree $1$. If $H_1 - a_1 \in \{0,1\}$, then this is trivially the case, so assume the opposite. 

Since $H_1 - a_1 \ne 1$, we may assume that $y_1 - a_1 - 1 \nmid f$. Since $a$ is an image apex of $H$ over $K$, it follows that
$$
f\big((1-t) H + t a\big) = 0 \qquad \mbox{and} \qquad f\big((1-t)(H-a) + a\big) = 0
$$
As $H_1 - a_1 \ne 0$, we can substitute $t = 1 - (H_1 - a_1)^{-1}$, to obtain
$$
f\Big(1 + a_1, \frac{H_2-a_2}{H_1-a_1} + a_2\Big) = f\Big(\frac{H-a}{H_1 - a_1} + a\Big) = 0
$$
As $y_1 - a_1 - 1 \nmid f$ and $K$ is algebraically closed in $L(x)$, we infer that
$$
\frac{H_2-a_2}{H_1-a_1} \in K
$$
More generally, we can deduce that the components of $H$ are affinely linearly dependent over $K$ in pairs. This yields $m - 1 = m - r$ independent polynomials of degree $1$ in $\big(f \in K[y] \,\big|\, f(H) = 0\big)$, so these are a set of generators, and (3) follows.

\item[(3) \imp (1)] 
Assume (3). Then $\big(f \in K[y]\,\big|\,f(H) = 0\big)$ is generated by $m - r$
polynomials of degree at most one. There are $r$ independent vectors
$p$ which vanish on the linear parts of these $m - r$
polynomials of degree at most one. A straightforward computation shows
that these $r$ independent vectors $p$ are projective image apices 
of $H$ over $K$, which gives (1). 

\end{description}
The last claim follows from the proof of (1) $\Rightarrow$ (2), (3) and
(i) of proposition \ref{nopia}, because $H(0) - a$ is an $L$-linear 
combination of $e_1, e_2, \ldots, e_s$.
\end{proof}

\begin{proposition} \label{hmgapexred}
Let $H \in L[x]^m$. Then the zero vector is an image apex of $(tH,t)$
over $K$, and the following holds.
\begin{enumerate}[\upshape (i)]
 
\item $a$ is an image apex of $H$ over $K$, if and only if 
$(a,1)$ is a (projective) image apex of $(tH,t)$ over $K$.

\item $p$ is a projective image apex of $H$ over $K$, if and only if 
$(p,0)$ is a (projective) image apex of $(tH,t)$ over $K$ 
and $p \ne 0$.

\end{enumerate}
\end{proposition}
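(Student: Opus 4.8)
The plan is to dispose of the preliminary assertion by a one‑line substitution, to reduce the parenthetical "(projective)" and the clause $p\neq0$ to Theorem~\ref{piacalc}, and then to prove the two equivalences by translating each side into a transcendence‑degree identity via Proposition~\ref{algdepprop} and comparing. Throughout, write $\tilde H=(tH,t)$, viewed as a polynomial map in the variables $x$ and $t$, and let $s$ be a fresh variable in the role of the apex parameter (so the apex conditions for $\tilde H$ are to be read with $s$ in place of $t$). If $f\in K[y_1,\ldots,y_{m+1}]$ with $f(\tilde H)=0$, then $f(tH_1,\ldots,tH_m,t)=0$ identically, and substituting $(1-s)t$ for $t$ gives $f((1-s)\tilde H)=0$; hence $0$ is an image apex of $\tilde H$ over $K$. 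Applying Theorem~\ref{piacalc}(ii) with the image apices $0$ and a nonzero image apex of $\tilde H$, and Theorem~\ref{piacalc}(iii) with image apex $0$, shows that for $\tilde H$ the nonzero image apices over $K$ coincide with the projective image apices over $K$. Since $(a,1)\neq0$ always and $(p,0)\neq0$ exactly when $p\neq0$, this takes care of the "(projective)" in both parts and of the clause $p\neq0$ in (ii); it remains to prove (i) with "projective image apex of $\tilde H$" and (ii) with "projective image apex of $\tilde H$, $p\neq0$".

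For (i): by Proposition~\ref{algdepprop}(i) (apex variable $s$), $a$ is an image apex of $H$ over $K$ iff $\trdeg_K K((1-s)H+sa)=\trdeg_K K(H)=:r$, while by Proposition~\ref{algdepprop}(ii), $(a,1)$ is a projective image apex of $\tilde H$ over $K$ iff $\trdeg_K K(\tilde H+s(a,1))=\trdeg_K K(\tilde H)$, and $\trdeg_K K(\tilde H)=r+1$ since $K(tH,t)=K(H)(t)$ with $t$ transcendental over $K(H)$. Now $\tilde H+s(a,1)=(tH+sa,\,t+s)$ and $tH_i+sa_i=(t+s)((1-\lambda)H_i+\lambda a_i)$ with $\lambda:=s/(t+s)$, so $K(\tilde H+s(a,1))=K((1-\lambda)H+\lambda a)(t+s)$. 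As $\{t+s,\lambda\}$ is a transcendence basis of $L(x,t,s)$ over $L(x)$ (one recovers $s=\lambda(t+s)$ and $t$ from $t+s$ and $\lambda$), the element $t+s$ is transcendental over $L(x)(\lambda)\supseteq K((1-\lambda)H+\lambda a)$, so adjoining it raises $\trdeg_K$ by exactly $1$; and the $L(x)$‑isomorphism $s\mapsto\lambda$ identifies $K((1-s)H+sa)$ with $K((1-\lambda)H+\lambda a)$. Chaining these identities yields (i).

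For (ii): analogously, $(p,0)$ is a projective image apex of $\tilde H$ over $K$ iff $\trdeg_K K(\tilde H+s(p,0))=r+1$, and $\tilde H+s(p,0)=(tH+sp,\,t)$ with $tH_i+sp_i=t(H_i+\mu p_i)$ for $\mu:=s/t$, so $K(\tilde H+s(p,0))=K(H+\mu p)(t)$; here $\{t,\mu\}$ is a transcendence basis of $L(x,t,s)$ over $L(x)$, so adjoining $t$ to $K(H+\mu p)$ costs exactly $1$, and $s\mapsto\mu$ identifies $K(H+sp)$ with $K(H+\mu p)$. Hence $(p,0)$ is a projective image apex of $\tilde H$ iff $\trdeg_K K(H+sp)=r$, which by Proposition~\ref{algdepprop}(ii) holds iff $p$ is a projective image apex of $H$ over $K$.

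The only genuine obstacle is bookkeeping: one must verify that the rational reparametrizations $s=\lambda(t+s)$ and $s=\mu t$ really are changes of variables over $L(x)$ — equivalently that $\{t+s,\lambda\}$ and $\{t,\mu\}$ are transcendence bases of $L(x,t,s)$ over $L(x)$ — so that each adjunction of the "free" variable increases $\trdeg_K$ by exactly one and no hidden algebraic relation is introduced; everything else is formal. Alternatively one can bypass transcendence degrees and argue directly with the vanishing ideals, passing between $f\in K[y_1,\ldots,y_m]$ and its homogenization $f^h\in K[y_1,\ldots,y_{m+1}]$ via $f^h(\tilde H)=t^{\deg f}f(H)$ and back via dehomogenization $F\mapsto F(y_1,\ldots,y_m,1)$; the substitutions $\lambda=s/(t+s)$ and $\mu=s/t$ reappear in exactly the same places.
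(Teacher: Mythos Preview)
Your proof is correct. The preliminary assertion and the reduction of the parenthetical ``(projective)'' via Theorem~\ref{piacalc} match the paper's treatment exactly; where you diverge is in the core of (i) and (ii). The paper argues directly with the vanishing ideals: it establishes the single equivalence
\[
f\big((1-q_{m+1}u)H+uq\big)=0 \iff g\big(tH+uq,\,t+uq_{m+1}\big)=0,
\]
where $g=y_{m+1}^{i}f(y_{m+1}^{-1}y)$ is the homogenization of $f$, obtained via the substitution $u\mapsto u/(t+q_{m+1}u)$ and its inverse, and then specializes $q_{m+1}\in\{0,1\}$ to get both parts at once. You instead convert each side into a transcendence-degree statement via Proposition~\ref{algdepprop} and compare them through the rational reparametrizations $\lambda=s/(t+s)$ and $\mu=s/t$; the algebraic independence of $\{t+s,\lambda\}$ (resp.\ $\{t,\mu\}$) over $L(x)$ is exactly what makes the bookkeeping go through. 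The substitution you use is the same one that drives the paper's computation, so the two arguments are close cousins; your route has the advantage of being entirely numerical (no need to split a general $g(tH,t)$ into homogeneous pieces as the paper does for the converse direction), while the paper's route is more explicit about the passage between $K[y_1,\ldots,y_m]$ and $K[y_1,\ldots,y_{m+1}]$ and handles (i) and (ii) in a single stroke. You in fact sketch the paper's approach in your final paragraph.
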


\begin{proof}
From (i) of corollary \ref{tucor} and proposition \ref{relprop}, 
we deduce that the zero vector is an image apex of 
$\big((1-t)H,(1-t)\big)$ and $(tH,t)$ respectively over $K$.
Hence it follows from (ii) and (iii) of theorem \ref{piacalc} 
that $(q,q_{m+1})$ is a nonzero image apex of $(tH,t)$ over $K$, 
if and only if $(q,q_{m+1})$ is a projective image apex of $(tH,t)$ 
over $K$. So it remains to prove (i) and (ii), where we may ignore the 
parentheses around `projective' in (i) and (ii).

Notice that
\begin{align*}
f\big((1-q_{m+1}u)H+uq\big)\big|_{u=u/(t+q_{m+1}u)} 
&= f\bigg(\frac{\big((t+q_{m+1}u)-q_{m+1}u\big)H + uq}{t+q_{m+1}u}\bigg) \\
&= f\big((t+uq_{m+1})^{-1}(tH+uq)\big)
\end{align*}
and $f\big((t+uq_{m+1})^{-1}(tH+uq)\big)\big|_{t=1-uq_{m+1}} = f\big((1-q_{m+1}u)H+uq\big)$.
Hence
$$
f\big((1-q_{m+1}u)H+uq\big) = 0 \Longleftrightarrow f\big((t+uq_{m+1})^{-1}(tH+uq)\big) = 0
$$
So if $g = y_{m+1}^i f (y_{m+1}^{-1}y)$ for some $i \ge \deg f$, then 
\begin{equation} \label{fghmg}
f\big((1-q_{m+1}u)H+uq\big) = 0 \Longleftrightarrow g(tH+uq,t+uq_{m+1}) = 0
\end{equation}
If we substitute $u = 0$ on both sides of \eqref{fghmg}, then we see that 
$f(H) = 0 \Longleftrightarrow g(tH,t) = 0$. Using that and \eqref{fghmg} itself,
the `if'-parts of (i) and (ii) follow by taking $q_{m+1} = 1$ and $q_{m+1} = 0$ 
respectively.

To prove the `only if'-parts of (i) and (ii) as well, take any $g \in K[y,y_{m+1}]$ 
such that $g(tH,t) = 0$ and let $g_i$ be the coefficient of $t^i$ of $g(ty,ty_{m+1})$. 
Then $g_i$ is homogeneous of degree $i$, so there exists an 
$f_i \in K[y]$ of degree at most $i$ such that $g_i = y_{m+1}^i f_i(y_{m+1}^{-1}y)$.
Furthermore, $g_i(tH,t)$ is $t^i$ times the coefficient of $t^i$ of $g(tH,t)$, which 
is zero. Hence the `only if'-parts of (i) and (ii) can be proved in a similar manner
as the `if'-parts.
\end{proof}

\section{Inheritance}

We need the following proposition to define inheritance.

\begin{proposition} \label{inherprop}
Suppose that $h \in L[x]^m$ and $H \in L[X]^M$ are \HE-paired through matrices
$B \in \Mat_{m,M}(K)$ and $C \in \Mat_{N,n}(L)$. 
\begin{enumerate}[\upshape (i)]

\item
Suppose that $a \in L^m$ is an image apex of $H$ over $K$. Then $B a$ is an image apex of both $h$ and $BH$ over $K$, and $(1-t)H + ta$ is algebraic over $K\big((1-t)BH+tBa\big)$. Furthermore, if $B a' = Ba$, then $a'$ is an image apex of $H$ over $K$, if and only if $a' = a$.

\item
Suppose that $p \in L^m$ is a projective image apex of $H$ over $K$ and $Bp \ne 0$. Then $B p$ is a projective image apex of both $h$ and $BH$ over $K$, and $H + tp$ is algebraic over $K\big(BH+tBp\big)$. Furthermore, if $B p' = Bp$, then $p'$ is a projective image apex of $H$ over $K$, if and only if $p' = p$.

\end{enumerate}
\end{proposition}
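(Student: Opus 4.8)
The plan is to prove (i) and (ii) together, handling in turn the transfer of the apex first to $BH$ and then to $h$, then the algebraicity statement, and finally the uniqueness claim. That $Ba$ is an image apex of $BH$ over $K$, and $Bp$ a projective image apex of $BH$ over $K$, is immediate from Proposition~\ref{apexmap} with $c=0$. The transfer to $h$ cannot go through Proposition~\ref{relprop}, since $Ba$ and $Bp$ need not lie in $K^m$; instead I would argue directly. Let $f\in K[y]$ with $f(h)=0$. By Proposition~\ref{dBphi}, $f(BH)=0$, i.e.\ $g(H)=0$ for $g:=f(BY)\in K[Y]$. As $a$ is an image apex of $H$ over $K$ and $g\in K[Y]$, we get $g\big((1-t)H+ta\big)=0$; but $g\big((1-t)Y+ta\big)=f\big((1-t)BY+tBa\big)$, so the polynomial $f\big((1-t)BH+tBa\big)\in L[X,t]$ vanishes identically, and substituting $X=Cx$ yields $f\big((1-t)h+tBa\big)=0$, as wanted. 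The projective case runs through the same computation with $(1-t)H+ta$ replaced by $H+tp$.

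For the algebraicity statement, each component of $(1-t)BH+tBa$ is a $K$-linear combination of the components of $(1-t)H+ta$, so $K\big((1-t)BH+tBa\big)\subseteq K\big((1-t)H+ta\big)$. By Proposition~\ref{algdepprop}(i), since $a$ and $Ba$ are image apices of $H$ and $BH$ respectively, $\trdeg_K K\big((1-t)H+ta\big)=\trdeg_K K(H)$ and $\trdeg_K K\big((1-t)BH+tBa\big)=\trdeg_K K(BH)$, and these agree by Proposition~\ref{dBphi}. Hence $K\big((1-t)H+ta\big)/K\big((1-t)BH+tBa\big)$ is algebraic, which is the claim; for (ii) one argues identically using Proposition~\ref{algdepprop}(ii).

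For uniqueness, suppose $a'$ is an image apex of $H$ over $K$ with $Ba'=Ba$; the equivalence holds trivially if $a'=a$, so assume $a'\neq a$ and aim for a contradiction. By Theorem~\ref{piacalc}(ii), $q:=a-a'$ is a nonzero projective image apex of $H$ over $K$, and $Bq=Ba-Ba'=0$. The algebraicity statement of (ii) applied to $q$ shows $H+tq$ is algebraic over $K(BH+tBq)=K(BH)\subseteq L(X)$; since $L(X)$ is algebraically closed in $L(X,t)$, every component $H_j+tq_j$ lies in $L(X)$, and as $H_j\in L(X)$ this forces $tq_j\in L(X)$. Choosing $j$ with $q_j\neq0$ gives $t\in L(X)$, contradicting the transcendence of $t$ over $L(X)$. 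So $a'=a$; part (ii) is identical with Theorem~\ref{piacalc}(iv) and $q:=p-p'$.

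The step I expect to be the main obstacle is this transfer of the apex from $H$ to $h$: because $Ba$ and $Bp$ are in general only defined over $L$, Proposition~\ref{relprop} is unavailable, and one is forced to route the argument through $BH$ and the substitution $X=Cx$, exploiting that $f\big((1-t)BH+tBa\big)$ (respectively $f\big(BH+tBp\big)$) is literally the zero polynomial in $L[X,t]$ before $X$ is specialized.
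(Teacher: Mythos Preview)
Your proof is correct. The transfer of the apex to $BH$ (via Proposition~\ref{apexmap}), the transfer to $h$ (via the substitution $X=Cx$ after establishing $f((1-t)BH+tBa)=0$ as a polynomial identity), and the algebraicity claim (via the equality of transcendence degrees from Proposition~\ref{dBphi} and Proposition~\ref{algdepprop}) all match the paper's argument essentially line for line; your remark that Proposition~\ref{relprop} is unavailable because $Ba\notin K^m$ in general is a good observation, and the paper silently works around this in the same way you do.

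The uniqueness argument is where you diverge. The paper applies the algebraicity statement to both $a$ and $a'$, subtracts to get $t(a'-a)$ algebraic over $K\big((1-t)BH+tBa\big)$, and then concludes $a'=a$ by appealing to Proposition~\ref{algdepprop} (i.e.\ to the transcendence of $t$ over that field). You instead invoke Theorem~\ref{piacalc}(ii) to turn $a-a'$ into a projective image apex $q$ with $Bq=0$, so that the base field in the algebraicity statement collapses to $K(BH)\subseteq L(X)$, and then finish with the algebraic closedness of $L(X)$ in $L(X,t)$. Your route has the virtue that eliminating $t$ from the base field makes the final step completely elementary: once $H_j+tq_j\in L(X)$ you read off $q_j=0$ directly. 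The paper's route keeps $t$ in the base field and relies on the transcendence of $t$ over $K((1-t)BH+tBa)$; this is more direct but the last inference (from $t(a'-a)$ algebraic and $t$ transcendental to $a'-a=0$, with $a'-a\in L^M$) is terser and takes a moment to unpack, whereas yours is self-contained.
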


\begin{proof}
We only prove the first claim, because the second claim is similar. 
On account of proposition \ref{apexmap}, $Ba$ is an image apex of $BH$ over $K$.
To prove that $Ba$ is an image apex of $h$ as well, suppose that $f(h) = 0$ for some $f \in K[y]$. Then $f(BH) = 0$ on account of proposition \ref{HEphi}. Since $Ba$ is an is an image apex of $BH$ over $K$, $f\big((1-t)BH + t Ba\big) = 0$ follows. So $f\big((1-t)h + t Ba\big) = 0$ by definition of $h$.

From proposition \ref{trdegprop} and \eqref{HEphieq}, it follows that
\begin{align*}
\trdeg_K K\big((1-t)H + ta\big) &= \trdeg_K K\big(H\big) = \trdeg_K K\big(BH\big) \\
&= \trdeg_K K\big((1-t)BH + t Ba\big)
\end{align*}
Since $K\big((1-t)BH + t Ba\big) \subseteq K\big((1-t)H + ta\big)$, we infer that
$(1-t)H + ta$ is algebraic over $K\big((1-t)BH + t Ba\big)$.

Suppose that $a' \in L^m$ is another image apex of $H$ over $K$, such that $B a' = Ba$. 
Then $(1-t)H + ta'$ is algebraic over $K\big((1-t)BH + t Ba'\big) = K\big((1-t)BH + t Ba\big)$ 
as well. So $t(a'-a) = \big((1-t)H + ta'\big) - \big((1-t)H + ta\big)$ is algebraic over 
$K\big((1-t)BH + t Ba\big)$. Hence $a' = a$ on account of proposition \ref{algdepprop}. 
\end{proof}

\begin{definition}
Suppose that $h \in L[x]^m$ and $H \in L[X]^M$ are \HE-paired through matrices
$B \in \Mat_{m,M}(K)$ and $C \in \Mat_{N,n}(L)$. 

We say that an image apex $a'\in L^m$ of $h$ over $K$ \emph{is inherited from} 
$H$ if $a' = Ba$ for some image apex $a \in L^m$ of $H$ over $K$ 
(which is unique if it exists).

We say that a projective image apex $p' \in L^m$ of $h$ over $K$ \emph{is inherited from} 
$H$ if $p' = Bp$ for some projective image apex $p \in L^m$ of $H$  over $K$
(which is unique if it exists).
\end{definition}

The aim of theorem \ref{cominher} below is to reduce to the case where $\trdeg_L L(H) = m - 1$ in the proof of theorem \ref{piaLK}. As a consequence, theorems \ref{piaLK}, \ref{iaLK} and \ref{xsc} do not contain the condition $\trdeg_L L(H) \ge m - 1$.

\begin{theorem} \label{cominher}
Assume that $h \in L[x]^m$ and $H \in L[X]^M$ are \HE-paired through matrices 
$B \in \Mat_{m,M}(K)$ and $C \in \Mat_{N,n}(L)$. Suppose that
$$
B = B^{(3)}B^{(1)} = B^{(4)}B^{(2)}
\qquad \mbox{and} \qquad 
C = C^{(1)}C^{(3)} = C^{(2)}C^{(4)}
$$
such that $B^{(1)}, B^{(2)}, B^{(3)}, B^{(4)}$ are matrices over $K$, and 
$C^{(1)}, C^{(2)}, C^{(3)}, C^{(4)}$ are matrices over $L$, 
with appropriate dimensions.

Then $H^{(i)} := B^{(i)}H(C^{(i)}X)$ and $H$ are \HE-paired through 
$B^{(i)}$ and $C^{(i)}$ for both $i \le 2$. 
Furthermore, $h$ and $H^{(i)}$ are \HE-paired through $B^{(i+2)}$ and $C^{(i+2)}$ 
for both $i \le 2$. 
\begin{center}
\begin{tikzpicture}[size/.style={inner sep=0pt,minimum size=#1}]
\draw (-90:2.5) node[shape=circle,fill=black!15,size=1cm,draw] {$h$} -- 
      node[left,fill=white,draw] {$B^{(4)}$} 
      node[right,fill=white,draw] {$C^{(4)}$}
      (10:2.5) node[shape=circle,fill=black!15,size=1cm,draw] {$H^{(2)}$} --
      node[left,fill=white,draw] {$B^{(2)}$} 
      node[right,fill=white,draw] {$C^{(2)}$}
      (90:2.5) node[shape=circle,fill=black!15,size=1cm,draw] {$H$} --
      node[left,fill=white,draw] {$B^{(1)}$} 
      node[right,fill=white,draw] {$C^{(1)}$}
      (170:2.5) node[shape=circle,fill=black!15,size=1cm,draw] {$H^{(1)}$} -- 
      node[left,fill=white,draw] {$B^{(3)}$} 
      node[right,fill=white,draw] {$C^{(3)}$}
      (-90:2.5) -- node[left,draw] {$B$} node[right,draw] {$C$} (90:2.5);
\end{tikzpicture}
\end{center}
Let $q \in L^m$ be an image apex or a projective image apex of $h$ over $K$.
If $q$ is inherited from $H$, then $q$ is inherited from $H^{(i)}$ for 
both $i$ and from $B^{(i)}H$ for both $i$.

If $q$ is inherited from $B^{(i)}H$ for both $i$ and 
$$
\ker B^{(1)} \cap \ker B^{(2)} = \{0\}
$$
then $q$ is inherited from $H$.
Furthermore, the (projective) image apex $Q \in L^m$ of $H$ over $K$ such that 
$BQ = q$ is determined by $B^{(1)}Q = Q^{(1)}$ and $B^{(2)}Q = Q^{(2)}$,
where $Q^{(i)}$ is a (projective) image apex of $B^{(i)}H$ over $K$
such that $q = B^{(i+2)}Q^{(i)}$ for both $i$.
\end{theorem}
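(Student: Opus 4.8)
The plan is to treat the theorem in three stages: the two dB-pairing assertions, the forward implication, and the backward implication together with the identification of $Q$. Throughout put $r:=\trdeg_K K(H)=\trdeg_K K(h)$ and, as in proposition \ref{inherprop}, write $BH$ for the composite map $X\mapsto B(H(X))$. For the dB-pairings, since $B=B^{(i+2)}B^{(i)}$ and $C=C^{(i)}C^{(i+2)}$ one has $h=B(H(Cx))=B^{(i+2)}\big(H^{(i)}(C^{(i+2)}x)\big)$, which is property~(i) both for $(H^{(i)},H)$ through $(B^{(i)},C^{(i)})$ and for $(h,H^{(i)})$ through $(B^{(i+2)},C^{(i+2)})$; property~(ii) then follows by squeezing $\trdeg_K K(h)\le\trdeg_K K\big(H^{(i)}(C^{(i+2)}x)\big)\le\trdeg_K K(H^{(i)})\le\trdeg_K K(H)$, whose two ends equal $r$. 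The same squeeze with $C^{(i)}$ omitted shows in addition that $h$ and $B^{(i)}H$ are dB-paired through $B^{(i+2)}$ and $C$, with $\trdeg_K K(B^{(i)}H)=r$; this is the pairing relative to which ``$q$ inherited from $B^{(i)}H$'' is to be read.

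For the forward implication, suppose $q=BQ$ with $Q$ a (projective) image apex of $H$ over $K$, and put $Q^{(i)}:=B^{(i)}Q$, so that $q=B^{(i+2)}Q^{(i)}$. Applying proposition \ref{inherprop} to the dB-pairing of $H^{(i)}$ and $H$ through $B^{(i)},C^{(i)}$ makes $Q^{(i)}=B^{(i)}Q$ a (projective) image apex of both $H^{(i)}$ and $B^{(i)}H$ over $K$, with $Q^{(i)}\ne 0$ in the projective case because $B^{(i+2)}Q^{(i)}=q\ne 0$. Since $h$ is dB-paired with $H^{(i)}$ and with $B^{(i)}H$ through $B^{(i+2)}$, the definition of inheritance now gives that $q$ is inherited from $H^{(i)}$ and from $B^{(i)}H$ for both $i$.

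For the backward implication, suppose $q=B^{(i+2)}Q^{(i)}$ with $Q^{(i)}$ a (projective) image apex of $B^{(i)}H$ over $K$ for $i=1,2$; write $\beta$ for the matrix stacking $B^{(1)}$ over $B^{(2)}$, which is injective over $K$ by the hypothesis $\ker B^{(1)}\cap\ker B^{(2)}=\{0\}$ and hence admits a left inverse $P$ over $K$, and set $G:=\beta H=(B^{(1)}H,B^{(2)}H)$ (so $K(G)=K(H)$ and $\trdeg_K K(G)=r$) and $\hat Q:=(Q^{(1)},Q^{(2)})$. The crux is to prove that $\hat Q$ is a (projective) image apex of $G$ over $K$. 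I would apply proposition \ref{inherprop} to the dB-pairing of $h$ and $B^{(i)}H$ through $B^{(i+2)},C$: for each $i$ it yields that $(1-t)(B^{(i)}H)+tQ^{(i)}$ — in the projective case $B^{(i)}H+tQ^{(i)}$ — is algebraic over $K\big((1-t)(BH)+tq\big)$ (resp.\ $K\big(BH+tq\big)$), and that $q$ is a (projective) image apex of $BH$. Stacking the cases $i=1,2$, all of $(1-t)G+t\hat Q$ (resp.\ $G+t\hat Q$) is algebraic over that field, whose transcendence degree over $K$ is $\trdeg_K K(BH)=r=\trdeg_K K(G)$ by proposition \ref{algdepprop} and $(BH)(Cx)=h$. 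As its subtuple $(1-t)(B^{(1)}H)+tQ^{(1)}$ (resp.\ $B^{(1)}H+tQ^{(1)}$) already has transcendence degree $r$ over $K$, proposition \ref{algdepprop} forces $\hat Q$ (which is nonzero in the projective case since $Q^{(1)}\ne 0$) to be a (projective) image apex of $G$ over $K$.

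It remains to descend to $H$. For a $K$-linear form $\ell$ with $\ell\circ\beta=0$ we have $\ell(G)=\ell(\beta H)=0$, so apexhood of $\hat Q$ gives $t\,\ell(\hat Q)=\ell\big((1-t)G+t\hat Q\big)=0$ (resp.\ $\ell(G+t\hat Q)=0$), whence $\ell(\hat Q)=0$; since such forms cut out $\operatorname{im}\beta$, we get $\hat Q\in\operatorname{im}\beta$. As $\beta$ is injective, there is a unique $Q\in L^M$ with $\beta Q=\hat Q$, i.e.\ $B^{(1)}Q=Q^{(1)}$ and $B^{(2)}Q=Q^{(2)}$. From $H=PG$ and proposition \ref{apexmap}, $Q=P\hat Q$ is a (projective) image apex of $H$ over $K$ (nonzero in the projective case as $Q^{(1)}\ne 0$), and $BQ=B^{(3)}B^{(1)}Q=B^{(3)}Q^{(1)}=q$; thus $q$ is inherited from $H$, and the uniqueness clause of proposition \ref{inherprop} for the pairing of $h$ and $H$ through $B,C$ pins $Q$ down as the unique such apex, determined by $B^{(1)}Q=Q^{(1)}$ and $B^{(2)}Q=Q^{(2)}$. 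I expect the only genuine obstacle to be the crux of the third stage — manufacturing the apex $\hat Q$ of $G$ — which is precisely where the algebraicity half of proposition \ref{inherprop}, not merely the apex-transfer of proposition \ref{apexmap}, is indispensable; the remainder is linear algebra together with the transcendence-degree bookkeeping already prepared in the earlier propositions.
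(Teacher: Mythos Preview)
Your proof is correct and follows essentially the same strategy as the paper's. The only organizational difference is in the backward implication: you stack $B^{(1)},B^{(2)}$ into a single injective matrix $\beta$ with left inverse $P$, show that the stacked vector $\hat Q=(Q^{(1)},Q^{(2)})$ is an apex of $G=\beta H$ and lies in $\operatorname{im}\beta$, and then descend via $H=PG$ and proposition~\ref{apexmap}; the paper instead works componentwise, writing each $e_j\tp$ as a combination of rows of $B^{(1)}$ and $B^{(2)}$ to produce $Q_j$ directly and then checking $BQ=q$ by a transcendence argument. Both routes hinge on exactly the same fact, supplied by proposition~\ref{inherprop}, that $B^{(i)}H+tQ^{(i)}$ is algebraic over $K(BH+tq)$, so the difference is purely packaging.
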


\begin{proof}
The claims about \HE-pairing follow from 
$$
\trdeg_K K(h) \le \trdeg_K K(H^{(i)}) \le \trdeg_K K(H) = \trdeg_K K(h)
$$
for both $i$.
From the other claims, we only prove the case where $q$ is a projective image 
apex of $h$.

To prove the first claim of this theorem, suppose that $q$ is inherited from $H$, 
say that $q = BQ$, such that $Q$ is a projective image apex of $H$. From proposition 
\ref{inherprop}, it follows that $B^{(i)}Q$ is a projective image apex of $H^{(i)}$ 
for both $i$ and of $B^{(i)}H$ for both $i$. So $q = B^{(i+2)}B^{(i)}Q$ is inherited from $H^{(i)}$ for both $i$ and from $B^{(i)}H$ for both $i$, 
which is the first claim of this theorem.

To prove the second claim, suppose that $q$ is inherited from $B^{(i)}H$ for both
$i$ and that $\ker B^{(1)} \cap \ker B^{(2)} = \{0\}$. Then for both $i$, 
there exists a $Q^{(i)}$ which is a pro\-jec\-tive image apex of $B^{(i)}H$, 
such that $q = B^{(i+2)}Q^{(i)}$. 

Take $i \le 2$ arbitrary. On account of proposition 
\ref{inherprop}, $q$ is also a projective image apex of $B^{(i+2)}B^{(i)}H = BH$.
From proposition \ref{trdegprop} and \eqref{HEphieq}, it follows that 
\begin{align*}
\trdeg_K K\big(B^{(i)}H + t Q^{(i)}\big) &= \trdeg_K K\big(B^{(i)}H\big) = \trdeg_K K\big(H\big) \\
&= \trdeg_K K\big(BH\big) = \trdeg_K K\big(BH + t q\big)
\end{align*}
Since 
$$
K\big(B^{(i)}H + t Q^{(i)}\big) \supseteq K\big(B^{(i+2)}(B^{(i)}H + t Q^{(i)})\big) 
= K\big(BH + t q\big)
$$
we deduce that $B^{(i)} H + t Q^{(i)}$ is algebraic over $K\big(BH + tq\big)$.

Take $j \le M$ arbitrary. Since $\ker B^{(1)} \cap \ker B^{(2)} = \{0\}$, the row spaces of $B^{(1)}$ and $B^{(2)}$ add up to $K^M$. Since the $j$-th standard basis unit vector of 
$K^M$ is included, there exists a $Q_j \in K$ such that $H_j + t Q_j$ is algebraic over $K\big(BH + tq\big)$. Let $Q = (Q_1, Q_2,\ldots, Q_M)$. Then $H + t Q$ and hence also $BH + t BQ$ is algebraic over $K\big(BH + tq\big)$. So $t(q-BQ) = (BH + tq) - (BH + t BQ)$ is algebraic over $K\big(BH + tq\big)$. Hence $q = BQ$ on account of proposition \ref{algdepprop}. Furthermore,
$$
\trdeg_K K\big(H + t Q\big) = \trdeg_K K\big(BH + tq\big) 
= \trdeg_K K\big(BH\big) = \trdeg_K K\big(H\big)
$$
It follows from proposition \ref{trdegprop} that $Q$ is a projective image apex of $H$, which is the second claim of this theorem.

From proposition \ref{inherprop}, we infer that $B^{(i)} Q = Q^{(i)}$ for both $i$.
If $\tilde{Q} \in L^m$ satisfies $B^{(i)} \tilde{Q} = Q^{(i)}$ for both $i$, then
$Q - \tilde{Q} \in \ker B^{(1)} \cap \ker B^{(2)} = \{0\}$. So 
$B^{(1)}Q = Q^{(1)}$ and $B^{(2)}Q = Q^{(2)}$ determine $Q$, which is the last claim
of this theorem.
\end{proof}

\begin{lemma} \label{fLK}
Let $L$ be an extension field of a field $K$ and $H \in K[x]^m$.
Suppose that $g \in L[y]$, say that $g = \lambda_1 g^{(1)} + \lambda_2 g^{(2)} + \cdots + 
\lambda_r g^{(r)}$, where $g^{(i)} \in K[y]$ and $\lambda_i \in L$ for all $i$.

If $\lambda_1, \lambda_2, \ldots, \lambda_r$ are linearly independent
over $K$, then $g(H) = 0$ implies $g^{(i)}(H) = 0$ for all $i$.
\end{lemma}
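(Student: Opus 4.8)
The plan is to reduce the statement to a piece of linear algebra over $K$ inside $K[x]$, exploiting that $H$ has its coefficients in $K$. First I would record that, because $g^{(i)} \in K[y]$ and $H \in K[x]^m$, each polynomial $v_i := g^{(i)}(H)$ actually lies in $K[x]$, not merely in $L[x]$, while by hypothesis $g(H) = \lambda_1 v_1 + \lambda_2 v_2 + \cdots + \lambda_r v_r = 0$ as an element of $L[x]$. Thus the lemma follows at once from the following purely linear fact: if $v_1, \ldots, v_r \in K[x]$ and $\sum_{i} \lambda_i v_i = 0$ with $\lambda_1, \ldots, \lambda_r \in L$ linearly independent over $K$, then $v_i = 0$ for every $i$.

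To prove that fact I would expand in the monomial basis. The monomials $x^\alpha$ form simultaneously a $K$-basis of $K[x]$ and an $L$-basis of $L[x]$. Writing $v_i = \sum_\alpha c_{i,\alpha}\, x^\alpha$ with $c_{i,\alpha} \in K$ (only finitely many nonzero), the relation $\sum_i \lambda_i v_i = 0$ becomes $\sum_\alpha \bigl( \sum_i c_{i,\alpha}\lambda_i \bigr) x^\alpha = 0$ in $L[x]$, so $\sum_i c_{i,\alpha}\lambda_i = 0$ for every $\alpha$. Since each $c_{i,\alpha}$ lies in $K$ and $\lambda_1, \ldots, \lambda_r$ are linearly independent over $K$, this forces $c_{i,\alpha} = 0$ for all $i$ and $\alpha$, that is, $v_i = 0$ for every $i$, which is what was wanted.

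There is essentially no real obstacle: the one point needing a moment's care is to note that the $v_i$ have coefficients in $K$ (this is precisely where the hypothesis $H \in K[x]^m$, rather than $H \in L[x]^m$, enters) and then to use the monomials as a basis over both fields at once. An alternative route is to extend $\{\lambda_1, \ldots, \lambda_r\}$ to a $K$-basis of $L$ and compare coefficients of the $\lambda_i$ under the identification $L[x] \cong L \otimes_K K[x]$; this amounts to the same computation but requires a basis of $L$ over $K$, so I would prefer to write up the monomial-basis argument, which avoids that.
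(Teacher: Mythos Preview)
Your proof is correct and follows essentially the same approach as the paper's. The paper's version is slightly terser: it observes that $\lambda_1,\ldots,\lambda_r$, being linearly independent over $K$, remain linearly independent over $K(x)$, and then concludes directly from $g(H)=\sum_i \lambda_i g^{(i)}(H)$ with $g^{(i)}(H)\in K[x]$; your monomial expansion is exactly the standard way to justify that linear-independence-over-$K(x)$ step, so the two arguments coincide.
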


\begin{proof}
Suppose that $\lambda_1, \lambda_2, \ldots, \lambda_r$ are linearly independent
over $K$. Then $\lambda_1, \lambda_2, \allowbreak \ldots, \lambda_r$ are linearly independent
over $K(x)$ as well. Since
$$
g(H) = \lambda_1 g^{(1)}(H) +  \lambda_2 g^{(2)}(H) + \cdots + \lambda_r g^{(r)}(H)
$$
it follows that $g(H) = 0$ implies $g^{(i)}(H) = 0$ for all $i$.
\end{proof}

\begin{lemma} \label{ptLK}
Suppose that $K$ is algebraically closed in $L$. Assume that $f \in K[y]$ and $g \in L[y]$, 
such that $g \mid f$ over $L[y]$. 
Then there exists a unit $\lambda \in L$ such that $\lambda^{-1} g \in K[y]$.
\end{lemma}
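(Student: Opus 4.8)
The plan is to factor $f$ into irreducibles over $\bar L$ (an algebraic closure of $L$) and track how the factor $g$ sits inside this factorization, using the hypothesis that $K$ is algebraically closed in $L$ to descend coefficients back to $K$. First I would reduce to the case where $f$ and $g$ are primitive with respect to, say, the lexicographically leading variable, so that Gauss's lemma lets me work with polynomial divisibility rather than field-of-fractions divisibility; equivalently, I can clear denominators and assume $g\mid f$ in $L[y]$ with $f/g\in L[y]$. Write $f = g \cdot h$ with $h\in L[y]$.

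Next, the core idea: consider the coefficients of $g$, and let $V\subseteq L$ be the $K$-vector space they span, with $K$-basis $\lambda_1,\ldots,\lambda_r$ (WLOG $\lambda_1 = 1$ after dividing $g$ by one nonzero coefficient — this is where the unit $\lambda$ will come from, so I set $g_0 := \lambda_1^{-1}g$ and aim to show $g_0\in K[y]$, i.e. $r=1$). Write $g_0 = g^{(1)} + \lambda_2' g^{(2)} + \cdots$ accordingly. Similarly expand $h$ over a $K$-basis. The equation $f = \lambda_1 g_0 h$ with $f\in K[y]$ says that a certain $L$-linear combination of products of $K$-polynomials equals an element of $K[y]$; comparing with a $K$-basis of the (finite-dimensional) $K$-span of all the relevant coefficients of $g_0$ and $h$ forces linear relations. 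The cleanest route, though, is Galois-theoretic when $L/K$ is separable: for any $K$-embedding $\sigma$ of $L$ (or of the finitely generated subfield containing all coefficients) into $\bar K$, apply $\sigma$ coefficientwise to $f = gh$ to get $f = \sigma(g)\sigma(h)$, since $\sigma$ fixes $f\in K[y]$. Thus $\sigma(g)$ also divides $f$. Because $K$ is algebraically closed in $L$, the subfield $K(\text{coeffs of }g,h)$ is a purely transcendental-then-regular extension... — more precisely, I would instead argue directly: $g$ and $\sigma(g)$ are both divisors of $f$ with the same multidegree, and over $\bar L$ they are products of the same irreducible factors of $f$; I then show $\sigma(g)$ and $g$ are associates, so $\sigma(g_0) = g_0$ for the normalized $g_0$, whence $g_0$ has coefficients fixed by all such $\sigma$, hence in the algebraic closure of $K$ inside $L$, which is $K$.

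For a proof that avoids separability hypotheses (the paper only assumes characteristic zero globally, but this lemma is stated for general $K\subseteq L$), I would fall back on the vector-space argument: let $W$ be the finite-dimensional $K$-subspace of $L$ spanned by all coefficients of $g_0$ and of $h$ together with $1$; pick a $K$-basis $1=\mu_1,\mu_2,\ldots,\mu_d$ of $W$ and a "dual" setup so that multiplication $W\times W\to L$ can be expanded in a $K$-basis of the span of all products $\mu_i\mu_j$. Writing $g_0 = \sum_i \mu_i G_i$ and $h = \sum_j \mu_j H_j$ with $G_i,H_j\in K[y]$ and $G_1\ne 0$, the identity $\lambda_1^{-1}f = g_0 h$ expands as $\sum_{i,j}\mu_i\mu_j G_iH_j$, and reading off the component along basis vectors not equal to $1$ that involve $\mu_1\mu_1=1$... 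Actually the slick finish: apply Lemma \ref{fLK} in reverse is not available, so instead note that since $g_0\mid f$ in $L[y]$ and $f\in K[y]$, the ideal-theoretic content argument shows the set of $L$-multiples of $g_0$ meeting $K[y]$ nontrivially forces $g_0$ proportional to its "$K$-part"; concretely, among $\mu_1,\ldots,\mu_d$ choose one, say $\mu_d$, maximal in a monomial order on $W$ induced from... I expect the main obstacle to be exactly this: making the descent rigorous without Galois theory. If the paper is content to use that $K$ is algebraically closed in $L$ only through the existence of enough automorphisms (true after passing to a normal closure in char $0$, and the paper does assume char $0$ for its main results), then the Galois argument in the previous paragraph is clean and I would present that as the proof, remarking that the normalizing unit is $\lambda = \lambda_1$, any nonzero coefficient of $g$.
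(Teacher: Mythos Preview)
Your Galois-theoretic sketch has a real gap at the key step. From $f = \sigma(g)\sigma(h)$ you correctly get $\sigma(g) \mid f$, but your assertion that $\sigma(g)$ and $g$ are ``products of the same irreducible factors of $f$'' is unjustified: $\sigma$ may permute the irreducible factors of $f$ over $\bar L$ nontrivially, so $\sigma(g_0)$ can be a genuinely different monic divisor of $f$. (What \emph{is} true, and would rescue the argument, is that $f$ has only finitely many monic divisors over $\bar L$, so each coefficient of $g_0$ has finite orbit under $K$-automorphisms of $\bar L$; an element of $L$ transcendental over $K$ has infinite such orbit, e.g.\ via an automorphism extending $c \mapsto c+1$, so every coefficient must be algebraic over $K$, hence in $K$.) Your embeddings-into-$\bar K$ variant cannot work as stated: since $K$ is algebraically closed in $L$, the only nontrivial case is $\trdeg_K L > 0$, where no $K$-embedding $L \hookrightarrow \bar K$ exists. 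The vector-space fallback is, as you concede, not completed.

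The paper's proof is shorter and avoids automorphisms. For $f \in K[y_1]$: let $\lambda$ be the leading coefficient of $g$ and write $\lambda^{-1}g = \prod_i (y_1 - \alpha_i)$ over an extension $\tilde L$ of $L$; each $\alpha_i$ is a root of $f$, hence algebraic over $K$, so the coefficients of $\lambda^{-1}g$ lie in $K[\alpha_1,\ldots,\alpha_d] \cap L = K$. For general $y = (y_1,\ldots,y_m)$ the paper applies the Kronecker substitution $y_i \mapsto y_1^{(d+1)^{i-1}}$ with $d = \deg g$. Distinct monomials of degree at most $d$ map to distinct powers of $y_1$, so $g$ and its image $\tilde g$ have identical sets of nonzero coefficients; since $\tilde g \mid \tilde f$ in $L[y_1]$, the univariate case gives a $\lambda$ with $\lambda^{-1}\tilde g \in K[y_1]$, and hence $\lambda^{-1}g \in K[y]$.
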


\begin{proof}
Let $d := \deg g$. We distinguish two cases.
\begin{itemize}
 
\item \emph{$f \in K[y_1]$.} \\
Then there exists an extension field $\tilde{L}$ of $L$, such that $f$ 
decomposes into linear factors over $\tilde{L}$. So there are 
$\alpha_i \in \tilde{L}$ such that
$$
g = \lambda \prod_{i=1}^d (y_1 - \alpha_i)
$$
where $\lambda$ is the leading coefficient of $g$ and $d := \deg g$. 
From $g \mid f \in K[y_1]$ it follows that $f(\alpha_i)= 0$ for all $i$. 
So $\alpha_i$ is algebraic over $K$ for all $i$. 
Furthermore, since the coefficients of $\lambda^{-1} g$ are contained in 
$K[\alpha_1,\alpha_2,\ldots,\alpha_d]$, they are algebraic over $K$ as well.
On the other hand, $\lambda^{-1}g \in L[y]$ and $K$ is algebraically closed in 
$L$, so $\lambda^{-1}g \in K[y_1]$.

\item \emph{$f \notin K[y_1]$.} \\
Define 
\begin{align*}
\tilde{f} &:= f\Big(y_1,y_1^{d+1},y_1^{(d+1)^2},\ldots,y_1^{(d+1)^{m-1}}\Big) \\
\tilde{g} &:= g\Big(y_1,y_1^{d+1},y_1^{(d+1)^2},\ldots,y_1^{(d+1)^{m-1}}\Big)
\end{align*}
Then $\tilde{g} \mid \tilde{f}$ over $L$. From the case $f \in K[y_1]$ above, 
it follows that there exists a $\lambda \in L$ such that $\lambda^{-1} \tilde{g} \in K[y]$.
As $d+1 > \deg f$, the set of nonzero coefficients of $\tilde{g}$ is the same as that
of $g$. So $\lambda^{-1} g \in K[y]$. \qedhere

\end{itemize}
\end{proof}

\begin{lemma} \label{pquasi}
Let $L$ be an extension field of a field $K$ of characteristic zero.
Take $H \in L[x]^m$ and $p \in L^m$.

Suppose that $\trdeg_K K(H) \le \trdeg_L L(H) = m - 1$. Then there exists a nonzero 
$f \in K[y]$ such that $f(H) = 0$. If we take $f$ as such of minimum degree,
then $F := (\grad_y f)(H) \ne 0$. 
Furthermore, $F\tp \cdot \jac H = 0$ and the following statements are equivalent:
\begin{enumerate}[\upshape (1)]

\item $p$ is a projective image apex over $K$ of $H$ or zero;

\item $p$ is a projective image apex over $L$ of $H$ or zero;

\item $G\tp \cdot p = 0$ for every $G \in L(x)$, such that $G\tp \cdot \jac H = 0$;

\item there exists a nonzero $G \in L(x)^m$ such that $G\tp \cdot \jac H = 0$ and 
$G\tp \cdot p = 0$;

\item $\jac_y g \cdot p = 0$ for every $g \in L[y]$ of minimum degree, such that $g(H) = 0$;

\item there exists a nonzero $g \in L[y]$ such that $g(H) = 0$ and
$\jac_y g \cdot p = 0$.

\end{enumerate}
\end{lemma}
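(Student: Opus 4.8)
The plan is to establish everything in several stages. First I would verify the preliminary claims. Since $\trdeg_K K(H) < m = $ (number of components of $H$), the components $H_1,\ldots,H_m$ are algebraically dependent over $K$, so a nonzero $f\in K[y]$ with $f(H)=0$ exists; take one of minimal degree. Differentiating $f(H)=0$ with respect to each $x_j$ via the chain rule gives $(\grad_y f)(H)\tp\cdot\jac H=0$, i.e. $F\tp\cdot\jac H=0$. That $F\ne 0$ is where characteristic zero enters: if $F=0$, then every $\partial f/\partial y_i$ vanishes on $H$, and since $\operatorname{char}K=0$ each $\partial f/\partial y_i$ has strictly smaller degree than $f$, contradicting minimality of $\deg f$ unless all partials are identically zero — but in characteristic zero a polynomial with all partials zero is constant, contradicting $f\ne 0$ nonconstant (it is nonconstant because $H$ has $\trdeg_L L(H)=m-1\ge$ enough components, and a constant cannot vanish). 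I should be slightly careful here about the possibility $m=1$, but then $\trdeg_L L(H)=0$ forces $H_1$ algebraic over $L$ hence constant, and the statement is essentially vacuous; I would dispatch that edge case separately or note $m\ge 1$ with the hypotheses forcing the interesting range.

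**The cycle of equivalences.**

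For the equivalences I would prove $(1)\Leftrightarrow(2)$, $(3)\Leftrightarrow(4)$, $(5)\Leftrightarrow(6)$, and then link the three pairs. The implication $(1)\Rightarrow(2)$ is trivial since a projective image apex over $K$ is a fortiori one over $L$ (the defining condition quantifies over fewer $f$'s). For $(2)\Rightarrow(1)$, use Proposition~\ref{algdepprop}(ii): $p$ is a projective image apex over $L$ iff $\trdeg_L L(H+tp)=\trdeg_L L(H)$ iff $t$ is algebraically independent over $L$ of $H+tp$; but the hypothesis $\trdeg_K K(H)\le\trdeg_L L(H)=m-1$, combined with $\trdeg_K K(H+tp)\ge\trdeg_L L(H+tp)$ and the fact that adjoining $t$ raises transcendence degree by exactly one over either field, forces the $K$-version of the equality, whence $p$ is a projective image apex over $K$. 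The link $(3)\Leftrightarrow(4)$: $(3)\Rightarrow(4)$ is immediate by taking $G=F$, which is nonzero and satisfies $F\tp\jac H=0$ and (by $(3)$) $F\tp p=0$; for $(4)\Rightarrow(3)$ I need that the solution space $\{G\in L(x)^m : G\tp\jac H=0\}$ is one-dimensional over $L(x)$ — this holds because $\jac H$ has rank $m-1$ over $L(x)$ (as $\trdeg_L L(H)=m-1$), so its left kernel is a line, spanned by $F$; hence any such nonzero $G$ is a scalar multiple of $F$, and $G\tp p=0\Leftrightarrow F\tp p=0$. The pair $(5)\Leftrightarrow(6)$ works the same way once I know that all $g\in L[y]$ of minimal degree with $g(H)=0$ have $\grad_y g(H)$ proportional: indeed $\grad_y g(H)$ lies in the left kernel of $\jac H$, which is spanned by $F$, so $\grad_y g\cdot p$ vanishes for one such $g$ iff the scalar relating $\grad_y g(H)$ to $F$ is nonzero and $F\tp p=0$ — here I must argue the scalar is nonzero, i.e. $\grad_y g(H)\ne 0$, which is the same minimality/characteristic-zero argument as for $F$.

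**Tying the clusters together and the main obstacle.**

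It remains to connect, say, $(4)$ and $(2)$, and $(6)$ to the rest. The cleanest bridge is: $(3)$ says $F\tp p=0$, and $(6)$ with the minimal $f\in K[y]$ gives $(\grad_y f)\cdot p=0$ evaluated... no — I must be careful that $(5)$--$(6)$ quantify over $g\in L[y]$ while $F=(\grad_y f)(H)$ comes from $f\in K[y]$. Since $f\in K[y]\subseteq L[y]$ and $f$ has minimal degree among all of $K[y]$, but possibly not among all of $L[y]$, I need Lemma~\ref{ptLK}: any minimal-degree $g\in L[y]$ with $g(H)=0$ divides $f$ over $L[y]$ (because the ideal of relations over $L$ is principal — $\jac H$ has corank one — generated by such a $g$, and $f$ lies in it), and since $K$ is... wait, the lemma needs $K$ algebraically closed in $L$, which is not assumed here. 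So instead I would argue directly: the minimal degree over $L$ equals the minimal degree over $K$ because if $g\in L[y]$ had smaller degree, writing $g=\sum\lambda_i g^{(i)}$ with $g^{(i)}\in K[y]$ and $\lambda_i$ $K$-independent, Lemma~\ref{fLK} gives $g^{(i)}(H)=0$ with $\deg g^{(i)}\le\deg g<\deg f$, contradicting minimality of $f$ over $K$ — unless all $g^{(i)}$ are zero, impossible. Hence minimal-degree relations over $L$ and over $K$ have the same degree, $f$ is among the minimal-degree relations over $L$, and $F=(\grad_y f)(H)$ is (a scalar multiple of) the common kernel generator. Now $(3)\Leftrightarrow F\tp p=0\Leftrightarrow(6)$ with $g=f$, and $(2)$ follows from $F\tp p=0$ by: $F\tp\jac H=0$ means $\jac(H,? )$... the direct route is Proposition~\ref{algdepprop}(ii) again — one checks $t$ alg. indep. over $L$ of $H+tp$ iff $\jac(H+tp)$ has full row rank $m$ over $L(x,t)$, and since $\jac(H+tp)=\jac H + $ (column of $p$'s times $\partial t/\partial\cdot$, i.e. appending... ) — more cleanly, $F\tp\jac(H+tp) = (F\tp\jac H, F\tp p\cdot 1) = (0, F\tp p)$, so $F$ witnesses a dependency of the rows of $\jac(H+tp)$ (as a map in variables $x,t$) precisely when $F\tp p=0$, giving corank $\ge 1$, hence $t$ algebraically dependent on $H+tp$ fails... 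I need to be careful with the direction, but this is the standard Jacobian criterion computation. \textbf{The main obstacle} I expect is precisely this bookkeeping: ensuring the minimal-degree relation can be taken in $K[y]$ and that its gradient-image $F$ genuinely spans the one-dimensional left kernel of $\jac H$ over $L(x)$ — in other words, that the "minimal degree" notions over $K$ and $L$ coincide and that characteristic zero prevents $F$ from degenerating to zero. Once that linchpin is in place, all six statements reduce to the single condition $F\tp p=0$, and the remaining implications are routine applications of Propositions~\ref{algdepprop} and the rank-$(m-1)$ Jacobian fact.
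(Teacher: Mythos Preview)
You have two genuine errors. First, the trivial direction between $(1)$ and $(2)$ is reversed: being a projective image apex over $L$ is the \emph{stronger} condition (it tests against all $g\in L[y]\supseteq K[y]$), so $(2)\Rightarrow(1)$ is immediate from $K[y]\subseteq L[y]$, not $(1)\Rightarrow(2)$. Your transcendence-degree sketch, once redirected, does prove $(1)\Rightarrow(2)$: if $\trdeg_K K(H+tp)=\trdeg_K K(H)\le m-1$, then $\trdeg_L L(H+tp)\le\trdeg_K K(H+tp)\le m-1=\trdeg_L L(H)$, and the reverse inequality holds by substituting $t=0$.

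Second, and more seriously, your ``linchpin'' --- that the minimal degree of a relation over $K$ equals that over $L$ --- is false, and Lemma~\ref{fLK} does not apply since it requires $H\in K[x]^m$ whereas here $H\in L[x]^m$. Counterexample: $K=\mathbb{Q}$, $L=\mathbb{Q}(\sqrt{2})$, $H=(x_1,\sqrt{2}\,x_1)$; then $\trdeg_K K(H)=\trdeg_L L(H)=1$, the minimal $L$-relation is $y_2-\sqrt{2}\,y_1$ (degree~$1$), the minimal $K$-relation is $f=y_2^2-2y_1^2$ (degree~$2$), and for $p=(1,\sqrt{2})$ one has $F\tp p=0$ but $\jac_y f\cdot p=-4y_1+2\sqrt{2}\,y_2\ne0$, so $g=f$ does \emph{not} witness~$(6)$. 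The paper avoids this by never comparing the two minimal degrees: it runs the cycle $(2)\Rightarrow(1)\Rightarrow(3)\Rightarrow(4)\Rightarrow(5)\Rightarrow(6)\Rightarrow(2)$, and the bridge to $(5)$ uses a minimal-degree $g\in L[y]$, observing that $\tilde G:=(\grad_y g)(H)$ is nonzero and spans the same one-dimensional left kernel of $\jac H$ as any $G$ from~$(4)$, whence $\tilde G\tp p=0$; minimality of $g$ \emph{over $L$} then forces the polynomial $\jac_y g\cdot p$ (of degree $<\deg g$, vanishing at $H$) to be identically zero.
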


\begin{proof}
Suppose that $\trdeg_K K(H) \le \trdeg_L L(H) = m - 1$. Then $f$ exists.
Furthermore, it follows from proposition 1.2.9 of either \cite{MR1790619} 
or \cite{homokema} that $\rk \jac H = m - 1$.

Since $f$ has minimum degree and $\deg \grad_y f < \deg f$, we deduce that $F \ne 0$.
From the chain rule, it follows that $\jac f(H) = (\jac_y f)|_{y=H} \cdot \jac H$, 
so $F\tp \cdot \jac H = 0$.
\begin{description} 

\item[(2) \imp (1)] This follows from $K[y] \subseteq L[y]$ and the definition of 
projective image apex.

\item[(1) \imp (3)]  Assume (1). Then $f(H+tp) = 0$, so
$$
0 = \Big(\parder{}{t} f(H+tp) \Big)\Big|_{t=0} 
  = \big((\jac_y f)|_{y=H+tp} \cdot p \big)\big|_{t=0} = F\tp \cdot p
$$
Now take any $G \in L(x)$, such that $G\tp \cdot \jac H = 0$.
Since $\rk \jac H = m-1$, there is only one dependence between the rows of
$\jac H$, so $G$ is dependent as a vector over $L(x)$ on $F$. Hence $G\tp \cdot p = 0$. 

\item[(3) \imp (4)] As we can take $G = F$, there exists 
a $G \in L(X)^m$ such that $G\tp \cdot \jac H = 0$. This gives (3) $\Rightarrow$ (4).

\item[(4) \imp (5)] Assume (4). Take any $g \in L[y]$ of minimum degree,
such that $g(H) = 0$, if such a $g$ exists. 
Just like $F \ne 0$, $\tilde{G} := (\grad_y g)(H) \ne 0$.
Furthermore, $\tilde{G}\tp \cdot \jac H = 0$. 

Since $\rk \jac H = m-1$, there is only one dependence between the rows of
$\jac H$, so $\tilde{G}$ is dependent as a vector over $L(x)$ on $G$. Hence 
$\tilde{G}\tp \cdot p = 0$. Since $g$ has minimum degree, we 
conclude that $\jac_y g \cdot p = 0$. 

\item[(5) \imp (6)]  As $f \in L[y]$ and $f(H) = 0$, there exists 
a $g \in L[y]$ of minimum degree, such that $g(H) = 0$.
This gives (5) $\Rightarrow$ (6).

\item[(6) \imp (2)] Assume (6). Then $g(H) = 0$ and
$$
\parder{}{t} g(H + tp) = \big((\grad_y g)(H+tp)\big)\tp \cdot p 
= \big((\grad_y g)\tp \cdot p\big)\big|_{y=H+tp} = 0
$$
Hence $g(H + tp) = 0$, so $\trdeg_L L(H+tp) \le m-1 = \trdeg_L L(H)$. 
Since $\trdeg_L L(H+tp) \ge \trdeg_L L(H)$ follows by substituting $t = 0$, 
(2) follows from (ii) of proposition \ref{trdegprop}. \qedhere

\end{description} 
\end{proof}

Theorem \ref{piaLK} expresses that, in the situation which is indicated, the projective image apices over $K$ with coordinates in the fancy field $\tilde{L}$ are essentially projective images with coordinates in $K$. Theorem \ref{iaLK} expresses a similar thing for image apices instead of projective image apices. These results allow us to use geometry over $\C$ to prove the existence of (projective) image apices, because Lefschetz' principle allows us to assume that $\tilde{L} \subseteq \C$.

\begin{theorem} \label{piaLK}
Suppose that $K$ has characteristic zero and that $K$ is algebraically closed in
$L$. Assume that $H \in L[x]^m$, such that $\trdeg_K K(H) \le \trdeg_L L(H)$. 
Let $\tilde{L}$ be an extension field of $L$ and suppose that $p \in \tilde{L}^m$, 
such that $p$ is a projective image apex of $H$ over $K$. 

Take $\lambda_i \in \tilde{L}$, such that $\lambda_1, \lambda_2, \ldots, \lambda_r$
are linearly independent over $K$ and
$$
p = \lambda_1 p^{(1)} + \lambda_2 p^{(2)} + \cdots + \lambda_r p^{(r)}
$$
where $p^{(i)} \in K^m$ for each $i$. Then $p^{(i)}$ is a projective image apex of 
$H$ over $\tilde{L}$ or zero for each $i$.
\end{theorem}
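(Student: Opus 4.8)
The plan is to translate ``projective image apex'' into a stability property of the relation ideal $I:=\{f\in K[y]\mid f(H)=0\}$ and then reduce the whole statement to one linear–algebra fact over the ring $K[H]\cong K[y]/I$. The translation comes from Taylor expansion in $t$: for a constant vector $q$ and the constant–coefficient derivation $D_q:=\sum_j q_j\,\partial/\partial y_j$ one has $f(H+tq)=\sum_{k\ge 0}\tfrac1{k!}(D_q^k f)(H)\,t^k$. Hence, when $q$ has entries in $K$, $q$ is a projective image apex of $H$ over $K$ (or $q=0$) iff $D_q(I)\subseteq I$ (compare coefficients of $t$ for one direction, iterate $D_q$ for the other), and likewise over $\tilde L$ with the relation ideal $I_{\tilde L}:=\{g\in\tilde L[y]\mid g(H)=0\}$. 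In particular, since $p$ is a projective image apex of $H$ over $K$, the coefficient of $t$ in $f(H+tp)=0$ gives $(D_p f)(H)=0$ for every $f\in I$. Writing $p_j=\sum_i\lambda_i p^{(i)}_j$, i.e.\ $D_p=\sum_i\lambda_i D_{p^{(i)}}$, this reads
\[
\sum_{i=1}^r\lambda_i\,(D_{p^{(i)}}f)(H)=0\qquad(f\in I),
\]
and each $(D_{p^{(i)}}f)(H)$ lies in $K[H]$. So it would suffice to know that the $K$–linearly independent $\lambda_i$ remain linearly independent over $K[H]$ inside $\tilde L[x]$.

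The crux is therefore the claim: \emph{if $\mu_1,\dots,\mu_r\in\tilde L$ are linearly independent over $K$, they are linearly independent over $K[H]$ in $\tilde L[x]$.} Equivalently, the natural $\tilde L$–algebra map $\psi\colon\tilde L\otimes_K K[H]\to\tilde L[x]$, $\mu\otimes c\mapsto\mu c$, is injective; and since its image is the subring $\tilde L[H]$, this also yields $I_{\tilde L}=I\tilde L[y]$. To prove injectivity of $\psi$ I would argue as follows. $K[H]\cong K[y]/I$ is a finitely generated $K$–domain of Krull dimension $r_0:=\trdeg_K K(H)$; as $K$ is algebraically closed in $L$ it is algebraically closed in the purely transcendental extension $L(x)$, hence in $K(H)\subseteq L(x)$, so in characteristic zero $K(H)/K$ is a regular field extension and $\tilde L\otimes_K K[H]$ is a domain, still of dimension $r_0$. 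On the other hand $\trdeg_K K(H)\le\trdeg_L L(H)$ together with the trivial reverse inequality gives $\trdeg_L L(H)=r_0$, and lemma~\ref{fLK} applied to the pair $L\subseteq\tilde L$ (using $H\in L[x]^m$) shows that a transcendence basis of $L(H)$ over $L$ among the $H_i$ stays algebraically independent over $\tilde L$, so $\trdeg_{\tilde L}\tilde L(H)=r_0$; thus $\tilde L[H]=\psi(\tilde L\otimes_K K[H])$ is a finitely generated $\tilde L$–domain of dimension $r_0$ as well. A surjection of finitely generated domains over a field with equal Krull dimension is injective, since a nonzero kernel would strictly lower the dimension; hence $\psi$ is injective.

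Granting the claim the proof concludes at once. Applying it with $\mu_i=\lambda_i$ in the displayed identity gives $(D_{p^{(i)}}f)(H)=0$ for every $i$ and every $f\in I$, i.e.\ $D_{p^{(i)}}(I)\subseteq I$. Iterating, $D_{p^{(i)}}^k f\in I$ for all $k$, so $f(H+tp^{(i)})=\sum_k\tfrac1{k!}(D_{p^{(i)}}^k f)(H)\,t^k=0$; thus every nonzero $p^{(i)}$ is a projective image apex of $H$ over $K$. Finally, using $I_{\tilde L}=I\tilde L[y]$, write an arbitrary $g\in I_{\tilde L}$ as $\sum_\ell h_\ell f_\ell$ with $h_\ell\in\tilde L[y]$ and $f_\ell\in I$; then $g(H+tp^{(i)})=\sum_\ell h_\ell(H+tp^{(i)})\,f_\ell(H+tp^{(i)})=0$, so each nonzero $p^{(i)}$ is in fact a projective image apex of $H$ over $\tilde L$, while those $p^{(i)}$ that are zero satisfy the conclusion trivially.

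I expect the linear–disjointness claim to be the only real work, and in particular the place where all three hypotheses enter: characteristic zero and ``$K$ algebraically closed in $L$'' give regularity of $K(H)/K$ and hence that $\tilde L\otimes_K K[H]$ is a domain, while the inequality $\trdeg_K K(H)\le\trdeg_L L(H)$, via lemma~\ref{fLK}, is exactly what forces $\trdeg_{\tilde L}\tilde L(H)=r_0=\dim(\tilde L\otimes_K K[H])$ and so makes $\psi$ dimension–preserving. One should also check the hypotheses are genuinely necessary; e.g.\ over $\mathbb Q\subseteq\mathbb Q(\sqrt2)$ with $H=(x_1,\sqrt2\,x_1)$ the conclusion already fails, which is consistent since there $\mathbb Q$ is not algebraically closed in the base field. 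The remaining ingredients — the Taylor–expansion dictionary for apices and the bookkeeping with $p=\sum_i\lambda_i p^{(i)}$ — are routine.
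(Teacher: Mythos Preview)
Your proof is correct and takes a genuinely different route from the paper's. The paper proceeds by a case split on $\trdeg_L L(H)$: in the codimension-one case $\trdeg_L L(H)=m-1$ it works with an explicit minimal-degree relation $f\in K[y]$, uses lemma~\ref{ptLK} to show $f$ still generates the relation ideal over $L$, and then combines lemma~\ref{pquasi} with lemma~\ref{fLK} and minimality of $\deg f$ to force $\jac_y f\cdot p^{(i)}=0$; the general case is reduced to this one by induction on $m$, choosing two rank-$(m-1)$ projections $B^{(1)},B^{(2)}$ with $\ker B^{(1)}\cap\ker B^{(2)}=\{0\}$ and invoking theorem~\ref{cominher}. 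You instead replace all of this by a single commutative-algebra step: regularity of $K(H)/K$ (from characteristic zero and $K$ algebraically closed in $L$) makes $\tilde L\otimes_K K[H]$ a domain, and the hypothesis $\trdeg_K K(H)\le\trdeg_L L(H)$ forces the evaluation map $\psi$ onto $\tilde L[H]$ to be dimension-preserving, hence injective; from this you extract both the linear independence of the $\lambda_i$ over $K[H]$ (splitting $D_p$ into the $D_{p^{(i)}}$) and the identity $I_{\tilde L}=I\cdot\tilde L[y]$ (upgrading from $K$ to $\tilde L$). The paper's argument is more elementary and stays within the Jacobian/minimal-relation toolkit used elsewhere in the article, at the price of a case split and an induction; yours is shorter and handles all transcendence degrees uniformly, but imports regular-extension and Krull-dimension machinery foreign to the paper's style. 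Note that your identity $I_{\tilde L}=I\tilde L[y]$ is a general form of what lemma~\ref{ptLK} does in the principal case.
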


\begin{proof}
We distinguish two cases.
\begin{itemize}
 
\item \emph{$\trdeg_L L(H) = m - 1$.} \\
Since $\trdeg_K K(H) \le \trdeg_L L(H) = m - 1$, there exists a nonzero polynomial 
$f \in K[y]$ such that $f(H) = 0$. Take $f$ as such of minimum degree and define
$g := \jac_y f \cdot p$. If $g_i := \jac_y f \cdot p^{(i)} = 0$ for all $i$, then
it follows from (6) $\Rightarrow$ (2) of lemma \ref{pquasi} that
$p^{(i)}$ is a projective image apex of $H$ over $\tilde{L}$ or zero for each $i$,
because 
\begin{equation} \label{tildequasi}
\trdeg_{\tilde{L}} \tilde{L}(H) = \trdeg_L L(H) = m - 1
\end{equation}
We show that $g_i = 0$ for every $i$ indeed.

Suppose first that $K = L$. From lemma \ref{pquasi}, it follows that 
$(\jac_y f)(H) \cdot \jac H = 0$. Since $f(H+tp) = 0$, we deduce from 
\eqref{tildequasi} and (1) $\Rightarrow$ (3) of lemma \ref{pquasi} that 
$g(H) = (\jac_y f)(H) \cdot p = 0$. Hence it follows from lemma \ref{fLK} 
that $g^{(i)}(H) = 0$ for all $i$. Since $f$ has minimum degree and 
$\deg g_i < \deg f$ for all $i$, we conclude that $g^{(i)} = 0$ for every $i$. 

Suppose next that $K \ne L$. Since $\trdeg_L L(H) = m-1$, the 
ideal $\p := (\tilde{f} \in L[y] \,|\, \tilde{f}(H) = 0)$ is principal. As 
$f(H) = 0$, any polynomial $\tilde{f}$ which generates $\p$ is a divisor of $f$. 
From lemma \ref{ptLK}, it follows that $\p = (f)$, so $f$ has minimum degree as
a polynomial over $L$ as well. Hence the case $K = L$ above (with different
$\lambda_1, \lambda_2, \ldots$) tells us that $g = 0 + 0 + \cdots = 0$.
As $f \in K[y]$, it follows that $g^{(i)} \in K[y]$ for all $i$. By taking $H = (x_1,x_2,\ldots,x_m)$ in lemma \ref{fLK}, we infer that $g^{(i)} = 0$ for every $i$. 

\item \emph{$\trdeg_L L(H) < m - 1$.} \\
We will apply theorem \ref{cominher} to show that $p^{(i)}$ is a 
projective image apex of $H$ for each $i$.
Since $\trdeg_L L(H) \le m-2$, there exists a matrix $B \in \Mat_{m-2,m}(K)$
of rank $m-2$, such that $\trdeg_L L(BH) = \trdeg_L L(H)$. Take any 
$B^{(1)} \in \Mat_{m-1,m}(K)$ or rank $m-1$, such that the row space of
$B^{(1)}$ contains that of $B$, say that $B = B^{(3)}B^{(1)}$.

Take $B^{(2)}$ and $B^{(4)}$ in a similar manner as 
$B^{(1)}$ and $B^{(3)}$ respectively, but in such a way that the rows 
spaces of $B^{(1)}$ and $B^{(2)}$ are different. Since the row spaces
of $B^{(1)}$ and $B^{(2)}$ have codimension one, we deduce that they add up to
a space of codimension less than one, i.e.\@ $K^m$. So 
$\ker B^{(1)} \cap \ker B^{(2)} = \{0\}$.

Notice that by assumption,
\begin{align*}
f\big(B^{(1)}H\big) = 0 &\Longrightarrow f\big(B^{(1)}H + tB^{(1)}p\big) = 0 \\
f\big(B^{(2)}H\big) = 0 &\Longrightarrow f\big(B^{(2)}H + tB^{(2)}p\big) = 0
\end{align*}
for every $f \in K[y_1,y_2,\ldots,y_{m-1}]$. Furthermore, it follows from
$\trdeg_K \allowbreak K(H) \le \trdeg_L L(H) = \trdeg_L L(BH)$ that 
\begin{align*}
\trdeg_K K(B^{(1)}H) &\le \trdeg_L L(B^{(1)}H) \\
\trdeg_K K(B^{(2)}H) &\le \trdeg_L L(B^{(2)}H)
\end{align*}
By induction on $m$, $B^{(1)}p^{(i)}$ and $B^{(2)}p^{(i)}$ are projective image 
apices of $B^{(1)}H$ and $B^{(2)}H$ respectively for each $i$. 
From proposition \ref{apexmap}, it follows that $Bp^{(i)}$ is a projective image 
apex of $BH$ for each $i$. Now apply theorem \ref{cominher}.
\qedhere

\end{itemize}
\end{proof}

\begin{theorem} \label{iaLK}
Suppose that $K$ has characteristic zero and that $K$ is algebraically closed in
$L$. Assume that $H \in L[x]^m$, such that $\trdeg_K K(H) \le \trdeg_L L(H)$. 
Let $\tilde{L}$ be an extension field of $L$ and suppose that $a \in \tilde{L}^m$, 
such that $a$ is an image apex of $H$ over $K$. 

Then there are $\mu_i \in \tilde{L}$ which are linearly independent over $K$, 
such that $\mu_1 + \mu_2 + \cdots + \mu_r = 1$,
$$
a = \mu_1 a^{(1)} + \mu_2 a^{(2)} + \cdots + \mu_r a^{(r)}
$$
and $a^{(i)} \in K^m$ is an image apex of $H$ over $\tilde{L}$ for each $i$.
\end{theorem}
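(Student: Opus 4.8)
The plan is to reduce this statement to the projective case, Theorem~\ref{piaLK}, via the homogenization of Proposition~\ref{hmgapexred}. Introduce a fresh variable $t$ and set $\hat H := (tH,t) \in L[x,t]^{m+1}$. Since $a$ is an image apex of $H$ over $K$, Proposition~\ref{hmgapexred}(i) shows that $\hat p := (a,1) \in \tilde L^{m+1}$ is a projective image apex of $\hat H$ over $K$, and of course $\hat p \ne 0$. Because $t$ is transcendental over $L(x)$ (hence over $K(H)$), one has $\trdeg_K K(\hat H) = \trdeg_K K(H) + 1$ and $\trdeg_L L(\hat H) = \trdeg_L L(H) + 1$, so the hypothesis $\trdeg_K K(H) \le \trdeg_L L(H)$ gives $\trdeg_K K(\hat H) \le \trdeg_L L(\hat H)$. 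Thus all hypotheses of Theorem~\ref{piaLK} are met for $\hat H$ and $\hat p$ (with $m$ replaced by $m+1$ and $x$ by $(x,t)$, and with the same $K$, $L$, $\tilde L$).

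The key point is to feed Theorem~\ref{piaLK} a decomposition of $\hat p$ that already has the shape demanded by the conclusion. Let $V \subseteq \tilde L$ be the $K$-linear span of $\{1, a_1, a_2, \ldots, a_m\}$; this is a finite-dimensional $K$-vector space containing $1$. I would then choose a $K$-basis $\mu_1, \mu_2, \ldots, \mu_r$ of $V$ with $\mu_1 + \mu_2 + \cdots + \mu_r = 1$. This is elementary: starting from any basis $v_1, \ldots, v_r$ of $V$, write $1 = \beta_1 v_1 + \cdots + \beta_r v_r$ with $\beta_i \in K$ not all zero, reorder so that $\beta_r \ne 0$, and replace $v_r$ by $1 - (v_1 + \cdots + v_{r-1})$; the resulting $r$-tuple still spans $V$ and hence is a basis, and its sum is $1$. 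Since each entry $a_\ell$ lies in $V$, we may write $a = \mu_1 a^{(1)} + \mu_2 a^{(2)} + \cdots + \mu_r a^{(r)}$ with $a^{(i)} \in K^m$, and therefore
$$
\hat p = (a,1) = \mu_1 (a^{(1)},1) + \mu_2 (a^{(2)},1) + \cdots + \mu_r (a^{(r)},1),
$$
a representation of $\hat p$ whose coefficients $\mu_1,\ldots,\mu_r$ are linearly independent over $K$ and whose vectors $(a^{(i)},1)$ lie in $K^{m+1}$.

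Now apply Theorem~\ref{piaLK} to this representation: each $(a^{(i)},1)$ is a projective image apex of $\hat H$ over $\tilde L$ or zero. Since $(a^{(i)},1) \ne 0$, it is in fact a projective image apex of $\hat H = (tH,t)$ over $\tilde L$, and Proposition~\ref{hmgapexred}(i), applied with $\tilde L$ in the role of the base field, translates this back into the statement that $a^{(i)}$ is an image apex of $H$ over $\tilde L$. Combined with $a = \sum_{i=1}^r \mu_i a^{(i)}$, the linear independence of the $\mu_i$ over $K$, and $\sum_{i=1}^r \mu_i = 1$, this is precisely the assertion to be proved.

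I do not expect a serious obstacle here. The only steps needing genuine care are (a) the construction of the basis $\mu_1,\ldots,\mu_r$, which must simultaneously sum to $1$, span a space containing all the $a_\ell$, and be $K$-linearly independent; and (b) the bookkeeping of the homogenization --- the two transcendence-degree identities, the fact that $(a^{(i)},1)$ never vanishes, and the back-and-forth passage of ``image apex'' between $H$ and $\hat H$ and between $K$ and $\tilde L$ through Proposition~\ref{hmgapexred}. Both are routine once the setup above is in place.
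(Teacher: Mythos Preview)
Your proof is correct and follows essentially the same route as the paper: homogenize via Proposition~\ref{hmgapexred} to pass from the image apex $a$ to the projective image apex $(a,1)$ of $(tH,t)$, then invoke Theorem~\ref{piaLK}. The only difference is that the paper applies Theorem~\ref{piaLK} to an arbitrary $K$-linear decomposition of $(a,1)$ and afterwards adjusts the last coordinates to $1$ by a change of basis $T\in\GL_r(K)$ (using Theorem~\ref{piacalc}(iv) to keep the transformed vectors projective image apices), whereas you build the decomposition with last coordinates $1$ from the start by choosing a $K$-basis of $\operatorname{span}_K\{1,a_1,\ldots,a_m\}$ summing to $1$; this slightly streamlines the argument and avoids the explicit appeal to Theorem~\ref{piacalc}.
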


\begin{proof}
From proposition \ref{hmgapexred}, it follows
that $p := (a,1)$ is a projective image apex of $\tilde{H} := (x_{n+1} H, x_{n+1})$. 
Take $\lambda_i$ and $p^{(i)}$ as in theorem \ref{piaLK}, except that $p^{(i)} \in K^{n+1}$
instead of $K^n$. Then $\lambda_1, \lambda_2, \ldots, \lambda_r$ are linearly independent over $K$, 
$p = \lambda_1 p^{(1)} + \lambda_2 p^{(2)} + \cdots + \lambda_r p^{(r)}$ and $p^{(i)} \in K^{n+1}$ 
is a projective image apex of $\tilde{H}$ over $L'$.

Since the last coordinate of $p$ is nonzero, the last row of 
$\big(p^{(1)} \,\big|\, p^{(2)} \,\big|\, \cdots \,\big|\, p^{(r)}\big)$ is nonzero as well, so there
exists a $T \in \GL_r(K)$ such that the last row of 
$$
\big(q^{(1)} \,\big|\, q^{(2)} \,\big|\, \cdots \,\big|\, q^{(r)}\big) := 
\big(p^{(1)} \,\big|\, p^{(2)} \,\big|\, \cdots \,\big|\, p^{(r)}\big) \cdot T
$$
equals $(1^1 ~ 1^2 ~ \cdots ~ 1^r)$. Furthermore, we deduce from (iv) of theorem \ref{piacalc}
that $q^{(i)}$ is a projective image apex of $\tilde{H}$ for each $i$ as well.

Define $\mu := T^{-1} \lambda$. Then $\mu_1 + \mu_2 + \cdots + \mu_r = 1$ indeed, 
because the last coordinate of $q^{(i)}$ equals $1$ for each $i$, as well as the last
coordinate of 
\begin{align*}
p &= \big(p^{(1)} \,\big|\, p^{(2)} \,\big|\, \cdots \,\big|\, p^{(r)}\big) T \cdot T^{-1} \lambda \\
  &= \big(q^{(1)} \,\big|\, q^{(2)} \,\big|\, \cdots \,\big|\, q^{(r)}\big) \cdot \mu \\
  &= \mu_1 q^{(1)} + \mu_2 q^{(2)} + \cdots + \mu_r q^{(r)}
\end{align*}
Furthermore, if we define $a^{(i)}$ by $(a^{(i)},1) := q^{(i)}$, then 
$a = \mu_1 a^{(1)} + \mu_2 a^{(2)} + \cdots + \mu_r a^{(r)}$, and we deduce from 
proposition \ref{hmgapexred} that $a^{(i)}$ is an image apex of $H$ for each $i$.
\end{proof}

\begin{theorem} \label{xsc}
Let $K$ be a field of characteristic zero, which is algebraically closed in $L$.
Let $H \in L[x]^m$.
\begin{enumerate}[\upshape (i)]

\item Suppose that $H$ has a (projective) image apex $p \in L^m$ over $K$, but not 
a (projective) image apex $p^{(1)} \in K^m$ over $K$. Then
$$
\trdeg_L L(H) \le \trdeg_K K(H) - 1
$$

\item Suppose that $H$ has a (projective) image apex $a \in L^m$ over $K$, which 
is not a (projective) image apex over $L$. Then
$$
\trdeg_L L(H) \le \trdeg_K K(H) - 1
$$

\item Suppose that $H$ has a projective image apex $p \in L^m$ over $K$, but not 
a projective image apex $p^{(1)} \in K^m$ over $K$. 
Suppose that $H$ has a (projective) image apex $a \in L^m$ over $K$, which is not a 
(projective) image apex over $L$. Then 
$$
\trdeg_L L(H) \le \trdeg_K K(H) - 2
$$

\item Suppose that $H$ has an image apex $b \in L^m$ over $K$, but not 
an image apex $b^{(1)} \in K^m$ over $K$.
Suppose that $H$ has a (projective) image apex $a \in L^m$ over $K$, which is not a
(projective) image apex over $L$. Then 
$$
\trdeg_L L(H) \le \trdeg_K K(H) - 2
$$

\end{enumerate}
\end{theorem}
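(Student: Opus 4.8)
The plan is to deduce part (i) directly from Theorems~\ref{piaLK} and~\ref{iaLK}, and then to derive parts (ii)--(iv) from (i) by passing to a suitable auxiliary map. The running hypothesis $\trdeg_K K(H)\le\trdeg_L L(H)$ of those two theorems amounts, by the always-valid reverse inequality, to $\trdeg_K K(H)=\trdeg_L L(H)$; so I write $\delta(H):=\trdeg_K K(H)-\trdeg_L L(H)\ge 0$ and argue by contradiction in each part.

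\emph{Part (i).} Suppose $\delta(H)=0$. The given apex over $L$ is automatically an apex over $K$, so Theorem~\ref{piaLK} (projective case) or Theorem~\ref{iaLK} (image apex case) applies to it with $\tilde L=L$. Decomposing the apex as $\sum_i\lambda_i q^{(i)}$ with $\lambda_1,\dots,\lambda_r\in L$ linearly independent over $K$ and $q^{(i)}\in K^m$ (with $\sum_i\lambda_i=1$ in the image apex case, as the theorem provides), we obtain that each $q^{(i)}$ is an apex of $H$ over $\tilde L=L$, or zero in the projective case. As the apex is nonzero, some $q^{(i)}\in K^m$ is then a nonzero projective image apex of $H$ over $L$, hence over $K$; and in the image apex case every $q^{(i)}\in K^m$ is an image apex of $H$ over $K$. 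Either way this contradicts the hypothesis, so $\delta(H)\ge 1$.

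\emph{The auxiliary map.} Let $a\in L^m$ be a (projective) image apex of $H$ over $K$ that is not one over $L$, and set $H':=(1-t)H+ta$, respectively $H':=H+ta$, in $L[x,t]^m$ for a fresh variable $t$. Then, first, by Proposition~\ref{algdepprop} the $K$-apex property of $a$ gives $\trdeg_K K(H')=\trdeg_K K(H)$, while the failure of the $L$-apex property gives $\trdeg_L L(H')\neq\trdeg_L L(H)$; together with $\trdeg_L L(H)\le\trdeg_L L(H')\le\trdeg_L L(H',t)=\trdeg_L L(H)+1$ this pins $\trdeg_L L(H')$ to $\trdeg_L L(H)+1$, so $\delta(H')=\delta(H)-1$. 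Second, if $q\in K^m$ is a (projective) image apex of $H'$ over $K$, then for each $f\in K[y]$ with $f(H)=0$ the $K$-apex property of $a$ gives $f(H')=0$, hence $f$ vanishes at the corresponding $q$-shift of $H'$, and substituting $t=0$ shows that $q$ is a (projective) image apex of $H$ over $K$. Third, by the forward direction of Corollary~\ref{tucor}(iii) or~(iv) applied over $L$, every (projective) image apex of $H$ over $L$ is also one of $H'$ over $L$.

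\emph{Parts (ii)--(iv).} Part (ii) is immediate from the first fact: $0\le\delta(H')=\delta(H)-1$. For part (iii), take $a$ to be the given $K$-apex, splitting into the two subcases according to whether it is an image apex or a projective image apex over $K$. By the second fact together with the hypothesis that $H$ has no projective image apex in $K^m$ over $K$, the map $H'$ has none either; by the third fact the given projective image apex $p$ of $H$ over $L$ is one of $H'$ over $L$. Hence part~(i), applied to $H'$, gives $\delta(H')\ge1$, so $\delta(H)=\delta(H')+1\ge2$. Part (iv) is the same argument with the image apex $b$ in place of $p$ and the image apex version of part~(i) applied to $H'$. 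The step to watch is the second fact --- that apices of $H'$ lying in $K^m$ and taken over $K$ descend to apices of $H$ over $K$ --- since that is precisely where the two hypothesized defects are forced to add up; everything else is bookkeeping with Proposition~\ref{algdepprop}, Corollary~\ref{tucor}, and Theorems~\ref{piaLK}/\ref{iaLK}.
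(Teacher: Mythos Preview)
Your proof is correct and follows essentially the same approach as the paper. Both arguments handle (i) by contraposition via Theorems~\ref{piaLK}/\ref{iaLK}, derive (ii) from the transcendence-degree jump in Proposition~\ref{algdepprop}, and prove (iii)--(iv) by passing to the auxiliary map $H'=(1-t)H+ta$ (resp.\ $H+ta$) and reapplying those theorems there; your packaging of the ``three facts'' about $H'$ and subsequent appeal to part~(i) is a mild reorganization of the paper's direct application of Theorem~\ref{piaLK}/\ref{iaLK} to $H'$ followed by the backward direction of Corollary~\ref{tucor}, but the content is identical.
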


\begin{proof}
We assume that $a$ is an image apex, since
that are the cases we need. The other cases are similar.
\begin{enumerate}[\upshape (i)]

\item 
Suppose that $\trdeg_K K(H) \le \trdeg_L L(H)$. From theorem \ref{piaLK}
and theorem \ref{iaLK}, it follows that $H$ has a (projective) image apex 
$p^{(1)} \in K^m$ over $L$, in particular over $K$. Contradiction.

\item 
From $K \subseteq L$ and proposition \ref{trdegprop}, it follows that 
\begin{align} \label{trdegia}
0 &\le \trdeg_K K\big((1-t)H + ta\big) - \trdeg_L L\big((1-t)H + ta\big) \nonumber \\
&= \trdeg_K K(H) - (\trdeg_L L(H) + 1)
\end{align}
which gives (ii).

\item
Suppose that $\trdeg_K K(H) \le \trdeg_L L(H) + 1$. Then \eqref{trdegia} is an 
equality, so 
$$
\trdeg_K K\big((1-t)H + ta\big) = \trdeg_L L\big((1-t)H + ta\big)
$$
From corollary \ref{tucor} and theorem \ref{piaLK}, it follows that $(1-t)H + ta$ 
has a projective image apex $p^{(1)} \in K^m$ over $L$, in particular over $K$. 
Since $a$ is an image apex of $H$ over $K$, we deduce from corollary \ref{tucor} 
that $p^{(1)}$ is a projective image apex of $H$ over $K$ as well. Contradiction.

\item 
The proof is the same as that of (iii), except that theorem \ref{iaLK} is used 
instead of theorem \ref{piaLK}. \qedhere

\end{enumerate}
\end{proof}

\section{Reduction of dimension with inheritance} \label{sec:reddimrplus1}

This section is `over $K$', i.e.\@ \HE-pairing is over $K$, through matrices
over $K$, and (projective) image apices are over $K$, both as such and as vectors.

The purpose of theorem \ref{cominherred} below is to reduce the proof of a reduction from arbitrary dimension to dimension $\rk \jac H + 1$ to a proof of a reduction from dimension $\rk \jac H + 2$ dimension to dimension $\rk \jac H + 1$. Namely, if $m = \rk \jac H + 2$ and $i = 1$, then the proof of the reduction from dimension $\rk \jac H + 2$ dimension to dimension $\rk \jac H + 1$ yields the properties of $h$ and $h'$. So $H$ can be reduced to $H'$, and further reductions follow by induction. 

\begin{theorem} \label{cominherred}
Suppose that $h \in K[x]^m$ and $H \in K[X]^M$ are (symmetrically) \HE-paired
through matrices $B$ and $C$, such that $\rk B = m$.
Suppose that $h' \in K[x_1,x_2,\ldots,x_{n-j}]^{m-i}$ and $h$ are (symmetrically) 
\HE-paired through matrices $B'$ and $C'$, such that every image apex of $h'$ is inherited from $h$ and/or every projective image apex of $h'$ is inherited from $h$. 

Then there exists an $H' \in K[x_1,x_2,\ldots,x_{N-j}]^{M-i}$ such that $H'$ and $H$ 
are (symmetrically) \HE-paired, and every image apex of $H'$ is inherited 
from $H$ and/or every projective image apex of $H'$ is inherited from 
$H$ respectively.
\end{theorem}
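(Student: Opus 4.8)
The plan is to transport the reduction $h \rightsquigarrow h'$ over to the level of $H$ by combining the given dB-pairing $B, C$ between $h$ and $H$ with the dB-pairing, say through matrices $B'$ and $C'$, that realizes $h'$ from $h$. Concretely, $h'$ is obtained from $h$ via $h' = B'(h(C'x))$ with $C' \in \Mat_{n, n-j}(K)$ deleting (after a linear change) the last $j$ variables, and $B' \in \Mat_{m-i, m}(K)$ of rank $m-i$. Composing, one gets $h' = B' B \big( H(C C' x) \big)$, so the natural candidate is to set $\hat{B} := B'B \in \Mat_{m-i, M}(K)$ and $\hat{C} := C C' \in \Mat_{N, n-j}(K)$ and show that $\hat{B} H(\hat{C} X)$, restricted to the relevant $N-j$ variables, is the desired $H'$. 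Property (i) of dB-pairing for the composite is immediate from functoriality of composition; property (ii), the equality of transcendence degrees, follows by sandwiching: $\trdeg_K K(h') \le \trdeg_K K(\hat{B} H) \le \trdeg_K K(H)$ together with $\trdeg_K K(h') = \trdeg_K K(h) = \trdeg_K K(H)$ coming from the two given pairings. The symmetric case is handled by noting $\hat{B} = \hat{C}\tp$ when $B = C\tp$ and $B' = C'\tp$, using $(B'B)\tp = B\tp B'\tp = C C'$.

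The heart of the matter, and the main obstacle, is the inheritance of apices: I must show that every (projective) image apex of $H'$ is inherited from $H$, given only that every such apex of $h'$ is inherited from $h$. The tool for this is Theorem~\ref{cominher}. I would set up a commutative square of factorizations so that $\hat{B} = \hat{B}^{(3)} \hat{B}^{(1)} = \hat{B}^{(4)} \hat{B}^{(2)}$ and $\hat{C} = \hat{C}^{(1)} \hat{C}^{(3)} = \hat{C}^{(2)} \hat{C}^{(4)}$ with one ``branch'' passing through $h$ and the other through an intermediate object on the way to $H'$, and I must arrange $\ker \hat{B}^{(1)} \cap \ker \hat{B}^{(2)} = \{0\}$. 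The cleanest route is to take the branch through $h$ itself: factor $\hat{B} = (B'B)$ as $B' \cdot B$ (so $\hat{B}^{(1)} = B$, with codomain $K^m$, giving $h$ as the intermediate node) and take a second factorization whose middle node is $H'$; since $\rk B = m$, we get $\ker B = \{0\}$, hence a fortiori $\ker \hat{B}^{(1)} \cap \ker \hat{B}^{(2)} = \{0\}$ automatically. Then an apex $q$ of $H'$ is, by hypothesis on $h'$, inherited from $h$ (after transporting through the pairing relating $H'$-apices to $h'$-apices, which are inherited from $h$ by assumption), and Theorem~\ref{cominher} upgrades ``inherited from $B^{(i)}H$ for both $i$'' to ``inherited from $H$.''

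To make this precise I would argue in two moves. First, using Theorem~\ref{cominher} applied to the pairing $(h, H)$ together with the factorization witnessing how $h'$ sits inside $h$, every apex of $h'$ that is inherited from $h$ is inherited from the appropriate ``$B^{(i)}H$'' pieces, hence — composing back — from the map that will become $H'$. Second, I would invoke the rank-$m$ hypothesis on $B$ to kill the intersection-of-kernels condition and run the ``if''-direction of Theorem~\ref{cominher} to conclude $q = \hat{B} Q$ for a genuine (projective) image apex $Q$ of $H$. The bookkeeping that $H'$ actually lies in $K[x_1, \dots, x_{N-j}]$ rather than all of $K[X]$ is handled exactly as in Theorem~\ref{dBpairinga}: the matrix $\hat{C}$ has the block-diagonal shape of \eqref{Cform} (it is a composite of two such shapes), so $\hat{B} H(\hat{C} X)$ does not involve $x_{N-j+1}, \dots, x_N$, and one then checks the transcendence-degree equality survives the restriction, again by the sandwich above. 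I expect the apex-inheritance step to be where essentially all the work lies; the dimensional bookkeeping and the symmetric-case checks should be routine, modulo care in keeping the four factorization matrices in Theorem~\ref{cominher} consistent.
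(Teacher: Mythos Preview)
Your overall strategy of composing the two dB-pairings and then invoking Theorem~\ref{cominher} is exactly the paper's approach, but your proposal contains two genuine errors that make the argument collapse.

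First, a dimension problem. Your candidate $H' = \hat{B}H(\hat{C}X)$ with $\hat{B} = B'B \in \Mat_{m-i,M}(K)$ and $\hat{C} = CC' \in \Mat_{N,n-j}(K)$ lands in $K[x_1,\ldots,x_{n-j}]^{m-i}$, not in $K[x_1,\ldots,x_{N-j}]^{M-i}$ as the theorem requires. In fact your $H'$ is nothing but $h'$ written in different variable labels, so you have not produced a new object at the level of $H$ at all.

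Second, and more seriously, your claim that $\rk B = m$ implies $\ker B = \{0\}$ is false: $B \in \Mat_{m,M}(K)$ with $m \le M$, so $\rk B = m$ means $B$ is \emph{surjective}, and $\dim \ker B = M-m$. With your two factorizations, one branch is $\hat{B}^{(1)} = B$ and the other is $\hat{B}^{(2)} = \hat{B} = B'B$, whence $\ker \hat{B}^{(1)} \cap \ker \hat{B}^{(2)} = \ker B \cap \ker(B'B) = \ker B$, which is nonzero whenever $m < M$. So the hypothesis of the ``if'' direction of Theorem~\ref{cominher} is not available and the inheritance step does not go through.

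The paper repairs both problems in a single stroke. Since $\rk B = m$, one can choose $M-m$ additional row vectors that, together with the rows of $B$, form a basis of $K^M$. Append these rows to $B'B$ to obtain $\tilde{B}' \in \Mat_{M-i,M}(K)$, and similarly pad $CC'$ to $\tilde{C}' \in \Mat_{N,N-j}(K)$ (taking $\tilde{C}' = (\tilde{B}')\tp$ in the symmetric case). Then $H' := \tilde{B}'H(\tilde{C}'X)$ has the correct size, $h'$ and $H'$ are dB-paired via $(I_{m-i}\,|\,0)$, and, crucially, $\ker B \cap \ker \tilde{B}' = \{0\}$ because any vector killed by both $B$ and the complementary rows is killed by a full basis of $(K^M)^{*}$. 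With this corrected setup, Theorem~\ref{cominher} applies exactly as you outlined: an apex $Q'$ of $H'$ projects to an apex $q'$ of $h'$, which by hypothesis is inherited from $h$, and then Propositions~\ref{dBphi} and~\ref{relprop} together with Theorem~\ref{cominher} lift it to an apex of $H$.
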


\begin{proof}
Notice that $h'$ and $H$ are \HE-paired through $B'B$ and $CC'$. 
Since $\rk B = m$, we can add $M-m$ rows to $B$ to obtain an element of $\GL_M(K)$.
But instead of doing that, we add these rows to $B'B$, to obtain a matrix 
$\tilde{B}' \in \Mat_{M-i,M}(K)$. In the symmetric case, we take 
$\tilde{C}' = (\tilde{B}')\tp$, otherwise we make $\tilde{C}' \in \Mat_{N,N-j}(K)$ 
by adding zero or more arbitrary columns to $CC'$.

Take $H' := \tilde{B}'H(\tilde{C}'x)$. Since $h'$ and $H$ are \HE-paired over $K$
through $B'B$ and $CC'$, one can deduce that $h'$ and $H'$ are 
\HE-paired through $\tilde{B} := (I_{m-i} ~ 0)$ and $\tilde{C} := (I_{n-j} ~ 0)\tp$ 
and that $H'$ and $H$ are \HE-paired through $\tilde{B}'$ and $\tilde{C}'$.
Furthermore, $\tilde{B}\tilde{B}' = B'B$ and $\tilde{C}'\tilde{C} = CC'$.
\begin{center}
\begin{tikzpicture}[size/.style={inner sep=0pt,minimum size=#1}]
\draw (-90:2) node[shape=circle,fill=black!15,size=8mm,draw] {$h'$} -- 
      node[left,fill=white,draw] {$\tilde{B}$} 
      node[right,fill=white,draw] {$\tilde{C}$}
      (10:2) node[shape=circle,fill=black!15,size=8mm,draw] {$H'$} --
      node[left,fill=white,draw] {$\tilde{B}'$} 
      node[right,fill=white,draw] {$\tilde{C}'$}
      (90:2) node[shape=circle,fill=black!15,size=8mm,draw] {$H$} --
      node[left,fill=white,draw] {$B$} 
      node[right,fill=white,draw] {$C$}
      (-170:2) node[shape=circle,fill=black!15,size=8mm,draw] {$h$} -- 
      node[left,fill=white,draw] {$B'$} 
      node[right,fill=white,draw] {$C'$}
      (-90:2) -- (90:2);
\end{tikzpicture}
\end{center}
Suppose that $Q'$ is a (projective) image apex of $H'$ over $K$.
Then $q' := \tilde{B}Q'$ is a (projective) image apex of $h'$ over $K$. 
By assumption, $q'$ is inherited from $h$. 
So there exists a (projective) image apex $q$ of $h$, such that $q' = B'q$. 

From propositions \ref{HEphi} and \ref{relprop}, it follows that $Q'$ and $q$ are
(projective) image apices of $\tilde{B}'H$ and $BH$ respectively as well.
Hence we can apply theorem \ref{cominher}, to deduce that there exists a (projective) 
image apex $Q$ of $H$ such that $\tilde{B}'Q = Q'$. So $Q'$ is inherited from $H$. 
\end{proof}

\begin{theorem} \label{Mred}
Let $K$ be a field of characteristic zero and $H \in K[x]^M$, such that
$\trdeg_K K(H) = M - 2$. Suppose that $0 \le s < k < M$ and that the polynomial
map $(H_{s+1}, H_{s+2}, \ldots, H_k)$ does not have a projective image apex.
Suppose additionally that either {\upshape (i)} or {\upshape (ii)} is
satisfied.
\begin{enumerate}[\upshape (i)]
 
\item $(H_{k+1}, H_{k+2}, \ldots, H_M)$ does not have a projective image apex. 

\item $(H_{k}, H_{k+1}, H_{k+2}, \ldots, H_M)$ does not have a projective image 
apex and $H$ has an image apex. 

\end{enumerate}
Let
$$
h = (H_1, H_2, \ldots, H_s, H_{s+1} + H_M, H_{s+2}, H_{s+3}, \ldots, H_{M-1})
$$
Then for both $H$ and $h$, the projective image apices are exactly those
nonzero vectors which are generated by $e_1, e_2, \ldots, e_s$. Furthermore,
$h$ and $H$ are \HE-paired and the projective image apices of $h$ are inherited 
from those of $H$.
\end{theorem}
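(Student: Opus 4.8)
The plan is to rephrase everything in terms of the $K(x)$-column space of the Jacobian $\jac H$. Since $K$ has characteristic zero, for any $F\in K[x]^{\ell}$ one has $\trdeg_K K(F)=\rk\jac F$, and, for constant $p\in K^{\ell}$, $\trdeg_K K(F+tp)=\rk(\jac F\mid p)$ (the $\partial/\partial t$-column being $p$); together with Proposition~\ref{algdepprop}(ii) this says that a nonzero $p$ is a projective image apex of $F$ precisely when $p$ lies in the $K(x)$-column space of $\jac F$. Two consequences will be used throughout: (a) the projection of a projective image apex of $F$ onto any coordinate subset $S$ is a projective image apex of $(F_i)_{i\in S}$ or zero; and (b) an algebraically independent tuple has a projective image apex (any $e_j$, by Lemma~\ref{upia}), so a tuple with no projective image apex has transcendence degree strictly below its number of entries.

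Write $h=BH$, where $B\in\Mat_{M-1,M}(K)$ is the identity except that its $(s+1)$st row is $e_{s+1}\tp+e_M\tp$; then $\rk B=M-1$, $\ker B=K(e_{s+1}-e_M)$, and $Be_i=e_i$ for $i\le s$. If $p$ is a projective image apex of $H$, applying (a) with $S=\{s+1,\dots,k\}$ and then with $S=\{k+1,\dots,M\}$ in case (i), or with $S=\{k,\dots,M\}$ in case (ii), the hypotheses force $p_{s+1}=\cdots=p_M=0$; so every projective image apex of $H$ is generated by $e_1,\dots,e_s$. In particular $e_{s+1}-e_M$ is a nonzero constant vector not in the $K(x)$-column space of $\jac H$, hence $\rk(B\,\jac H)=\rk\jac H=M-2$, i.e.\ $\trdeg_K K(h)=\trdeg_K K(H)$, and $h$ and $H$ are dB-paired through $B$ and the identity.

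Next I would show $e_1,\dots,e_s$ are projective image apices of $H$, i.e.\ (Lemma~\ref{upia}) that each $H_i$ with $i\le s$ is algebraically independent over $K$ of the remaining components. In case (i), (b) gives $\trdeg_K K(H_{s+1},\dots,H_k)\le k-s-1$ and $\trdeg_K K(H_{k+1},\dots,H_M)\le M-k-1$, and subadditivity of transcendence degree over the partition $\{1,\dots,s\}\sqcup\{s+1,\dots,k\}\sqcup\{k+1,\dots,M\}$ yields $M-2=\trdeg_K K(H)\le s+(k-s-1)+(M-k-1)=M-2$; equality forces $H_1,\dots,H_s$ to remain algebraically independent over $K(H_{s+1},\dots,H_M)$, so each such $e_i$ is a projective image apex of $H$. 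In case (ii), one first translates $H$ by its image apex $a$ --- by Proposition~\ref{apexmap} this changes neither the projective image apices of $H$ nor those of its submaps, and replaces $h$ by the corresponding translate --- so that $(f\in K[y]\mid f(H)=0)$ becomes homogeneous, and runs the analogous count, the point being that homogeneity must supply the second relation among $H_k,\dots,H_M$ needed to make the partition $\{1,\dots,s\}\sqcup\{s+1,\dots,k-1\}\sqcup\{k,\dots,M\}$ tight. I expect this case-(ii) count --- extracting the missing relation out of the image apex --- to be the main obstacle. With it, Theorem~\ref{piacalc}(iv) together with the inclusion of the previous paragraph shows that the projective image apices of $H$ are exactly the nonzero combinations of $e_1,\dots,e_s$, and Proposition~\ref{apexmap}(ii) with $Be_i=e_i$ shows that every such combination is also a projective image apex of $h$.

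It remains to show every projective image apex $q$ of $h$ is inherited from $H$; this also completes the description of the projective image apices of $h$, since an inherited one equals $Bp$ for a projective image apex $p$ of $H$, hence is generated by $e_1,\dots,e_s$. Lift $q$ to $\hat q\in K^M$ with $B\hat q=q$ (pad by a zero in coordinate $M$); then $h+tq=B(H+t\hat q)$, so $\trdeg_K K(B(H+t\hat q))=M-2$ while $\trdeg_K K(H+t\hat q)\in\{M-2,M-1\}$. In the first case $\hat q$ is a projective image apex of $H$ (or zero), hence generated by $e_1,\dots,e_s$, and so is $q=B\hat q$, inherited from $\hat q$. In the second case $\hat q$ is not in the $K(x)$-column space of $\jac H$ and $B$ dropped the transcendence degree, which forces $e_{s+1}-e_M$ into the $K(x)$-column space of $(\jac H\mid\hat q)$; thus $e_{s+1}-e_M=\mu\hat q+w$ with $w$ in the $K(x)$-column space of $\jac H$ and $\mu\in K(x)$ uniquely determined, and pairing this with a basis of the two-dimensional left null space of $\jac H$ over $K(x)$ gives a formula for $\mu$. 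One then argues --- exactly as in the case-(ii) count, using the hypothesis on $H_{k+1},\dots,H_M$, resp.\ the homogeneity, to see that the pertinent coordinate ratio of the null vectors is constant --- that $\mu\in K$; then $\hat p:=-\mu^{-1}w=\hat q-\mu^{-1}(e_{s+1}-e_M)$ is a constant vector in the $K(x)$-column space of $\jac H$ with $B\hat p=q$, so $q$ is inherited from the projective image apex $\hat p$. This `$\mu\in K$' step, parallel to the case-(ii) count, is the remaining delicate point.
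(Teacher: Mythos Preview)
Your outline for the first half---identifying the projective image apices of $H$ and establishing the dB-pairing---is essentially correct, but your case~(ii) count is more complicated than needed and the ``homogeneity must supply the second relation'' step is not how the paper proceeds. The paper simply observes (via Lemma~\ref{upia}) that since $(H_{s+1},\dots,H_k)$ has no projective image apex, $H_{s+1}$ is algebraic over $K(H_{s+2},\dots,H_k)$, and since $(H_k,\dots,H_M)$ (in case~(ii)) or $(H_{k+1},\dots,H_M)$ (in case~(i)) has no projective image apex, $H_M$ is algebraic over $K(H_k,\dots,H_{M-1})$ or $K(H_{k+1},\dots,H_{M-1})$ respectively. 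In either case both $H_{s+1}$ and $H_M$ are algebraic over $K(H_{s+2},\dots,H_{M-1})$, so $H_1,\dots,H_s,H_{s+2},\dots,H_{M-1}$ is a transcendence basis of $K(H)$; this immediately gives that each $H_i$ with $i\le s$ is independent of the rest, without ever invoking the image apex. The image apex in case~(ii) is used only later, for the inheritance claim.

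The genuine gap is in your inheritance argument. Your reduction to the question ``is $\mu\in K$?'' is correct, but the assertion that this follows ``exactly as in the case-(ii) count'' or by examining coordinate ratios of null vectors is not substantiated, and I do not see how to make it work along those lines: if $F$ and $\tilde F$ are the two natural null vectors (supported on $\{s+1,\dots,k\}$ and on $\{k+1,\dots,M\}$ respectively, in case~(i)), your formula $\mu=F_{s+1}/(F\tp\hat q)=-\tilde F_M/(\tilde F\tp\hat q)$ involves ratios of polynomials in $x$ with no evident reason to be constant. The paper's technique is entirely different. It introduces a second copy $\tilde x$ of the variables and builds an auxiliary map $G$ by evaluating $H_1,\dots,H_k$ at $x$ and $H_{k+1},\dots,H_M$ at $\tilde x$ (in case~(i); case~(ii) uses a multiplicative variant exploiting the image apex at the origin). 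One checks that $G$ has the same relations over $K$ as $H$, hence the corresponding $g$ has the same relations as $h$, so $p$ is a projective image apex of $g$ as well. The payoff of the doubling is that the null vectors of $\jac_X g$ now split into an $x$-part (from $F$) and an $\tilde x$-part (from $\tilde F$); one then \emph{specializes} $\tilde x$ to a point of $K^n$, turning the $\tilde x$-part into constants, and the resulting relation $\tilde f\tp p=0$ forces a $K$-linear dependence among $F_{s+1},\dots,F_k$, contradicting the hypothesis that $(H_{s+1},\dots,H_k)$ has no projective image apex. This variable-doubling plus specialization is the key idea your proposal is missing.
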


\begin{proof}
Suppose that $P$ is a projective image apex of $H$. From (ii) of proposition 
\ref{apexmap}, it follows that $P_{s+1} = P_{s+2} = \cdots = P_k = 0$
and $(P_k =)\, P_{k+1} = P_{k+2} = \cdots = P_M = 0$. So $P$ is 
a linear combination of $e_1, e_2, \ldots, e_s$ indeed.

Since $(H_{s+1}, H_{s+2}, \ldots, H_k)$ does not have a projective image apex,
it follows from lemma \ref{upia} that $H_{s+1}$ is algebraically dependent over $K$ on 
$H_{s+2}, H_{s+3}, \allowbreak \ldots, H_k$. Since $((H_k,)\, H_{k+1}, H_{k+2}, 
\ldots, H_M)$ does not have a projective image apex, it follows from lemma \ref{upia} that 
$H_M$ is algebraically dependent over $K$ on $(H_k,)\, H_{k+1}, H_{k+2}, \ldots, H_{M-1}$. 
So 
\begin{equation} \label{trdbas}
\trdeg_K K(H_1,\ldots,H_s,H_{s+2},\ldots,H_{M-1}) = \trdeg_K K(H) = M-2
\end{equation}
It follows that $H_1,\ldots,H_s,H_{s+2},\ldots,H_{M-1}$ is a transcendence basis 
of $K(H)$ over $K$. So $h$ and $H$ are \HE-paired. Furthermore, 
$\trdeg_K K(H_{s+1}, H_{s+2}, \allowbreak \ldots, \allowbreak H_k) \ge k - s - 1$, because 
$k \le M-1$. From lemma \ref{upia}, it follows that $\trdeg_K K(H_{s+1}, \allowbreak
H_{s+2}, \ldots, H_k) = k - s - 1$.

Since $H_M$ and $H_{s+1}$ are algebraically dependent over $K$ on 
$H_{s+2}, H_{s+3}, \ldots, \allowbreak H_{M-1}$, it follows from \eqref{trdbas} that
for every $i \le s$, $H_i$ is algebraically independent over $K$ of 
$H_1, H_2, \ldots,H_{i-1},H_{i+1},H_{i+2},\ldots,H_M$. On account of lemma \ref{upia},
$e_i$ is a projective image apex of $H$ for each $i \le s$. It follows from
(iv) of theorem \ref{piacalc} that every nontrivial linear combination of
$e_1, e_2, \ldots, e_s$ is a projective image apex of $H$.

So we have proved that the projective image apices of $H$ are exactly those
nonzero vectors which are generated by $e_1, e_2, \ldots, e_s$.
Since $h$ and $H$ are \HE-paired, we deduce from (ii) of proposition \ref{inherprop}
that every nontrivial linear combination of $e_1, e_2, \ldots, e_s$ is a projective 
image apex of $h$.

Let $m = M-1$ and suppose that $p$ is a projective image apex of $h$. 
In order to prove that for $h$, the projective image apices are exactly those
nonzero vectors which are generated by $e_1, e_2, \ldots, e_s$ it suffices to show
that $p_{s+1} = p_{s+2} = \cdots = p_m = 0$. The last claim of this theorem
will then be clear as well, so it remains to show that 
$p_{s+1} = p_{s+2} = \cdots = p_m = 0$.

Since $\trdeg_K K(H_{s+1}, H_{s+2}, \ldots, H_k) = k - s - 1$, it follows from 
lemma \ref{pquasi} that there exist $F_{s+1}, F_{s+2}, \ldots, F_k \in K[x]$,
such that $F\tp \cdot \jac H = 0$, where
$$
F = (0^1,0^2,\ldots,0^s,F_{s+1},F_{s+2},\ldots,F_k,0^{k+1},0^{k+2},\ldots,0^M)
$$
As $(H_{s+1}, H_{s+2}, \ldots, H_k)$ does not have a projective image apex, we can
use (4) $\Rightarrow$ (1) of lemma \ref{pquasi} to deduce that 
$F_{s+1}, F_{s+2}, \ldots, F_k \in K[x]$ are linearly independent over $K$,

Let $N = 2n$ and write $\tilde{x} = x_{n+1},x_{n+2},\ldots,x_{2n}$.
\begin{enumerate}[\upshape (i)]

\item Define 
$$
G := \big(H_1(x),H_2(x),\ldots,H_k(x),
H_{k+1}(\tilde{x}),H_{k+2}(\tilde{x}),\ldots,H_M(\tilde{x})\big)
$$
Then $f(G) = 0 \Longrightarrow f(H) = 0$ for every $f \in K[Y]$. Since $G_{s+1}$ and
$G_M$ are algebraically dependent over $K$ on $G_{s+2}, G_{s+3}, \ldots, G_{M-1}$, 
we can deduce from $\trdeg_K K(H) = M - 2$ that $f(G) = 0 \Longleftrightarrow 
f(H) = 0$ for every $f \in K[Y]$.
Let 
$$
g = (G_1, G_2,\ldots,G_s,G_{s+1}+G_M,G_{s+2},G_{s+3},\ldots,G_m)
$$
Then $f(g) = 0 \Longleftrightarrow f(h) = 0$ for every $f \in K[y]$.

Just like $\trdeg_K K(H_{s+1}, H_{s+2}, \ldots, H_k) = k - s - 1$, we can show that
$\trdeg_K K(H_{k+1}, H_{k+2}, \ldots, H_M) = M - k - 1$. Furthermore, we can show 
that there are $\tilde{F}_{k+1}, \tilde{F}_{k+2}, \ldots, \tilde{F}_M \in K[x]$ which 
are linearly independent over $K$, such that $\tilde{F}\tp \cdot \jac H = 0$, where
$$
\tilde{F} = (0^1,0^2,\ldots,0^s,0^{s+1},0^{s+2},\ldots,0^k,
\tilde{F}_{k+1},\tilde{F}_{k+2},\ldots,\tilde{F}_M)
$$
Let $\tilde{f} \in K(x)^m$ be the vector consisting of the first $m = M-1$ coordinates of 
$F_{s+1}(x)^{-1} F(x) + \tilde{F}_M(\tilde{x})^{-1} \tilde{F}(\tilde{x})$. 
As $\tilde{f}_{s+1} = 1$, it follows that
$$
\tilde{f}\tp \cdot \jac_X g = (\tilde{f},1)\tp \cdot \jac_X G
= \big(F_{s+1}^{-1} \cdot F\tp \cdot \jac H\big) + 
   \big(\tilde{F}_M^{-1} \cdot \tilde{F}\tp \cdot \jac H\big)\big|_{x=\tilde{x}} = 0 
$$
Hence it follows from (1) $\Rightarrow$ (4) of lemma \ref{pquasi} that 
$\tilde{f}\tp \cdot p = 0$. 

Now suppose that $p_i \ne 0$ for some $i \ge s+1$.
If $p_{s+1} \ne 0$, then there exists an $i > s+1$ such that $p_i \ne 0$, because 
$\tilde{f}_{s+1} = 1 \ne 0$. Hence we can take $i \ge s+2$. We may assume 
without loss of generality that $s+2 \le i \le k$, because the case $k+1 \le i \le M-1$
is similar.

Take $\theta \in K^n$ such that $\tilde{F}_M(\theta) \ne 0$. 
Then each component of $\tilde{F}_M(\theta)^{-1} \tilde{F}(\theta)$ 
is linearly dependent over $K$ on $\tilde{f}_{s+1} = 1$. From 
$\tilde{f}|_{\tilde{x}=\theta}\tp \cdot p = 0$, it follows that
$F_{s+1}^{-1} F_i$ is linearly dependent over $K$ on the other
nonzero components of $F_{s+1}^{-1} F$. Contradiction, so 
$p_{s+1} = p_{s+2} = \cdots = p_m = 0$.

\item Let $a$ be an image apex of $H$. Suppose first that $k = s + 1$.
Then $(H_k)$ does not have a projective image apex, so $H_k \in K$ on account of lemma 
\ref{upia}. Furthermore, 
$(H_k, H_{k+1}, \ldots, H_M)$ does not have a projective image apex. So it follows 
from (ii) of proposition \ref{apexmap}, with $c = (H_k,0^{k+1},0^{k+2},\ldots,0^M)$,
that $H_{k+1}, H_{k+2}, \ldots, H_M$ does not have a projective image apex either. 
Hence (i) is satisfied and the claims follow.

Suppose next that $k > s + 1$. From (i) of lemma \ref{apexmap}, it follows that
the zero vector is an image apex of $H-a$. Define
\begin{multline*}
G := \Big(\big(H_1(x)-a_1,H_2(x)-a_2,\ldots,H_k(x)-a_k\big)\cdot\big(H_k(\tilde{x})-a_k\big), \\
          \big(H_k(x)-a_k\big) \cdot \big(H_{k+1}(\tilde{x})-a_{k+1},\ldots,H_M(x)-a_M\big)\Big)
\end{multline*}
We show that 
\begin{equation} \label{fga}
f(G) = 0 \Longrightarrow f(H-a) = 0
\end{equation}
So assume that $f(G) = 0$. Let 
$$
\gamma := \big((y_1,y_2,\ldots,y_k)\cdot y_k,y_k\cdot(y_{k+1},y_{k+2},\ldots,y_M)\big)
$$
Then $f\big(\gamma(H-a)\big) = 0$ follows by substituting $\tilde{x} = x$ in $f(G) = 0$. 
Since $a$ in an image apex of $H$, it follows that $f\big((1-t)^2 \gamma(H-a)\big) = 
f\big(\gamma\big((1-t) \bcdot (H-a)\big)\big) = 0$,
which gives $f(H-a) = 0$ after substituting $t$ by $1 - (\sqrt{H_k - a_k})^{-1}$.

If $G_{s+1}$ and $G_M$ are algebraically dependent over $K$ on $G_{s+2},
\ldots, G_{M-1}$, then we can deduce from $\trdeg_K K(H) = M - 2$ that 
$f(G) = 0 \Longleftrightarrow f(H) = 0$ for every $f \in K[Y]$. So let us show
for instance that $G_{s+1}$ is algebraically dependent over $K$ on $G_{s+2},
\ldots, G_{M-1}$. 

Since $H_{s+1}$ is algebraically dependent over $K$ on 
$H_{s+2}, \ldots, H_k$, there exists a nonzero $f \in K[y_{s+1},y_{s+2},\ldots,y_k]$
such that $f(H-a) = 0$. Since the zero vector is an image apex of $H-a$, we can 
subsitute $t = 1 - \big(H_k(\tilde{x}) - a_k\big)$ to obtain $f(G) = 0$,
because $f \in K[y_{s+1},y_{s+2},\ldots,y_k]$.

Since $H_{s+2}-a_{s+2},H_{s+3}-a_{s+3},\ldots,H_{M-1}-a_{M-1}$ are algebraically 
independent over $K$, it follows from \eqref{fga} that 
$G_{s+2},G_{s+3},\ldots,G_{M-1}$ are algebraically independent over $K$. 
As $f \in K[y_{s+1},y_{s+2},\ldots,y_{M-1}]$, we deduce that $G_{s+1}$ is algebraically 
dependent over $K$ on $G_{s+2}, G_{s+3}, \ldots, G_{M-1}$ indeed.

So $f(G) = 0 \Longleftrightarrow f(H-a) = 0$ for every $f \in K[Y]$. 
Let 
$$
g = (G_1, G_2,\ldots,G_s,G_{s+1}+G_M,G_{s+2},G_{s+3},\ldots,G_m)
$$
and $\tilde{a} = (a_1, a_2,\ldots, a_s, a_{s+1} + a_M, a_{s+2}, a_{s+3}, \ldots, a_{M-1})$.
Then $f(g) = 0 \Longleftrightarrow f(h-\tilde{a}) = 0$ for every $f \in K[y]$.
The rest of the proof is similar to that of (i). \qedhere
\end{enumerate}
\end{proof}

\begin{proposition} \label{HEredb}
Let $K$ be a field of characteristic zero and $H \in K[X]^M$.
Suppose that $\rk \jac H < m \le M$ and let $s$ be the number of independent 
projective image apices of $H$. Then there exists an $h \in K[X]^m$, 
such that $h$ and $H$ are \HE-paired, in such a way that every projective 
image apex of $h$ is inherited, in the following cases:
\begin{enumerate}[\upshape (i)]

\item $\rk \jac H \le s + 2$;

\item $H$ has an image apex and $\rk \jac H \le s + 3$.
      
\end{enumerate}
\end{proposition}

\begin{proof}
Assume without loss of generality that $p$ is a projective image apex of $H$, if and only
if $p$ is a nontrivial linear combination of $e_1, e_2, \ldots, e_s$. By induction, we may
assume that $m = M - 1$. Since $\rk \jac H < m = M - 1$, it follows that
$M \ge M' := \rk \jac H + 2$. 

From (iii) of proposition \ref{nopia},
it follows that we may assume without loss of generality that $H_1, H_2, \ldots, H_{M'-2}$ 
are a transcendence basis of $K(H)$ over $K$. Let $s'$ be the number of independent 
projective apices of $H' = (H_1,H_2,\ldots,\allowbreak H_{M'})$. Since $s' \ge s$, we can 
deduce that $H'$ satisfies (i) or (ii) as well as $H$. Hence theorem \ref{cominherred} allows
us to assume that $M = M' = \rk \jac H + 2$. Having been useful, we abandon the assumption 
that $H_1, H_2, \ldots, H_{M'-2}$ is a transcendence basis of $K(H)$ over $K$ from now on. 

From (ii) of proposition \ref{nopia}, it follows that $(H_{s+1},H_{s+2},\ldots,H_M)$ does not 
have a projective image apex. So for $k = M$, there exists a $B \in \Mat_{k-s,M}(K)$, 
such that $\rk B = k - s$ and $(B_1H, B_2H, \ldots,B_{k-s}H)$ does not have a projective 
image apex. So we can take $k$ as small as possible, such that there exists a 
$B \in \Mat_{k-s,M}(K)$, such that $\rk B = k - s$ and $(B_1H, B_2H, \ldots, B_{k-s}H)$ 
does not have a projective image apex. From (ii) of proposition \ref{apexmap}, it follows that 
the first $s$ columns of $B$ are zero. Hence there exists a $T \in \GL_M(K)$ of the form
$$
T = \left(\begin{array}{cccc}
I_s & \emptyset \\
\emptyset & *
\end{array}\right)
$$
such that $B$ is the submatrix of $T$ which consists of the rows $s+1, s+2, \ldots, k$.
Now replace $H$ by $TH$. Then $(H_{s+1},H_{s+2},\ldots,H_k)$ 
does not have a projective image apex.

Since $M = \rk \jac H + 2$, it follows from proposition 1.2.9 of either \cite{MR1790619} or \cite{homokema} that $\trdeg_K K(H) = M - 2$. On account of (iii) of proposition \ref{nopia},
$$
\trdeg_K K(H_{s+2},H_{s+3},\ldots,H_M) \le \trdeg_K K(H) - s < M - (s+1)
$$
Again from (iii) of proposition \ref{nopia}, we deduce that there exists a nonzero vector
in $K^{M-(s+1)}$ which is not a projective image apex of $(H_{s+2},H_{s+3},\ldots,H_M)$.

From (iv) of theorem \ref{piacalc}, it follows that there exist a matrix $Q$ in column-echelon form, of which the column space is the space of projective image apices of $(H_{s+2},H_{s+3},\ldots,H_M)$. Take $\tilde{Q} \in \GL_{M-(s+1)}$ lower triangular, such that $Q$ is a submatrix of $\tilde{Q}$, and let 
$$
\tilde{T} := \left( \begin{array}{cc} I_{s+1} & \emptyset \\
\emptyset & \tilde{Q}^{-1} \end{array} \right)
$$
From (ii) of proposition \ref{apexmap}, it follows that the space of projective image apices of $\big((\tilde{T}H)_{s+2},(\tilde{T}H)_{s+3},\ldots,(\tilde{T}H)_M\big)$ is generated by standard basis unit vectors. Let $k' - (s+1)$ be the number of generating standard basis unit vectors. Now replace $H$ by $\tilde{T}H$.
Since $\tilde{T}$ is lower triangular, the property that $(H_{s+1},H_{s+2},\ldots,H_k)$ 
does not have a projective image apex is preserved. 

There are $\big(M - (s+1)\big) - \big(k' - (s+1)\big) = M - k'$ standard basis 
unit vectors which are \emph{not} projective image apices of $(H_{s+2},H_{s+3},\ldots,H_M\big)$. 
If we subsequently replace $H$ by $PH$ for a suitable permutation matrix $P$, 
then we can obtain that the above $M - k'$ standard basis unit vectors will become 
successive, in such a way that the property that $(H_{s+1},H_{s+2},\ldots,H_k)$ 
does not have a projective image apex is preserved. From (ii) of proposition 
\ref{nopia}, it follows that  $(H_{k'+1-j},H_{k'+2-j},\ldots,H_{M-j})$ 
does not have a projective image apex for some $j$ such that $0 \le j \le k' - (s+1)$.

We show that $j = 0$. Notice that $s+1 < k'+1-j$. If $M-j \le k$, then $(H_{k'+1-j},H_{k'+2-j},\ldots,H_{M-j})$ has less 
components than $(H_{s+1},H_{s+2},\ldots,H_k)$, which contradicts the minimality of $k$.
So $M-j > k$. It follows from lemma \ref{upia} that 
$H_{s+1}$ and $H_{M-j}$ are algebraically dependent over $K$ on $H_{s+2}, H_{s+3}, \ldots,\allowbreak
H_{M-j-1}$. But $\trdeg_K K(H) = M - 2$, so for each $i > M-j$, $H_i$ is 
algebraically independent over $K$ of $H_1,H_2,\ldots,H_{i-1},H_{i+1},H_{i+2},\ldots,H_M$.
On account of lemma \ref{upia}, we have $j = 0$ indeed.

If $k' = s+1$, then we can take $h = (H_1, H_2, \ldots, H_s, H_{s+2}, H_{s+3}, \ldots, H_M)$.
So assume that $k' \ge s+2$.
\begin{enumerate}[\upshape (i)]
 
\item Assume that $\rk \jac H \le s + 2$.
Since $M - 2 = \rk \jac H \le s + 2 \le k'$, we have $M \le k' + 2$. By minimality of $k$,
$k - s \le M - k' = 2$. So $k \le s+2 \le k'$. It follows from (i) of theorem \ref{Mred}
that we can take $h$ as in theorem \ref{Mred}.

\item Assume that $H$ has an image apex and $\rk \jac H \le s + 3$.
Since $M - 2 = \rk \jac H \le s + 3 \le k' + 1$, we have $M \le k' + 3$. By minimality of $k$,
$k - s \le M - k' = 3$. So $k \le s+3 \le k'+1$. It follows from (ii) of theorem \ref{Mred}
that we can take $h$ as in theorem \ref{Mred}. \qedhere

\end{enumerate}
\end{proof}

\begin{theorem} \label{HEredc}
Let $K$ be a field of characteristic zero and $H \in K[X]^N$.
Suppose that $\rk \jac H < n \le N$ and let $s$ be the number of independent 
projective apices of $H$. Then there exists an $h \in K[x]^n$ such that
$h$ and $H$ are symmetrically \HE-paired, in such a way that every projective 
image apex of $h$ is inherited, in the following cases:
\begin{enumerate}[\upshape (i)]

\item $\rk \jac H \le s + 2$;

\item $H$ has an image apex and $\rk \jac H \le s + 3$;
      
\item $\jac H$ is symmetric and $\rk \jac H \le 3$;

\item $H$ has an image apex, $\jac H$ is symmetric and $\rk \jac H \le 4$.
      
\end{enumerate}
\end{theorem}

\begin{proof}
We will prove (iii) and (iv) in section \ref{iiiniv}. We will not need them before
section \ref{iiiniv}.

The proof of (i) and (ii) is similar to that of (i) and (ii) of proposition 
\ref{HEredb} respectively, but there are some points of attention. 
First of all, $M = N$ and $m = n$ because the \HE-pairing must be symmetric.
So we may assume by induction that $n = N-1$. 

The reduction to the case $N = \rk \jac H + 2$ can be accomplished by means of theorem \ref{HEpairinga}, because $H'_{M'}$ may be any nontrivial linear combination of $H_{M'}, H_{M'+1}, \ldots, H_{M}$ (not just $H_{M'})$.
The same holds for the use of theorem \ref{Mred} in (i) and (ii) if $k' \ge s+2$, because it shows that by way of replacing $H$ by $DH(DX)$ for an invertible diagonal matrix $D$, we can obtain that $H|_{x_M = x_{s+1}}$ and $H$ are \HE-paired.

So the case $k' = s+1$ remains to be studied. 
If $\trdeg_K K(H|_{x_{s+1}=0}) = \trdeg_K K(H)$, then 
$h(x_1,x_2,\ldots,x_s,0,x_{s+1},x_{s+2},\ldots,x_n)$ and $H$ are symmetrically
\HE-paired, where $h$ is as in the proof of proposition \ref{HEredb}.

In the general case, it follows from theorem \ref{HEpairinga} that there
are $c_{s+1}, c_N \in K$ such that 
$$
H(x_1,x_2,\ldots,x_s,c_{s+1} x_N,x_{s+2},x_{s+3},\ldots,x_n,c_N x_N)
$$
and $H$ are \HE-paired. Now we replace $H$ by $C\tp H (Cx)$, where
$C \in \GL_N(K)$ such that 
$$
Cx = x_1,x_2,\ldots,x_s,x_{s+1} + c_{s+1} x_N,x_{s+2},x_{s+3},\ldots,x_n,c_N x_N
$$
after which $\trdeg_K K(H|_{x_{s+1}=0}) = \trdeg_K K(H)$. This replacement will
undo that $(H_{k'+1},H_{k'+2},\ldots,H_N)$ does not have a projective image apex,
but restoring this property as in the proof of proposition \ref{HEredb} will not 
affect $\trdeg_K \allowbreak K(H|_{x_{s+1}=0}) = \trdeg_K K(H)$. But $k' = s+1$ 
does not need to hold any more.
\end{proof}

\section{Hessians with small rank over integral domains} \label{sec:rtheorem}

We will apply lemma \ref{slem} below by taking for $L$ the fraction field of the integral domain at hand. The proof of lemma \ref{slem} is obtained from results about quasi-translations in other papers. These results are obtained by geometric techniques, so lemma \ref{slem} is the interface to geometry of this paper.

\begin{lemma} \label{slem}
Let $L$ be a field of characteristic zero and $h \in L[x]$. Let $r := \rk \hess h$ 
and let $s$ be the number of independent projective image apices of $\grad h$ over 
$L$.
\begin{enumerate}[\upshape (i)]

\item
Suppose that $\grad h$ does not have an image apex. Then the following holds:
\begin{compactitem}
\item $s \ne r$;
\item If $r \le 2$, then $r = 2$ and $s = 1$.
\end{compactitem}

\item
Suppose that $\grad h$ does have an image apex. Then the following holds:
\begin{compactitem}
\item $s \ne r - 1$;
\item If $s \ne r \le 3$, then $r = 3$ and $s = 1$.
\end{compactitem}

\item
Suppose that $h$ is homogeneous. Then the following holds:
\begin{compactitem}
\item $s \ne r - 1$;
\item If $s \ne r \le 4$, then $r = 4$ and $s = 2$.
\end{compactitem}

\end{enumerate}
\end{lemma}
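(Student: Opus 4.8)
The plan is to treat the three items uniformly by exploiting the relationship between $\rk\hess h$, the transcendence degree of $\grad h$, and the number $s$ of independent projective image apices, together with the reduction machinery of Section~5. Throughout, set $H := \grad h$, so $\hess h = \jac H$ and $r = \rk\jac H$; note that $\trdeg_K K(H) = r$ by proposition 1.2.9 of \cite{MR1790619} (cited already in the proof of Theorem~\ref{dBpairings}). The key structural input is Corollary~\ref{plan}, which (over an algebraically closed $L$, but here we may take $L$ itself as the base field) says that $r \le s$ is equivalent to the vanishing ideal of $H$ being generated in degree $\le 1$, and that $r-1 \le s \le r$ together with the existence of an image apex is a third equivalent condition. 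The first bullet in each item — $s \ne r$ in (i), $s \ne r-1$ in (ii), $s \ne r-1$ in (iii) — should follow by assuming equality and deriving a contradiction with the hypothesis (``$\grad h$ has no image apex'', resp.\ a parity/degree obstruction peculiar to gradient maps), since for a genuine Hessian the vanishing ideal of $\grad h$ cannot be generated by linear forms in the forbidden range without forcing an apex or lowering the rank.

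For the first bullets: in item (i), if $s = r$ then by Corollary~\ref{plan} (applied with $L$ algebraically closed, using that $s$ and $r$ are insensitive to base extension here) the constant part of $H$ is an image apex of $H$, contradicting the assumption that $\grad h$ has no image apex. In items (ii) and (iii), if $s = r-1$ then again Corollary~\ref{plan} gives that $H$ has an image apex and the vanishing ideal is linearly generated; one then has to rule this out using the extra structure. For (iii), homogeneity of $h$ forces $H = \grad h$ to be homogeneous of degree $\deg h - 1$, so the vanishing ideal of $H$ is a homogeneous ideal; a homogeneous ideal generated by degree-$\le 1$ polynomials is generated by \emph{linear} forms, which would make (after a linear change of coordinates) $h$ depend on only $s+1 = r$ variables, forcing $\rk\hess h \le r$ trivially but also — because those $r$ coordinates are then algebraically independent components of $\grad h$ — actually contradicting $s = r - 1 < r$; so $s \ne r-1$. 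For (ii) with a general (inhomogeneous) $h$ having an image apex, the argument is the analogous one: $s = r-1$ with an image apex would via Corollary~\ref{plan} already place us in case $r \le s$, i.e.\ $r \le r-1$, absurd.

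For the second bullets — the ``small rank'' classifications — the strategy is: assume $s \ne r$ (resp.\ $s \ne r$) and $r$ small, and run the dimension-reduction theorems of Section~5 to pin down the only surviving possibility. In item (i), with no image apex and $r \le 2$: since $s \ne r$ and $s \le r$ we have $s \le r-1 \le 1$; if $s = 0$ then Corollary~\ref{plan} would be vacuously close to forcing structure — more directly, $r \le 2$ with $s \le r-1$ means either $(r,s) \in \{(1,0),(2,0),(2,1)\}$; the cases with $r \le 1$ are excluded because $\rk\hess h = 1$ forces (by a classical computation on $1\times 1$ minors / by proposition 1.2.9) that $\grad h$ is a single algebraically independent coordinate up to the others being functions of it, which produces an image apex; and $(2,0)$ is excluded because a rank-2 Hessian with no projective image apex at all is impossible — by Lemma~\ref{upia} some $e_i$ must be a projective image apex once $\trdeg$ drops below the ambient dimension in the right way, or one invokes Theorem~\ref{dBredc}(i) to reduce to $n = r+2 = 4$ variables and then contradicts. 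So $(r,s) = (2,1)$. Items (ii) and (iii) are parallel but use Theorem~\ref{dBredc}(ii) (image apex, $\rk\hess h \le s+3$, so reduce to $n = r+2$) and Theorem~\ref{dBredc}(iv) (symmetric Jacobian, homogeneous, $\rk\hess h \le 4$) respectively; in (ii) one shows $(r,s)$ must be $(3,1)$ and in (iii) $(r,s)$ must be $(4,2)$, by eliminating all smaller or intermediate $(r,s)$ pairs via the first-bullet inequalities $s\ne r$, $s\ne r-1$ plus the observation that each reduction step drops $s$ by at most what is allowed and never produces a configuration violating these.

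The main obstacle I expect is the second bullet of item (iii): ruling out $(r,s) \in \{(3,0),(3,1),(4,0),(4,1),(4,3)\}$ for homogeneous $h$ and concluding $(r,s) = (4,2)$ is exactly the content that generalizes Gordan--Noether, and the delicate case is $(4,1)$ and $(4,0)$ — showing a homogeneous form whose Hessian has rank $4$ must, after a linear change, have at least $2$ independent projective image apices of its gradient. Here one uses the homogeneity to invoke Theorem~\ref{dBredc}(iv), reducing to $n \le r+2 = 6$ variables with $\hess h$ symmetric of rank $4$; then $\trdeg_K K(\grad h) = 4 < n$, so by Proposition~\ref{nopia}(iii) and Lemma~\ref{upia} there is at least one projective image apex $e_i$, giving $s \ge 1$; bootstrapping, after quotienting out that apex one is in dimension $n-1$ with rank still $4$ and homogeneity preserved, and $\trdeg$ of the reduced gradient is $4 - 1 = 3 < n-1$, producing a second apex, hence $s \ge 2$. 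The remaining step — that $s$ cannot reach $3$ or equal $r$ — is the first bullet already proved. Items (i) and (ii) have strictly smaller obstacle because $r \le 2$ and $r \le 3$ respectively leave fewer pairs to eliminate, and the non-existence (resp.\ existence) of an image apex is precisely what pins things down through Corollary~\ref{plan} and Theorem~\ref{dBredc}(i)--(ii).
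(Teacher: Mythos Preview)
Your first bullets are largely sound. For (i), invoking Corollary~\ref{plan} to produce an image apex when $s=r$ is exactly what the paper does. For (ii), your argument is a clean rephrasing of the paper's use of $(2)\Rightarrow(1)$ in Corollary~\ref{plan}. For (iii) you work much too hard: the paper simply notes that for homogeneous $h$ of degree $\ge 2$ the map $H=\grad h$ is homogeneous of positive degree, so $f(H)=0\Rightarrow f(tH)=0$, whence the zero vector is an image apex, and then (iii) reduces to (ii).

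The second bullets, however, contain a genuine gap. Your core step is the claim that after reducing to $n=r+2$ variables, the inequality $\trdeg_K K(\grad h)=r<n$ forces, via Proposition~\ref{nopia}(iii) and Lemma~\ref{upia}, some $e_i$ to be a projective image apex. This is backwards. Lemma~\ref{upia} says $e_i$ is a projective image apex if and only if $H_i$ is algebraically \emph{independent} of the remaining components, i.e.\ removing $H_i$ strictly lowers the transcendence degree. The condition $\trdeg_K K(H)<n$ only says the components satisfy \emph{some} algebraic relation; it does not guarantee that any single component is indispensable. A simple model (not a gradient, but illustrating the logical point): if $H_1,\ldots,H_4$ are algebraically independent and $H_5=H_1H_2$, $H_6=H_3H_4$, then $\trdeg=4<6$, yet no $e_i$ is a projective image apex since deleting any one $H_i$ leaves the transcendence degree at $4$. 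Your bootstrapping for $s\ge 2$ in (iii) rests on the same error.

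The paper's route is entirely different. It first reduces to $n=r+1$ (not $r+2$) via Theorem~\ref{dBredc}, so that $\trdeg_L L(H)=n-1$ and Lemma~\ref{pquasi} applies: there is a nonzero $F$ with $F^{\rm t}\cdot\jac H=0$, unique up to scalar. The existence of a projective image apex is then \emph{equivalent} (via $(4)\Leftrightarrow(1)$ of Lemma~\ref{pquasi}) to a $K$-linear dependence among $F_1,\ldots,F_n$, and this is supplied by external Gordan--Noether type results: \cite[Prop.~2.1, Cor.~5.6]{1501.05168} for (i) and (iii), and \cite[Th.~4.5, Th.~4.6]{1501.04845} (the minimal relation $g$ is expressible in $n-1$ linear forms) for (ii) and (iii). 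These external inputs are the substantive content of the second bullets and cannot be replaced by the transcendence-degree count you propose.
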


\begin{proof}
Let $H := \grad h$. From proposition 1.2.9 of either \cite{MR1790619} or \cite{homokema}, 
it follows that $r = \trdeg_L L(H)$. 

On account of Lefschetz's principle, we may assume that $L \subseteq \C$.
From theorem \ref{piaLK}, it follows that there are only $s$ independent projective image 
apices of $H$ over $\C$ as well.
\begin{enumerate}[\upshape (i)]

\item
From (1) $\Rightarrow$ (2) of corollary \ref{plan}, it follows that $s < r$, which
gives the first claim. To prove the second claim, suppose that $r \le 2$.
Since $0 \le s < r$, it follows that $1 \le r \le 2$ and that it suffices to show that 
$s \ge 1$.

Now (i) of theorem \ref{HEredc} allows us to assume that $n = r+1$. 
So $\trdeg_L \allowbreak L(H) = n - 1$. Define $F$ as in lemma \ref{pquasi}.
Since $2 \le n = r + 1 \le 3$, it follows from \cite[Prop.\@ 2.1]{MR3794325} and \cite[Cor.\@ 5.6 (i)]{MR3794325} that there exists a nonzero $p \in K^n$ such that 
$p_1 F_1 + p_2 F_2 + \cdots + p_n F_n = 0$. On account of (3) $\Rightarrow$ (1) of 
lemma \ref{pquasi}, $p$ is a projective image apex of $H$. So $s \ge 1$.

\item
From (2) $\Rightarrow$ (1) of corollary \ref{plan}, it follows that $s \ne r-1$, 
which is the first claim. To prove the second claim, suppose that $s \ne r \le 3$.
From (1) $\Rightarrow$ (2) of corollary \ref{plan}, it follows that $s \le r$,
so $s \le r - 2$. If $r \le 2$, 
then it follows in a similar manner as in the proof of (i) that $s \ge 1$, which
contradicts $s \le r - 2 \le 2 - 2 = 0$. So $r = 3$ and 
$s \le r-2 = 1$, and it suffices to show that $s \ge 1$.

Just as before, (i) of theorem \ref{HEredc} allows us to assume that $n = r+1$. So $n = 4$ and $\trdeg_L \allowbreak L(H) = 3$.
Take $g \in \C[y]$ of minimum degree, such that $g(H) = 0$. Suppose first that $g$ is homogeneous. From
\cite[Th.\@ 4.6]{MR3804055}, it follows that $g$ can be expressed as a 
polynomial in $3$ linear forms. There exist a $p \in L^4$
on which these $3$ linear forms vanish, and a straightforward computation
shows that $\jac_y g \cdot p = 0$. So $p$ is a projective
image apex of $H$ on account of (6) $\Rightarrow$ (1) of lemma \ref{pquasi}. 
Hence $s \ge 1$.

Suppose next that $g$ is not homogeneous. From proposition \ref{apexmap}, it follows that we may assume that the origin is an image apex of $H$. Hence $g\big((1-t)H\big) = 0$. So $g(tH) = 0$. Therefore, $g$ can be replaced by any nonzero homogeneous component of it. Since $g$ has minimum degree, it follows that $g$ is homogeneous already.

\item
If $\deg H < 1$, then $r = 0$ and it follows from (1) $\Rightarrow$ (2) of corollary \ref{plan}
that $s = r = 0$, which gives both claims. So assume from now on that $\deg H \ge 1$.
Since $H$ is homogeneous of positive degree, it follows that $f(H) = 0$ implies $f(tH) = 0$, 
which is equivalent to $f\big((1-t)H\big) = 0$. Hence the zero vector is an image apex of $H$,
and the first claim follows from (ii). 

To prove the second claim, suppose that $s \ne r \le 4$. Just as in the proof of (ii), 
we have $s \le r - 2$, so it suffices to show that $s \ge 2$.
If $2 \le r \le 3$, then $s \ge 2$ follows in a similar manner as $s \ge 1$
follows in the case $1 \le r \le 2$ in (i), except that (ii) instead of (i) of
\cite[Cor.\@ 5.6]{MR3794325} is used.
So assume from now on that $r = 4$.

If $n = r+1 = 5$, then $s \ge 2$ follows in a similar manner as $s \ge 1$ follows in the 
case $n = r+1 = 4$ in (ii), except that \cite[Th.\@ 4.5]{MR3804055} is used instead of 
\cite[Th.\@ 4.6]{MR3804055}. So it suffices to reduce to the case $n = r+1 = 5$. 

Just like for $M$ in the proof of proposition \ref{HEredb} and $N$ in 
the proof of (i) and (ii) of theorem \ref{HEredc}, theorem \ref{cominherred} allows us to 
assume that $n = r + 2 = 6$ prior to the reduction to dimension $r+1 = 5$.
Take $k$ and $k'$ as in the proofs of proposition \ref{HEredb} and (i) and (ii) of 
theorem \ref{HEredc}.
The case $k' = s+1$ follows in a similar manner as in the proofs of 
proposition \ref{HEredb} and (i) and (ii) of theorem \ref{HEredc}.

So assume that $k' \ge s+2$. The case $k \le k' + 1$ follows in a similar manner
as in the proofs of (ii) of proposition \ref{HEredb} and (ii) of theorem \ref{HEredc}.
So assume that $k \ge k' + 2 \ge s + 4$. We will derive a contradiction with the minimality 
of $k$.

From theorem \ref{HEpairings}, it follows that there exists an 
$\tilde{h} \in K[x_1,x_2,x_3,x_4,x_5]$ such that 
$$
\tilde{H} := \Big(\parder{}{x_1}\tilde{h},\parder{}{x_2}\tilde{h},
\parder{}{x_3}\tilde{h},\parder{}{x_4}\tilde{h},\parder{}{x_5}\tilde{h}\Big)
$$
and $H$ are \HE-paired through matrices $C\tp$ and $C$, such that $C$ has a left inverse.
As we have seen above, $\tilde{H}$ has at least $2$ independent
projective image apices. Assume without loss of generality that $e_1$ and $e_2$ are
projective image apices of $\tilde{H}$. From (iii) of proposition \ref{nopia},
it follows that
$$
\trdeg_L L(\tilde{H}_3,\tilde{H}_4,\tilde{H}_5) = \trdeg_K K(\tilde{H}) - 2 = r - 2 = 2
$$
So $\tilde{H}_3,\tilde{H}_4,\tilde{H}_5$ are algebraically dependent over $L$.
From proposition \ref{HEphi}, it follows that $(C\tp)_3 H, (C\tp)_4 H, (C\tp)_5 H$
are algebraically dependent over $L$ as well. Since the rows of $C\tp$ are independent 
over $K$, we can deduce that $k - s \le 3$. This contradicts $k \ge s+4$. \qedhere
\end{enumerate}
\end{proof}

\begin{theorem} \label{rtheorem}
Let $R$ be an integral domain of characteristic zero, with fraction field $L$.
Assume that $H \in R[x_1,x_2,\ldots,x_m]^m$, such that $\jac H$ is symmetric. 
On account of Poincar{\'e}'s lemma or {\upshape\cite[Lem.\@ 1.3.53]{MR1790619}}, 
there exists an $h \in R[x_1,x_2,\ldots,x_m]$ such that $H_i = \parder{}{x_i} h$ 
for each $i$.

Write $r := \rk \jac H$.
\begin{enumerate}[\upshape (i)]

\item If $r = 0$, then $h$ has an image apex over $L$ which is contained in 
$R^m$, say $b$, and $h$ is of the form
$$
h = g + b_1 x_1 + b_2 x_2 + \cdots + b_m x_m
$$
where $g \in R$. 

Furthermore, $\{b\}$ is both the Zariski closure of the image of $H$ and 
the set of image apices over $L$ of $H$, and
$$
f(H) = 0 \Longleftrightarrow f\big((1-t)H + tb\big) = 0
\Longleftrightarrow f(b) = 0
$$
for every $f \in L[y]$.

\item If $r \le 1$ and $R$ is a $\gcd$-domain, then $h$ has an image apex 
over $L$ which is contained in $R^m$, say $b$, and $h$ is of the form
$$
h = g(p_1 x_1 + p_2 x_2 + \cdots + p_m x_m) + b_1 x_1 + b_2 x_2 + \cdots + b_m x_m
$$
where $g \in R[t]$ and $p \in R^m$, such that $r = \rk\,(p)$.

Furthermore, $\{b + \lambda p \,|\, \lambda \in L\}$ is both the Zariski 
closure of the image of $H$ and the set of image apices of over $L$ of $H$, and
$$
f(H) = 0 \Longleftrightarrow f\big(tp + (1-u)H + ub\big) = 0
\Longleftrightarrow f(tp + b) = 0
$$
for every $f \in L[y]$.

\item Suppose that $b \in R^m$ is an image apex of $H$ over $L$.
If $r \le 2$ and $R$ is a B{\'e}zout domain, then $h$ is of the form
\begin{align*}
h = g(p_1 x_1 + p_2 x_2 + \cdots + p_m x_m, q_1 x_1 + q_2 x_2 + \cdots + q_m x_m) + {} \\
b_1 x_1 + b_2 x_2 + \cdots + b_m x_m 
\end{align*}
where $g \in R[t,u]$, $p \in R^m$ and $q \in R^m$, such that $r = \rk\,(p\,|\,q)$.

Furthermore, $\{b + \lambda p + \mu q \,|\, \lambda, \mu \in L\}$ is both the Zariski 
closure of the image of $H$ and the set of image apices over $L$ of $H$, and
$$
f(H) = 0 \Longleftrightarrow f\big(tp + uq + (1-v)H + vb\big) = 0
\Longleftrightarrow f(tp + uq + b) = 0
$$
for every $f \in L[y]$, where $v$ is a variable.

\end{enumerate}
\end{theorem}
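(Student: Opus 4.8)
The plan is to treat the three parts in turn, each time first forcing the image of $H=\grad h$ into an affine-linear subspace whose dimension equals $r$, and then reading off the shape of $h$ from the fact that $H-b$ is then a polynomial map into a linear subspace. Part~(i) is immediate: $r=0$ means $\hess h=\jac H=0$, so in characteristic zero $\grad h$ is a constant vector $b\in R^m$, and then $h-(b_1x_1+\cdots+b_mx_m)$ has vanishing gradient, hence lies in $R$. Moreover $(f\in L[y]\mid f(H)=0)=(y_1-b_1,\dots,y_m-b_m)$, so evaluating $f\big((1-t)H+ta\big)$ at $t=1$ forces $a=b$; thus $\{b\}$ is at once the Zariski closure of the image of $H$ and the apex set, and the displayed equivalences are trivial. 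For the remaining parts I assume $r\ge 1$ (for $r=0$ fall back on~(i); in~(iii) the given image apex is then forced to equal the vector of linear coefficients of $h$, and the asserted form holds with $p=q=0$).

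For parts~(ii) and~(iii) the first real step is to pin down $s$, the number of independent projective image apices of $\grad h$ over $L$. By Corollary~\ref{plan} one always has $s\le r$. In part~(iii) an image apex of $H$ is given, and in part~(ii) with $r=1$ Lemma~\ref{slem}(i) forces $\grad h$ to have an image apex (otherwise $r$ would be $2$). So in both parts Lemma~\ref{slem}(ii) applies, yielding $s\ne r-1$ and the implication ``$s\ne r\le 3\Rightarrow r=3$''; with $r\le 2$ this leaves $s=r$. Then Corollary~\ref{plan} tells us that $I:=(f\in L[y]\mid f(H)=0)$ is generated by $m-r$ independent linear forms $\langle c^{(k)},y-b\rangle$ whose linear parts span $W^{\perp}$ for an $L$-subspace $W$ with $\dim_L W=\trdeg_L L(H)=r$, and that the vector $b$ of linear coefficients of $h$ (which lies in $R^m$) is an image apex. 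In part~(iii) one checks the given $b$ can be used instead, since every image apex lies in the common zero set $b+W$ of $I$ (evaluate $f\big((1-t)H+tb\big)=0$ at $t=1$). In either case the identities $\langle c^{(k)},H-b\rangle=0$ hold, so every coefficient vector of $H-b\in R[x]^m$ lies in $\bar M:=W\cap R^m$.

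Over the $\gcd$-domain (part~(ii)) resp.\ the B\'ezout domain (part~(iii)) I would next check that $\bar M$ is free of rank $r$ with a basis $p$ (resp.\ $p,q$) whose $r\times r$ minors generate the unit ideal of $R$ --- for $r=1$ this is just the choice of a primitive $p$ via Gauss's lemma, while for $r=2$ it rests on coherence of a B\'ezout domain and on finitely generated torsion-free modules over such a domain being free. I expect this module-theoretic bookkeeping to be the most delicate point; it is the only place the $\gcd$ resp.\ B\'ezout hypothesis is used, the geometric heart of the argument being already packaged in Lemma~\ref{slem} and Corollary~\ref{plan}. Granting it, $\grad\big(h-(b_1x_1+\cdots+b_mx_m)\big)=H-b$ is $W$-valued since its coefficient vectors lie in $\bar M\subseteq W$, whence $h-(b_1x_1+\cdots+b_mx_m)$ is constant along $W^{\perp}$ and therefore a polynomial $g$ in the linear forms $\langle p,x\rangle$ and $\langle q,x\rangle$ (whose common zero locus is $W^{\perp}$, since $p,q$ span $W$). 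As $x\mapsto(\langle p,x\rangle,\langle q,x\rangle)$ is then a split surjection $R^m\to R^r$, substituting an $R$-linear section shows $g\in R[t]$ resp.\ $g\in R[t,u]$, and $\rk(p\mid q)=\dim_L W=r$; this is the asserted form of $h$.

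It remains to handle the ``furthermore'' claims. The Zariski closure of the image of $H$ is the zero set of $I$, namely $b+W=\{b+\lambda p+\mu q\mid\lambda,\mu\in L\}$. By Theorem~\ref{piacalc}(ii) any two image apices of $H$ over $L$ differ by a projective image apex, hence by an element of $W$, and conversely Theorem~\ref{piacalc}(iii)--(iv) show that $b$ plus any element of $W$ is again an image apex; so $b+W$ is exactly the set of image apices over $L$. For the displayed equivalences, ``$\Leftarrow$'' comes from setting the auxiliary variables to $0$, and ``$\Rightarrow$'' follows by substituting $y=tp+uq+(1-v)H+vb$, resp.\ $y=tp+uq+b$, into a generator $\langle c^{(k)},y-b\rangle$ of $I$: the result is $t\langle c^{(k)},p\rangle+u\langle c^{(k)},q\rangle+(1-v)\langle c^{(k)},H-b\rangle$, which vanishes because $c^{(k)}\perp W\ni p,q$ and $\langle c^{(k)},H-b\rangle=0$. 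Part~(ii) is the specialisation with $q=0$, with the evident simplifications.
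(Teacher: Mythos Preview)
Your overall strategy matches the paper's: both invoke Lemma~\ref{slem} to force $s=r$ and then Corollary~\ref{plan} to see that the relation ideal of $H$ is generated by affine-linear forms, from which the shape of $h$ is read off. The paper first treats the case $R=L$ by a linear change of variables sending the projective image apices to $e_1,\dots,e_s$, and then descends to general $R$ via Lemma~\ref{gcddomain}; you instead argue directly that the coefficient vectors of $H-b$ lie in $W\cap R^m$ and try to find an $R$-basis of that module. Your treatment of the ``furthermore'' claims via Theorem~\ref{piacalc} and direct substitution into the linear generators is clean and essentially equivalent to the paper's use of Corollary~\ref{tucor}.

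There is, however, a real gap in your descent to $R$ for part~(ii). You assert that over a $\gcd$-domain one can choose a primitive $p\in R^m$ spanning $W$ ``whose $1\times 1$ minors generate the unit ideal,'' and then use an $R$-linear section of $x\mapsto p\tp x$ to conclude $g\in R[t]$. But ``primitive'' (meaning $\gcd(p_1,\dots,p_m)=1$) does \emph{not} imply that $(p_1,\dots,p_m)=R$ unless $R$ is B\'ezout: take $R=K[y,z]$ and $p=(y,z)$. So the split-surjection step fails for a general $\gcd$-domain. The repair is exactly the Gauss argument you mention, but applied to $h$ rather than to the module: once $h-b\tp x\in L[p\tp x]$ with $p$ primitive, each homogeneous part is $c_d(p\tp x)^d$ with $c_d\in L$, and since $(p\tp x)^d$ is primitive by Gauss's lemma, $c_d(p\tp x)^d\in R[x]$ forces $c_d\in R$. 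This is precisely how the paper handles it in Lemma~\ref{gcddomain}(ii). For part~(iii) your split-surjection argument is fine, because over a B\'ezout domain a primitive $2\times m$ matrix is genuinely left invertible; the paper packages this as Lemma~\ref{weaksmith} and Lemma~\ref{gcddomain}(iii).
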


\begin{proof}
Let $s$ be the number of independent projective image apices of $H$ over $L$. 
From (i) of lemma \ref{slem}, it follows that $H$ has an image apex over $L$ 
if $r \le 1$. So we may assume that $H$ has an image apex over $L$. 
From (ii) of lemma \ref{slem}, if follows that $s = r$ if $r \le 2$ and 
$H$ has an image apex over $L$. So we may assume that $s = r$ as well.
In particular, (1) of corollary \ref{plan} is satisfied.

On account of corollary \ref{plan}, $H(0)$ is an image apex of $H$
over $L$, which is contained in $R^m$. So $H$ has an image apex $b$
over $L$, which is contained in $R^m$. From (i) of proposition \ref{apexmap}, 
it follows that the origin is an image apex of $H - b$ over $L$. So if the 
origin is not an image apex of $H$ over $L$, then we can alter that by replacing 
$H$ by $H - b$. So we may assume that the origin is an image apex of $H$ over $L$.

We assume from now on that $R = L$. The general case reduces to this 
case by way of lemma \ref{gcddomain} below, because we can take $b = 0$
on account of the assumption that the origin is an image apex of $H$ over $L$.

There exists a $Q \in \GL_m(L)$ such that the first $s$ columns 
$Q e_1, Q e_2, \ldots, Q e_s$ of $Q$ are projective image apices of $H$. 
Let $p$ be the first column of $Q$ if $r = s \ge 1$ and let $q$ be the 
second column of $Q$ if $r = s \ge 2$. We can write $h$ in the form 
$h = \tilde{h}(Q\tp x)$, so $H = Q \tilde{H}(Q\tp x)$, where 
$\tilde{H} = \grad \tilde{h}$. 

Since $\tilde{H}$ and $H$ are \HE-paired through $Q^{-1}$ and $(Q^{-1})\tp$,
it follows from (ii) of proposition \ref{inherprop} that $e_1, e_2, \ldots, 
e_s$ are projective image apices of $\tilde{H}$. From (iii) of proposition
\ref{nopia}, it follows that $\trdeg_L L(\tilde{H}_{s+1},\tilde{H}_{s+2},
\ldots,\tilde{H}_m) = 0$, so $\tilde{H}_i \in L$ for all $i > s$.

Suppose that $\tilde{H}_i \ne 0$ for some $i > s$. Then $t$ is algebraically 
dependent on the $i$-th component of $(1-t)\tilde{H}$. From (i) of proposition
\ref{algdepprop}, it follows that the origin is not an image apex of $\tilde{H}$,
which contradicts (i) of proposition \ref{inherprop}. So $\tilde{H}_i = 0$
for all $i > s$. Hence $\tilde{h} \in L[x_1,x_2,\ldots,x_s]$ and
$$
h \in L[(Q\tp x)_1, (Q\tp x)_2, \ldots, (Q\tp x)_s] 
    = L[x\tp Q e_1, x\tp Q e_2, \ldots, x\tp Q e_s]
$$
So $h$ is as given.

Let $W$ be the Zariski closure of the image of $H$. From (1) $\Rightarrow$ (3) of
corollary \ref{plan}, it follows that $W$ is an affine plane of dimension $s$.
Using (iii) and (iv) of corollary \ref{tucor}, we deduce that 
\begin{equation} \label{zareq}
f(H) = 0 \Longrightarrow f\big((1-t)H + y_1 Q e_1 +  y_2 Q e_2 + \cdots + y_s Q e_s\big) = 0
\end{equation}
for every $f \in [y]$.

Take $c \in L^s$ arbitrary. If we substitute $t = 1$ and $y_i = c_i$ for each $i \le s$ in 
\eqref{zareq}, then we see that $W$ contains the linear span of $Q e_1, Q e_2, \ldots, Q e_s$. 
Comparing dimensions, we deduce that $W$ is equal to this linear span. 
If we substitute $y_i = t c_i$ for each $i \le s$ in \eqref{zareq}, then we see 
that every point of $W$ is an image apex of $H$. The converse follows by substituting
$t = 1$ in the definition of image apex.
\end{proof}

\begin{lemma} \label{gcddomain}
Let $R$ be an integral domain  with fraction field $L$, and $h \in R[x]$. 
Suppose that $p, q \in L^n$.
\begin{enumerate}[\upshape (i)]
 
\item If $h \in L$, then $h \in R$. 
 
\item If $h \in L[p\tp x]$ and $R$ is a $\gcd$-domain, then 
$h \in R[\tilde{p}\tp x]$ for some $\tilde{p} \in R^n$ which is dependent 
over $L$ on $p$.

\item If $h \in L[p\tp x,q\tp x]$ and $p,q \in L \cdot S^n$ for a B{\'e}zout 
subdomain $S$ of $R$, then $h \in R[\tilde{p}\tp x,\tilde{q}\tp x]$ for some 
$\tilde{p},\tilde{q} \in S^n$ which are dependent over $L$ on $p$ and $q$.

\item If $h \in L[p\tp x,q\tp x]$, $R$ is a $\gcd$-domain and there exists an 
$i$ such that $\tilde{p} := p_i^{-1}p \in R^n$, then 
$h \in R[\tilde{p}\tp x,\tilde{q}\tp x]$ and $\tilde{q}_i = 0$ for some 
$\tilde{q} \in K^n$ which is dependent over $L$ on $p$ and $q$.

\end{enumerate}
\end{lemma}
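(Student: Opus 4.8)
The plan is to prove the four parts in the order (i), (ii), (iv), (iii), isolating a common ``descent core'' and feeding it through a linear change of coordinates in the last two parts. Part (i) is immediate: the constant term of $h$ is a coefficient of $h\in R[x]$, so it lies in $R$, whence $h\in L\cap R[x]=R$. For the core, suppose $D$ is a domain with fraction field $L$, $v\in D^n$, $g\in L[t]$, and $g(v\tp x)\in D[x]$; writing $g=\sum_k c_k t^k$, the degree-$k$ homogeneous part of $g(v\tp x)$ equals $c_k(v\tp x)^k$ and hence lies in $D[x]$, so each $c_k v_i^k\in D$. I would then conclude $c_k\in D$ in either of two situations: (a) $D$ is a $\gcd$-domain and $v$ is primitive --- write $c_k=a/b$ in lowest terms, so $b\mid v_i^k$ for all $i$ by Euclid's lemma, hence $b\mid\gcd_i(v_i^k)=\gcd_i(v_i)^k$, a unit, using the standard identity $\gcd(v_1^k,\dots,v_n^k)=\gcd(v_1,\dots,v_n)^k$ in $\gcd$-domains; or (b) the entries of $v$ generate the unit ideal of $D$ --- then so do the $v_i^k$, so $c_k$ is a $D$-combination of the elements $c_k v_i^k$. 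Part (ii) then follows from case (a) after rescaling $p$ to a primitive $\tilde p\in R^n$ ($R$ being a $\gcd$-domain) and absorbing the scalar into $g$; the case $p=0$ reduces to (i).

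For (iv), after replacing $p$ by $p_i^{-1}p\in R^n$ (so its $i$-th entry is $1$), absorbing $p_i$ into $g$, and then replacing $q$ by $q':=q-q_i p$ (again absorbing the correction into $g$) so that $q'_i=0$, I would apply the triangular $R$-automorphism $\psi$ of $R[x]$ with $\psi(x_i)=x_i-\sum_{j\ne i}p_j x_j$ and $\psi(x_l)=x_l$ for $l\ne i$. It sends $p\tp x$ to $x_i$ and fixes $q'\tp x$, so $\psi(h)=g_2(x_i,q'\tp x)\in R[x]$ with $g_2\in L[t,u]$; since $q'\tp x$ involves only the variables $x_l$ with $l\ne i$, grading $\psi(h)$ by the degree in $x_i$ shows that each coefficient in the corresponding expansion lies in $R[x_l:l\ne i]$. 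Applying the core, case (a), to a fixed primitive rescaling $\hat q\in R^n$ of $q'$ (with $\hat q_i=0$), uniformly in the $x_i$-degree, rewrites $\psi(h)$ as $\hat g(x_i,\hat q\tp x)$ for some $\hat g\in R[t,u]$; undoing $\psi$ then gives $h=\hat g(\tilde p\tp x,\hat q\tp x)$ with $\tilde p=p_i^{-1}p$ and $\tilde q:=\hat q$, which have the required properties.

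Part (iii) follows the same template, but over the B\'ezout subdomain $S$. After rescaling $p,q$ into $S^n$, disposing of the case $\dim_L\langle p,q\rangle\le 1$ via the core, case (b) (or via (i) if both vanish), and rescaling $p$ to be primitive over $S$, the extra ingredient is that over a B\'ezout domain every finitely generated torsion-free module is free; since a primitive vector over $S$ is unimodular, $p$ extends to a basis of $S^n$, i.e.\ $p$ is the first column of some $T\in\GL_n(S)$. Conjugating by the linear $R$-automorphism with matrix $(T\tp)^{-1}\in\GL_n(S)$ sends $p\tp x$ to $x_1$ and keeps $q\tp x$ of the form $(T^{-1}q)\tp x$ with $T^{-1}q\in S^n$; subtracting off its $x_1$-part and rescaling to a primitive $\hat q\in S^n$ with $\hat q_1=0$ puts us, after grading by the $x_1$-degree, in the situation of the core with $v=\hat q$. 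As $\hat q$ is primitive over the B\'ezout domain $S$, its entries generate the unit ideal of $S$, hence of $R$, so case (b) applies and produces $R$-coefficients; undoing the automorphism yields $h\in R[\tilde p\tp x,\tilde q\tp x]$ with $\tilde p=p$ and $\tilde q=T\hat q\in S^n$, both dependent over $L$ on $p$ and $q$.

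The main obstacle is part (iii): since $R$ is only assumed to be a domain, all $\gcd$-style cancellations must be carried out over $S$ and then transported to $R$ via the ``generates the unit ideal'' formulation, and one needs the structural fact that finitely generated torsion-free modules over a B\'ezout domain are free in order to normalize $p$ to a coordinate vector by an $\GL_n(S)$-change of variables. In (ii) and (iv) the only real subtlety is keeping careful track of which ring each vector and each coefficient lives in across the $\psi$-conjugation.
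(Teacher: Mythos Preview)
Your proof is correct. Parts (i), (ii), and (iv) follow essentially the same route as the paper: your ``core'' argument is the paper's appeal to Gauss's lemma in (ii), and your automorphism $\psi$ in (iv) is exactly the paper's substitution $x_i\mapsto 2x_i-\tilde p^{\mathrm t}x$, after which both grade by $x_i$-degree and invoke primitivity. (You should explicitly dispose of the degenerate case $q'=0$ in (iv), but that just collapses to (ii).)

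Part (iii) is where the two arguments genuinely diverge. You complete the primitive vector $p$ to an element of $\GL_n(S)$, conjugate, and then run the grading-plus-core argument; this works, but relies on the structural fact that finitely generated torsion-free modules over a B\'ezout domain are free. The paper instead proves a self-contained ``weak Smith normal form'' (its Lemma~\ref{weaksmith}): by explicit column operations over $S$ it factors $(p\mid q)=Q\cdot B$ with $Q\in\Mat_{n,2}(S)$ admitting a left inverse $C\in\Mat_{2,n}(S)$. One then simply substitutes $x=C^{\mathrm t}(t,u)$ into $h=f(\tilde p^{\mathrm t}x,\tilde q^{\mathrm t}x)$ to get $f(t,u)=h(C^{\mathrm t}(t,u))\in R[t,u]$ in one line, with no grading needed. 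So your approach invokes a stronger off-the-shelf theorem about B\'ezout domains, while the paper's approach is more elementary and self-contained, needing only a left inverse rather than a full $\GL_n(S)$-completion.
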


\begin{listproof}
\begin{enumerate}[\upshape (i)]

\item Suppose that $h \in L$. Then $h \in R[x] \cap L = R$.

\item Suppose that $h \in L[p\tp x]$ and that $R$ is a $\gcd$-domain. 
Then there exists a $\tilde{p} \in R^n$ which is dependent over $L$ on $p$, 
such that $\gcd\{\tilde{p}_1,\tilde{p}_2,\ldots,\tilde{p}_n\} = 1$. 
Since every homogeneous part of $h$ is contained in $R[x]$ and an 
$L$-multiple of a power of $\tilde{p}\tp x$, it follows from 
Gauss's lemma that $h \in R[\tilde{p}\tp x]$.

\item Suppose that $h \in L[p\tp x,q\tp x]$ and that 
$p,q \in L\cdot S^n$ for a B{\'e}zout subdomain $S$ of $R$. Then the 
column space of $P := (p\,|\,q)$ is generated by column vectors over $S$.

From lemma \ref{weaksmith} below, it follows that $P = Q\cdot B$ for some 
$B \in \Mat_2(L)$ and a $Q \in \Mat_{n,2}(R)$ which is left invertible. 
If we define $(\tilde{p}\,|\,\tilde{q}) := Q$, then 
$h \in L[\tilde{p}\tp x, \tilde{q}\tp x]$, say that 
$h = f(\tilde{p}\tp x,\tilde{q}\tp x)$, where $f \in L[t,u]$.

Let $C \in \Mat_{n,2}(R)$ be a left inverse of $Q$. Then 
$$
h(C\tp(t,u)) \in R[C\tp(t,u)] \subseteq R[t,u]
$$
Since 
$$
h\big(C\tp(t,u)\big) = f\big(\tilde{p}\tp \cdot C\tp(t,u), \tilde{q}\tp \cdot C\tp(t,u)\big) = 
f\big(Q\tp C\tp(t,u)\big) 
$$
and $Q\tp C\tp = (CQ)\tp = I_2\tp = I_2$, we conclude that $f \in R[t,u]$. 

\item Suppose that $h \in L[p\tp x, q\tp x]$, that $R$ is a $\gcd$-domain and 
that there exists an $i$ such that $\tilde{p} := p_i^{-1}p \in R^n$.
If $p$ and $q$ are dependent over $L$, then $q$ 
is redundant and the result follows from (i), because $\tilde{p}_i = 1$. So assume that 
$p$ and $q$ are independent over $L$. Let $\tilde{q}$ be the $L$-multiple of $q - q_i \tilde{p}$ 
from which the coefficients are relatively prime. Then $\tilde{q}_i = 0$ because 
$q_i - q_i \tilde{p}_i = 0$.

It is clear that $h \in L[\tilde{p}\tp x,\tilde{q}\tp x]$, say that 
$h = f(\tilde{p}\tp x,\tilde{q}\tp x)$, where $f \in L[y_1,y_2]$.
Then $f(x_i,\tilde{q}\tp x) = h|_{x_i = 2 x_i - \tilde{p}\tp x} \in R[x]$. Now apply the proof of (i) 
on the coefficient of $x_i^k$ of $f(x_i,\tilde{q}\tp x)$ for each $k$ to obtain that $f \in R[y_1,y_2]$.
So $h \in R[\tilde{p}\tp x,\tilde{q}\tp x]$. \qedhere

\end{enumerate}
\end{listproof}

\begin{lemma} \label{weaksmith}
Let $S$ be a B{\'e}zout domain which is contained in a field $L$, and 
$P \in \Mat_{m,n}(S)$.
If $r \ge \rk P > 0$, then there exists a $Q \in \Mat_{m,r}(S)$
which is left invertible over $S$, such that $P = QA$ for some 
$A \in \Mat_{r,n}(S)$.
\end{lemma}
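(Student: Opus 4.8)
The plan is to reduce $P$ to a matrix with at most $r$ nonzero rows by left multiplication with a matrix in $\GL_m(S)$, and then to read off $Q$ as a block of columns of the inverse of the reduction matrix. The only place where the B{\'e}zout hypothesis (rather than, say, that of a principal ideal domain) is used is in the row reduction.

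\emph{Hermite reduction over $S$.} I would first prove that every matrix over $S$ is left equivalent, through some element of $\GL_m(S)$, to a matrix in row echelon form, whose number of nonzero rows is therefore its rank. The one non-formal ingredient is a $2\times 2$ clearing operation. Given $x,y\in S$, the finitely generated ideal $(x,y)$ is principal, say $(x,y)=(d)$; if $d\neq 0$, write $x=dx'$ and $y=dy'$ and $d=ax+by$ with $a,b\in S$, so that $ax'+by'=1$. Then $\left(\begin{smallmatrix}a&b\\-y'&x'\end{smallmatrix}\right)\in\operatorname{SL}_2(S)\subseteq\GL_2(S)$ sends the column $\left(\begin{smallmatrix}x\\y\end{smallmatrix}\right)$ to $\left(\begin{smallmatrix}d\\0\end{smallmatrix}\right)$. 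Embedding such operations into $\GL_m(S)$ and applying them to pairs of rows, processing the columns from left to right (and never disturbing the leading columns already cleared, since the rows involved are zero there), one reaches an echelon form. As left multiplication by an invertible matrix preserves the rank, and as the nonzero rows of an echelon matrix are linearly independent over $L$, the number of nonzero rows equals $\rk P$.

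\emph{Extracting $Q$ and $A$.} By the previous step there is a $U\in\GL_m(S)$ with $UP=\left(\begin{smallmatrix}A\\ 0\end{smallmatrix}\right)$, where $A\in\Mat_{r,n}(S)$ is the block of the first $r$ rows (some of which may vanish when $\rk P<r$) and the last $m-r$ rows are zero; this uses $\rk P\le r\le m$, and $r\le m$ is forced in any case because an $m\times r$ matrix that is left invertible over $S$ can exist only when $r\le m$. Put $Q:=U^{-1}\left(\begin{smallmatrix}I_r\\ 0\end{smallmatrix}\right)\in\Mat_{m,r}(S)$, i.e.\@ the first $r$ columns of $U^{-1}$. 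Then $P=U^{-1}(UP)=U^{-1}\left(\begin{smallmatrix}I_r\\ 0\end{smallmatrix}\right)A=QA$, and with $C:=(I_r\mid 0)\,U\in\Mat_{r,m}(S)$ one gets $CQ=(I_r\mid 0)\,U U^{-1}\left(\begin{smallmatrix}I_r\\ 0\end{smallmatrix}\right)=(I_r\mid 0)\left(\begin{smallmatrix}I_r\\ 0\end{smallmatrix}\right)=I_r$, so $Q$ is left invertible over $S$, as required.

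\emph{Main obstacle.} The substance is entirely in the Hermite reduction: checking that the classical row echelon algorithm survives the passage from a PID to an arbitrary B{\'e}zout domain. This rests solely on the displayed $2\times 2$ identity, which uses nothing beyond ``finitely generated ideals are principal'' --- no Noetherian or factorization hypothesis. The remaining work is the routine bookkeeping of the induction: at each stage, choosing a column in which some still-active row is nonzero, and confirming that the clearing operations leave the earlier pivot columns alone. (Alternatively, one could invoke that finitely generated torsion-free modules over a B{\'e}zout domain are free, applied to the column module of $P$ and its saturation in $S^m$, but the elementary reduction above is self-contained.)
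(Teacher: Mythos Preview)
Your proof is correct. It differs from the paper's in its organization, though the core B{\'e}zout ingredient (the $2\times 2$ clearing matrix) is the same.

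The paper works with \emph{column} operations on an augmented matrix $Q_{(0)}=(P\mid *)$ of rank exactly $r$, normalizing one column at a time: at step $i$ it divides the next nonzero column by its $\gcd$, produces a B{\'e}zout vector $c$ with $c\tp p=1$, and then clears via column operations so that $c\tp Q_{(i)}=e_i\tp$. The rows $c\tp$ accumulate into the left inverse $C$, and a final rank argument shows that the surviving columns beyond the $r$-th are zero, yielding $Q$ and $A$.

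Your route is the more standard Hermite reduction: act on the left by $U\in\GL_m(S)$ to bring $P$ to row echelon form $UP=\left(\begin{smallmatrix}A\\0\end{smallmatrix}\right)$, then read off $Q$ as the first $r$ columns of $U^{-1}$ and $C$ as the first $r$ rows of $U$. This is cleaner conceptually and the block-matrix verification $P=QA$, $CQ=I_r$ is immediate. The paper's more hands-on construction, on the other hand, is designed to be reused: the proof of Corollary~\ref{weaksmithcor} reaches back into the algorithm to modify the choice of the B{\'e}zout vectors $c$, and to control the shape of $A$ or the leading parts of the columns of $Q$. Your argument would need a small adaptation to recover those refinements (e.g.\ tracking the echelon pivots to get the triangular form of $A$).
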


\begin{proof}
Just like in the algorithm to compute the Smith normal form of a matrix over a principal ideal domain, we can multiply $P$ with an invertible matrix from the left, to obtain a matrix of which the first nonzero column is zero below the first row. If we recurse this on the submatrix without the first row, we see that we can multiply $P$ with an invertible matrix $\tilde{Q}$ from the left, to obtain a matrix $\tilde{A}$ in row-echelon form. All nonzero entries of $\tilde{A}$ are within the submatrix of the first $r$ rows of $\tilde{A}$, which we call $A$. For $Q$, we take the submatrix of the first $r$ columns of $\tilde{Q}$.
\end{proof}

\begin{corollary} \label{weaksmithcor}
Let $S = K[t]$, $L = K(t)$ and $P \in \Mat_{m,n}(S)$. 
If $r \ge \rk P > 0$, then there exists matrices $Q$ and $A$ as in lemma \ref{weaksmith},
such that in addition, the leading homogeneous parts of the columns of $Q$ are independent over $L$.
In other words, the matrix $\hat{Q} \in \Mat_{m,n}(K)$, which is defined by that 
$\hat{Q} e_i$ is the column of the coefficients of $t^{\deg (Q e_i)}$ of $Q_i$, 
has rank $r$.
\end{corollary}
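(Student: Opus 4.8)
The plan is to begin from the factorization $P = QA$ supplied by Lemma~\ref{weaksmith} for the given $r$ (recall $r \ge \rk P > 0$) and then to sharpen it, exploiting the one feature of $S = K[t]$ that Lemma~\ref{weaksmith} does not use: $K[t]$ is a principal ideal domain. The only move required is the following: for any $U \in \GL_r(S)$, the pair $(QU^{-1}, UA)$ again satisfies the conclusion of Lemma~\ref{weaksmith}, since the product is still $P$, all entries stay in $S$ (as $U, U^{-1} \in \GL_r(S)$), and $UC$ is a left inverse of $QU^{-1}$ whenever $CQ = I_r$. Thus $\GL_r(S)$ acts on $A$ by left multiplication, with the compensating right multiplication on $Q$; by symmetry it also acts on $Q$ by right multiplication, with the compensating left multiplication on $A$. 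I would use the first kind of move to establish (i) and the second to establish (ii).

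For part~(i): since $S$ is a principal ideal domain, there is a $U \in \GL_r(S)$ such that $UA$ is in row echelon form. The pivot of each nonzero row $i$ of $UA$ lies in a column $j_i$ with $j_1 < j_2 < \cdots$, so $j_i \ge i$, and the zero rows of $UA$ lie below the nonzero ones; hence $(UA)_{ij} = 0$ whenever $j < i$, i.e., after adjoining zero rows below row $r$ the matrix $UA$ is the top $r$ rows of a square triangular matrix, which is exactly the shape demanded in~(i). Replacing $A$ by $UA$ and $Q$ by $QU^{-1}$ then proves~(i).

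For part~(ii): here I would column-reduce $Q$. Set $d_i := \deg(Q e_i) \ge 0$ (nonnegative because a left-invertible matrix has no zero column), and let $\hat Q e_i$ be the coefficient of $t^{d_i}$ in the column $Q e_i$, so $\hat Q e_i \ne 0$ for each $i$. Suppose $\rk \hat Q < r$; choose $c_1, \dots, c_r \in K$, not all zero, with $\sum_i c_i \hat Q e_i = 0$, and take $j$ with $c_j \ne 0$ and $d_j$ maximal among $\{i : c_i \ne 0\}$. Replacing the $j$-th column of $Q$ by $\sum_{i:\,c_i \ne 0} (c_i/c_j)\, t^{d_j - d_i}\, (Q e_i)$ and leaving the remaining columns alone is right multiplication by some $V \in \GL_r(S)$ with $\det V = 1$ ($V$ differs from $I_r$ only in column $j$, whose $(j,j)$-entry is $1$); hence it is an allowed move, with $A$ updated to $V^{-1} A \in \Mat_{r,n}(S)$. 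Each summand $t^{d_j - d_i}(Q e_i)$ has degree exactly $d_j$, and the coefficient of $t^{d_j}$ in the new $j$-th column is $c_j^{-1} \sum_i c_i \hat Q e_i = 0$, so the degree of that column drops below $d_j$; therefore $\sum_i d_i$ strictly decreases while remaining $\ge 0$. The process must terminate, and it can only terminate once $\rk \hat Q = r$, which is~(ii).

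The only step that genuinely requires care is the insistence that every operation used be invertible over $S = K[t]$, not merely over $L = K(t)$: for~(i) this is precisely the existence of the row echelon (Hermite) normal form over the principal ideal domain $K[t]$, and for~(ii) it is the fact that the reducing transvection has determinant $1$. Granting this, part~(i) is essentially immediate, and the only real work lies in the degree-decreasing step of part~(ii), together with the well-foundedness (in $\sum_i d_i$) of the induction that makes it terminate.
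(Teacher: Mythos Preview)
Your proof is correct. For part~(ii) it is essentially the same argument as the paper's: both reduce the degree of a column of $Q$ by a unimodular column operation whenever the leading coefficient vectors $\hat Q e_i$ are dependent, and both terminate by induction on $\sum_i \deg(Q e_i)$.

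For part~(i) your route differs from the paper's. The paper goes back inside the algorithm of Lemma~\ref{weaksmith} and shows that at each step one can choose the B\'ezout vector $c$ orthogonal to the already-processed columns, so that the matrices $A_{(i)}$ stay zero below the diagonal throughout the construction. You instead treat Lemma~\ref{weaksmith} as a black box and post-process: take any factorization $P=QA$ it produces, then bring $A$ to row-echelon (Hermite) form by a left $\GL_r(S)$-action, compensating on $Q$. Your approach is cleaner in that it does not require re-reading the proof of Lemma~\ref{weaksmith}; it does invoke the existence of Hermite normal form over $K[t]$, but that is standard for a principal ideal domain (and in fact holds over any B\'ezout domain, so neither approach is more general than the other in this respect). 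The paper's approach, on the other hand, shows that the triangularity can be built in from the start rather than obtained after the fact.
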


\begin{proof}
Since $K[t]$ is a B{\'e}zout domain, we can find matrices $Q$ and $A$ as in lemma 
\ref{weaksmith}. Assume without loss of generality that 
$$
\deg (Q e_1) \le \deg (Q e_2) \le \cdots \le \deg (Q e_r)
$$
Suppose that the leading homogeneous parts of the columns of $Q$ are 
dependent over $L$. Then there exists an $i$ such that $\hat{Q} e_i$ is dependent 
over $K$ on $\hat{Q} e_1, \hat{Q} e_2, \ldots, \hat{Q} e_{i-1}$, say that 
$\hat{Q} e_i = c_1 \hat{Q} e_1 + c_2 \hat{Q} e_2 + \cdots + c_{i-1} \hat{Q}e_{i-1}$. 

Now replace $Q$ by
$$
Q - \sum_{j=1}^{i-1} c_j t^{\deg (Q e_i) - \deg (Q e_j)} \hat{Q} e_j
$$
Then $Q$ is replaced by $QU$ for an upper diagonal $U \in \Mat_r(S)$ with ones 
on the diagonal. So $U \in \GL_r(S)$ and we can replace $A$ by $U^{-1}A$ to 
preserve $P = QA$. Similarly, it follows that $Q$ remains left invertible over $S$. 

By induction on $\deg Q e_1 + \deg Q e_2 + \cdots + \deg Q e_r$, it follows that
we can get $A$ and $Q$ as claimed after repeating the above several times. \qedhere
\end{proof}

One can apply lemma \ref{weaksmith} another time on the transpose of the matrix $A$ from the first application, to obtain that $A$ is the product of a triangular matrix $B$ of size $r \times r$ and a right-invertible matrix. This yields a weak version of a result from 1861 by Henri John Stephen Smith in \cite{smith1861}. The result of Smith is essentially that $B$ is even a diagonal matrix, such that the diagonal entries of $B$ are ordered 
by divisibility. The Smith normal form of matrix $P$ is just the diagonal matrix $B$, filled up with zero entries on the right and below so it has the same dimensions as $P$.

But Smith assumed the ring $S$ to be noetherian, i.e.\@ a principal ideal domain. However, there exist B{\'e}zout domains for which Smith's result is valid, and no B{\'e}zout domains are known for which Smith's result is not valid, see \cite{MR0031470}.

\section{Hessians with small rank over polynomial rings} \label{sec:gradrel}

In (iii) of theorem \ref{rtheorem}, $R$ is assumed to be a B{\'e}zout domain, so $R$  cannot be a multivariate polynomial ring over a field. But we will allow $R$ to be any polynomial ring over a field of characteristic zero in our main theorem. To cover the case of $R$ being a polynomial ring in (v) of theorem \ref{gradrel} below, we make the following definition.

\begin{definition}
Let $K$ be a field and $R$ be an integral $K$-domain. Let $L$ be the fraction field 
of $R$. We call $R$ \emph{Bogal} if one of the following statements is satisfied.
\begin{itemize}

\item $R$ is a B{\'e}zout domain.

\item $R$ is a $\gcd$-domain and $L$ satisfies L{\"u}roth's theorem as an
extension field of $K$.

\end{itemize}
Here, Bogal stands for `B{\'e}zout or $\gcd$ and L{\"u}roth'.
\end{definition}

\begin{theorem}[Main theorem] \label{gradrel}
Let $K$ be a field of characteristic zero. Assume that $R$ is an integral 
$K$-domain. Let $L$ be the fraction field of $R$ and suppose that $K$
is algebraically closed in $L$.

Assume that $H \in R[x_1,x_2,\ldots,x_m]^m$, such that $\jac R$ is symmetric. 
On account of Poincar{\'e}'s lemma or {\upshape\cite[Lem.\@ 1.3.53]{MR1790619}}, 
there exists an $h \in R[x_1,x_2,\allowbreak \ldots,x_m]$ such that $H = \grad h$. 
Suppose that $H$ does \emph{not} have a projective image apex $p^{(1)} \in K^m$ over $K$.
\begin{enumerate}[\upshape (i)]

\item 
Suppose that $\trdeg_K K(H) = 0$. Then $H$ \emph{does} have an image apex 
$a \in K^m$ over $K$, and $h$ is of the form 
$$
h = g + a_1 x_1 + a_2 x_2 + \cdots + a_m x_m
$$
where $g \in R$. Furthermore, $H = a$.

\item 
Suppose that $\trdeg_K K(H) = 1$. Then $H$ does \emph{not} have an image
apex $a \in K^m$ over $K$, and $h$ is of the form
$$
h = g + b_1 x_1 + b_2 x_2 + \cdots + b_m x_m
$$
where $g \in R$ and $b \in R^m \setminus K^m$. Furthermore, $H = b$.

\item 
Suppose that $\trdeg_K K(H) = 2$ and that $H$ \emph{does} have an image apex 
$a \in K^m$ over $K$. If $R$ is a $\gcd$-domain, then $h$ is of the form
$$
h = g(p_1 x_1 + p_2 x_2 + \cdots + p_m x_m) + a_1 x_1 + a_2 x_2 + \cdots + a_m x_m  
$$
where $g \in R[t] \setminus R$ and $p \in R^m \setminus R \cdot K^m$, such that
$$
\trdeg_{K(\gamma)} K(\gamma)(tp) = 1
$$ 
for some $\gamma \in L$. Furthermore, $f(H) = 0 \Longleftrightarrow f(tp + a) = 0$
for all $f \in K[y]$.

\item 
Suppose that $\trdeg_K K(H) = 2$ and that $H$ does \emph{not} have an image
apex $a \in K^m$ over $K$. If $R$ is a $\gcd$-domain, then $h$ is of the form
$$
h = g(p_1 x_1 + p_2 x_2 + \cdots + p_m x_m) + b_1 x_1 + b_2 x_2 + \cdots + b_m x_m
$$
where $g \in R[t]$ and $p, b \in R^m$, such that $\deg b \ge 1$. In addition, 
$$
\trdeg_{K(\gamma)} K(\gamma)(tp) + \trdeg_{K(\gamma)} K(\gamma)(b - \lambda p) = 1
$$ 
for some $\gamma, \lambda \in L$.
Furthermore, $f(H) = 0 \Longleftrightarrow f(tp+b) = 0$ for all $f \in K[y]$.

If it is possible to take $p \in K^m$, then we can take $g \in R$ and
$p = 0$. In other words, $h$ is as in {\upshape (ii)} in that case, except
that $\trdeg_K K(b) = 2$ instead of $\trdeg_K K(b) = 1$.

\item
Suppose that $\trdeg_K K(H) = 3$ and that $H$ \emph{does} have an image apex 
$a \in K^m$ over $K$. If $R$ is a Bogal domain, then $h$ is of the form
\begin{align*}
h = g(p_1 x_1 + p_2 x_2 + \cdots + p_m x_m, 
q_1 x_1 + q_2 x_2 + \cdots + q_m x_m) + {} \\
a_1 x_1 + a_2 x_2 + \cdots + a_m x_m 
\end{align*}
where $g \in R[t,u]$ and $p,q \in R^m$ for each $i$. In addition, if $R = L$
or $L$ satisfies L{\"u}roth's theorem as an extension field of $K$, then
$$
\trdeg_{K(\gamma)} K(\gamma)(tp) + \trdeg_{K(\gamma)} K(\gamma)(uq) = 2
$$ 
for some $\gamma \in L$. Furthermore, $f(H) = 0 \Longleftrightarrow 
f(tp+uq+a) = 0$ for all $f \in K[y]$.

If it is possible to arrange that some nontrivial $L$-linear
combination of $p$ and $q$ is contained in $K^m$,
then we can take $g \in R[t] \setminus R$ and $q = 0$. 
In other words, $H$ is as in {\upshape (iii)}
in that case, except that $\trdeg_{K(\gamma)} K(\gamma)(tp) = 2$ instead
of $\trdeg_{K(\gamma)} K(\gamma)(tp) = 1$.

\end{enumerate}
If $R$ is a polynomial ring over $K$, then $\deg b \ge 2$ in {\upshape (ii)} and 
in the case $p = 0$ of (iv), and $\deg p \ge 2$ in {\upshape (iii)} and 
in the case $q = 0$ of (v).

Suppose that $R[x]$ is graded with a torsion-free group, such that $K$ has trivial grading. If the components of $H$ are either zero or graded homogeneous of fixed degree, then $a = 0$ is an image apex of $H$.

Suppose that the grading of $R[x]$ induces on $R$, and that $x_1,x_2,\ldots,x_m$ are graded homogeneous. If $h$ is graded homogeneous and $a = 0$ is an image apex of $H$, then $p_1 x_1 + p_2 x_2 + \cdots + p_m x_m$ can be taken graded homogeneous in (iii) and (v), and $q_1 x_1 + q_2 x_2 + \cdots + q_m x_m$ can be taken graded homogeneous in (v), under the condition that $L$ satisfies L{\"u}roth's theorem as an extension field of $K$ in (v).
\end{theorem}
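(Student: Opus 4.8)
\medskip
\noindent\emph{Proof plan for the final statement.}
I would argue by contradiction. Write $R=K[z_1,z_2,\ldots,z_N]$ and suppose the relevant vector — $b$ in case~(ii) and in the case $p=0$ of~(iv), and $p$ in case~(iii) and in the case $q=0$ of~(v) — has degree at most one; from this I will exhibit a projective image apex of $H$ lying in $K^m$, contradicting the standing hypothesis that $H$ has no projective image apex $p^{(1)}\in K^m$ over $K$. A convenient standing object is the value variety $S\subseteq\bar K^m$ of $H$, i.e.\ the Zariski closure of $\{H(\zeta,\xi):\zeta\in\bar K^N,\ \xi\in\bar K^m\}$; its ideal in $K[y]$ is $\p_K:=\{f\in K[y]:f(H)=0\}$, which is prime because $K[y]/\p_K\cong K[H]$ is a domain, and $\dim S=\trdeg_K K(H)$.

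\emph{Constant-gradient cases.} In~(ii) and in the case $p=0$ of~(iv) we have $H=\grad h=b\in R^m\setminus K^m$, so $\deg b\ge1$. Suppose $\deg b=1$ and write $b_i=\beta_{i0}+\sum_j\beta_{ij}z_j$ with all $\beta_{ij}\in K$; since the linear part is nonzero, some column $\mu:=(\beta_{ij_0})_i\in K^m$ is nonzero. The substitution $z_{j_0}\mapsto z_{j_0}+t$ is a $K[t]$-algebra automorphism of $R[t]$ taking $b$ componentwise to $b+t\mu$, so $\trdeg_K K(H+t\mu)=\trdeg_K K(b)=\trdeg_K K(H)$, and proposition~\ref{algdepprop}(ii) shows $\mu\in K^m\setminus\{0\}$ is a projective image apex of $H$ over $K$ — a contradiction. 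Hence $\deg b\ge2$.

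\emph{Cases (iii) and $q=0$ of~(v).} Here $h=g(p\tp x)+a\tp x$ with $g\in R[t]\setminus R$ and $a\in K^m$, so $H=\grad h=g'(p\tp x)\,p+a$ with $g'=\partial g/\partial t$. Assume $\deg p\le1$; write $p(z)=\pi_0+Mz$ with $\pi_0\in K^m$ and $M$ a matrix over $K$, and let $W\subseteq K^m$ be the $K$-linear span of $\pi_0$ together with the columns of $M$. Every value of $H-a$ is a scalar multiple of a value of $p$, so $S-a\subseteq W\otimes_K\bar K$; and if moreover $\deg_t g\ge2$, then for Zariski-generic $\zeta$ the one-variable polynomial $g'(\zeta,\cdot)$ is nonconstant, hence onto $\bar K$, so letting the linear form $p\tp x$ take all values in $\bar K$ shows $S-a$ is in fact the whole of $\overline{\{s\,p(\zeta):s\in\bar K,\ \zeta\in\bar K^N\}}=W\otimes_K\bar K$. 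Thus $S=a+(W\otimes_K\bar K)$ is an affine subspace, and $\dim W=\dim S=\trdeg_K K(H)\ge2$, so we may pick $\mu\in W\setminus\{0\}\subseteq K^m$. Since $\p_K$ is generated by the linear polynomials $\Lambda_\nu(y)-\Lambda_\nu(a)$ with the $\Lambda_\nu$ spanning $W^{\perp}$, while $\Lambda_\nu(H-a)=0$ identically and $\Lambda_\nu(\mu)=0$, we obtain $\Lambda_\nu(H+t\mu-a)=0$, hence $f(H+t\mu)=0$ for every $f\in\p_K$; so $\mu\in K^m$ is a projective image apex of $H$ over $K$ — a contradiction. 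Therefore $\deg p\ge2$ whenever $\deg_t g\ge2$.

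\emph{The delicate sub-case $\deg_t g=1$.} Then $g=c_1(z)t+c_0(z)$ and $H=c_1(z)p(z)+a=:b$ is a constant vector with $\trdeg_K K(b)=\trdeg_K K(H)\ge2$, so $b\in R^m\setminus K^m$ and $h=b\tp x+c_0=(b-a)\tp x+c_0+a\tp x$. Taking $\tilde g(t):=t+c_0$ and $\tilde p:=b-a$ re-presents $h=\tilde g(\tilde p\tp x)+a\tp x$ in the shape required in~(iii), respectively in the $q=0$ case of~(v); this new datum still obeys the transcendence-degree constraints there, because $b-a$ has transcendence degree $\ge2$ over $K$ (so $b-a\notin R\cdot K^m$) and the subfield of $\operatorname{Frac}R$ generated over $K$ by the coordinate ratios of $b-a$ has transcendence degree $\le\trdeg_K K(H)-1$ and is therefore rational. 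For this presentation $\deg\tilde p=\deg(b-a)=\deg b$, and $\deg b\ge2$ is exactly what was proved for the constant-gradient case. This settles all cases. The main obstacle in carrying this out rigorously is the bookkeeping of this last reduction — verifying that $(\tilde g,\tilde p,a)$ meets every condition attached to~(iii) and to~(v) — together with the step, used critically when $\deg_t g\ge2$, that $S$ fills the whole affine subspace $a+(W\otimes_K\bar K)$, so that $\p_K$ is generated in degree one.
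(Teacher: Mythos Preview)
Your argument for the constant-gradient cases (ii) and $p=0$ of (iv) is correct and essentially identical to the paper's.

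For (iii) and the $q=0$ case of (v), however, your case split on $\deg_t g$ introduces a real gap. In the sub-case $\deg_t g = 1$ you write $g = c_1(z)t + c_0(z)$, so $H = c_1 p + a =: b$, and then invoke the constant-gradient argument to get $\deg b \ge 2$. But $\deg b = \deg(c_1 p) = \deg c_1 + \deg p$, so all you obtain is $\deg c_1 + \deg p \ge 2$. When $\deg c_1 \ge 1$ this is perfectly compatible with $\deg p = 1$, and you have not ruled that out. Re-presenting $h$ with $\tilde{p} = b - a = c_1 p$ and observing $\deg \tilde{p} \ge 2$ does not help: the claim (as the paper proves it) is that \emph{every} $p$ occurring in a valid presentation of the form (iii) satisfies $\deg p \ge 2$, not merely that some presentation does. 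Your $(\tilde g, \tilde p)$ still has $\deg_t \tilde g = 1$, so you have not escaped the problematic case; you have only replaced one degree-$1$-in-$t$ presentation by another.

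The paper's route avoids the case split entirely and is much shorter: it uses the equivalence $f(H)=0 \Longleftrightarrow f(tp+a)=0$, which is already part of the conclusion of (iii) (and of (v) with $q=0$). If $\deg p = 1$, pick a variable $z_i$ of $R$ whose coefficient vector $p^{(1)}\in K^m$ in $p$ is nonzero, and substitute $z_i \mapsto z_i + u/t$ in $tp+a$; this sends $tp+a$ to $tp+a+up^{(1)}$, so $p^{(1)}$ is a projective image apex of $tp+a$, hence of $H$ by Proposition~\ref{relprop}. That single substitution handles all values of $\deg_t g$ at once and yields the universal statement directly. Your $\deg_t g \ge 2$ argument via the Zariski closure $S$ is correct but is a detour around this two-line computation.
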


\begin{corollary}[Main corollary] \label{gradrelcor}
Let $K$ be a field of characteristic zero and $h \in K[x]$. Let $r := \rk \hess h$ and let $s$ be the number of independent projective image apices of $\grad h$. Let $m := n - s$ and assume without loss of generality that $e_{m+1}, e_{m+2}, \ldots, e_n$ are $s$ independent projective image apices of $\grad h$.

Define $R := K[x_{m+1},x_{m+2},\ldots,x_n]$.
\begin{enumerate}[\upshape (i)]

\item
Suppose that $\grad h$ does not have an image apex. 
\begin{compactitem}
\item If $r \le 2$, then $r = 2$, $h$ is of the form of {\upshape (ii)} of 
theorem \ref{gradrel} and $s = 1$.
\item If $r = 3$ and $s \ge 1$, then $h$ is of the form of {\upshape (ii)} or 
{\upshape (iv)} of theorem \ref{gradrel} and $s = 2$ or $s = 1$ respectively.
\end{compactitem}

\item
Suppose that $\grad h$ does have an image apex. 
\begin{compactitem}
\item If $r \le 2$, then $h$ is of the form of {\upshape (i)} of theorem \ref{gradrel} 
and $s = r$.
\item If $r = 3$, then $h$ is of the form of {\upshape (i)} or {\upshape (iii)} of 
theorem \ref{gradrel} and $s = 3$ or $s = 1$ respectively.
\item If $r = 4$ and $s \ge 1$, then $h$ is of the form of {\upshape (i)}, 
{\upshape (iii)} or {\upshape (v)} of theorem \ref{gradrel} and $s = 4$, $s = 2$ or 
$s = 1$ respectively. 
\end{compactitem}

\item
Suppose that $h$ is homogeneous. If $r \ge 1$, then the zero vector is an image apex 
of $\grad h$.
\begin{compactitem}
\item If $1 \le r \le 3$, then $h$ is of the form of {\upshape (i)} of theorem 
\ref{gradrel} with $a_i = 0$ for all $i$ and $s = r$.
\item If $r = 4$, then $h$ is of the form of {\upshape (i)} or {\upshape (iii)} of 
theorem \ref{gradrel} with $a_i = 0$ for all $i$ and $s = 4$ or $s = 2$ respectively.
\end{compactitem}

\end{enumerate}
\end{corollary}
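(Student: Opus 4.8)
The plan is to reduce the entire corollary to the main theorem \ref{gradrel}, applied to the $m$-component gradient
$$
H := \bigl(\partial h/\partial x_1,\ \partial h/\partial x_2,\ \ldots,\ \partial h/\partial x_m\bigr)\in R[x_1,x_2,\ldots,x_m]^m,
$$
combined with the numerical restrictions of lemma \ref{slem}. First note that $\jac H$, taken with respect to $x_1,\ldots,x_m$, is the top-left $m\times m$ block of $\hess h$ and hence symmetric; that $H=\grad h$ with $h\in R[x_1,\ldots,x_m]$; and that $K$ is algebraically closed in the fraction field $L:=K(x_{m+1},\ldots,x_n)$ of $R$, since $L$ is purely transcendental over $K$. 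So the standing hypotheses of theorem \ref{gradrel} are available once the projective-image-apex condition is checked.

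After relabelling the coordinates so that the $s$ apex directions $e_{m+1},\ldots,e_n$ play the role of $e_1,\ldots,e_s$, proposition \ref{nopia} becomes applicable. Part (iii) of it gives $\trdeg_K K(H)=r-s$ (in particular $s\le r$). For part (ii): by part (iv) of theorem \ref{piacalc}, the projective image apices of $\grad h$ together with $0$ form an $s$-dimensional $K$-space, which by our choice is exactly the $K$-span of $e_{m+1},\ldots,e_n$; hence no nonzero $q\in K^m$ can be a projective image apex of $H$, as proposition \ref{nopia}(ii) would then make $(0,\ldots,0,q)$ a projective image apex of $\grad h$ having a nonzero entry outside that span. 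Thus $H$ has no projective image apex $p^{(1)}\in K^m$ over $K$, and theorem \ref{gradrel} applies. Finally, by part (i) of proposition \ref{nopia}, an image apex $\tilde a\in K^n$ of $\grad h$ restricts to an image apex of $H$ in $K^m$ and conversely; and should $\grad h$ only have an image apex over $K$ in some extension field, theorem \ref{iaLK} (applied with base field $L=K$, for which its hypothesis $\trdeg_K K(\,\cdot\,)\le\trdeg_L L(\,\cdot\,)$ is trivial) produces one already in $K^n$. Hence ``$\grad h$ has an image apex'' is equivalent to ``$H$ has an image apex $a\in K^m$ over $K$''.

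The three cases are now settled by reading off $s$ from lemma \ref{slem} and invoking the matching part of theorem \ref{gradrel} via $\trdeg_K K(H)=r-s$. In (i), $\grad h$ has no image apex, so lemma \ref{slem}(i) and $s\le r$ give $s<r$: when $r\le 2$ this forces $r=2$, $s=1$, $\trdeg_K K(H)=1$, and part (ii) of theorem \ref{gradrel} applies; when $r=3$ and $s\ge 1$ then $s\in\{1,2\}$, so $\trdeg_K K(H)\in\{2,1\}$ and part (iv), resp.\@ part (ii), of theorem \ref{gradrel} applies (note $H$ then has no image apex, by the equivalence above). In (ii), $\grad h$ has an image apex, hence so does $H$; lemma \ref{slem}(ii) forces $s=r$ for $r\le 2$, $s\in\{3,1\}$ for $r=3$, and $s\in\{4,2,1\}$ for $r=4$ (using $s\ge 1$), so $\trdeg_K K(H)\in\{0,2,3\}$ and parts (i), (iii), (v) of theorem \ref{gradrel} apply respectively. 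In (iii), $h$ is homogeneous with $r\ge 1$, so $\grad h$ is homogeneous of positive degree and therefore $0$ is an image apex of $\grad h$ (and of $H$); lemma \ref{slem}(iii) forces $s=r$ for $r\le 3$ and $s\in\{4,2\}$ for $r=4$, and since $H$ itself is homogeneous of positive degree its image apex may be taken to be $0$, which accounts for the extra conclusion $a_i=0$ for all $i$.

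The one place demanding attention is the ring-theoretic hypothesis of theorem \ref{gradrel} in each subcase: parts (i), (ii), (iv) require nothing, part (iii) requires $R$ to be a $\gcd$-domain, and part (v) requires $R$ to be Bogal. This always holds because $R=K[x_{m+1},\ldots,x_n]$ is a polynomial ring in exactly $s$ variables, hence a UFD and a fortiori a $\gcd$-domain; and part (v) is invoked only in the subcases with $s=1$, where $R=K[x_n]$ is a principal ideal domain, in particular a B{\'e}zout domain, and thus Bogal. I expect this bookkeeping, and the matching of each pair $(r,s)$ to the right part of theorem \ref{gradrel}, to be the only real work; the structural content resides entirely in lemma \ref{slem}, proposition \ref{nopia}, and theorem \ref{gradrel}.
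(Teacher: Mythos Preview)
Your proposal is correct and follows essentially the same route as the paper's own proof: define $H$ as the first $m$ partial derivatives, use proposition \ref{nopia} to see that $H$ has no projective image apex in $K^m$ and that $\trdeg_K K(H)=r-s$, transfer the (non)existence of an image apex between $\grad h$ and $H$, read off the admissible values of $s$ from lemma \ref{slem}, and plug into the matching clause of theorem \ref{gradrel}. The paper's proof is just a terse version of exactly this.

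Two minor remarks. First, your digression invoking theorem \ref{iaLK} is unnecessary: in the corollary the ambient field for $\grad h$ is $K$ itself, so ``has an image apex'' already means ``has one in $K^n$ over $K$'', and proposition \ref{nopia}(i) suffices. Second, on the Bogal hypothesis you are more conservative than the paper: you observe that part (v) is only needed when $s=1$, where $R=K[x_n]$ is a PID and hence B{\'e}zout; the paper instead asserts that $R=K[x_{m+1},\ldots,x_n]$ is Bogal for every $s$, relying on the extended L{\"u}roth theorem (any transcendence-degree-one subfield of a purely transcendental extension of $K$ is itself purely transcendental). Your version is perfectly adequate for the corollary, since (v) is indeed never invoked with $s\ge 2$.
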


\begin{proof}
Notice that $R$ is both a $\gcd$-domain and a Bogal domain (but not necessarily a
B{\'e}zout domain).
Let $H = \big(\parder{}{x_1} h, \parder{}{x_2} h, \ldots, \parder{}{x_m} h\big)$.
From proposition 1.2.9 of either \cite{MR1790619}, it follows that 
$r = \trdeg_K K(\grad h)$.

From (ii) of proposition \ref{nopia}, it follows that $H$ does not have a projective 
image apex $p \in K^m$ over $K$.
From (i) of proposition \ref{nopia}, it follows that $\grad{h}$ has an image apex
$a \in K^n$ over $K$, if and only if $H$ has an image apex $a \in K^m$ over $K$.  
From (iii) of proposition \ref{nopia}, it follows that $\trdeg_K K(H) = r - s$. 
Hence the results follow from lemma \ref{slem} and theorem \ref{gradrel}
\end{proof}

\begin{proof}[Proof of {\upshape (i)} of theorem \ref{gradrel}.]
As $\trdeg_K K(H) = 0$ and $K$ is algebraically closed in $L$, it
follows that $H \in K^m$. So if we take $a := H$, then it is straightforward
to verify that (i) is satisfied.
\end{proof}

\begin{proof}[Proof of {\upshape (ii)} of theorem \ref{gradrel}.]
Notice that 
$$
0 \le \trdeg_L L(H) \le \trdeg_K K(H) = 1
$$
We first show that $\trdeg_L L(H) = 0$. So assume that $\trdeg_L L(H) = 1$.
On account of (i) and (ii) of lemma \ref{slem}, $H$ has a projective image apex $p \in L^m$ over $L$, in particular over $K$. From (i) of theorem \ref{xsc}, it follows that $\trdeg_L L(H) \le \trdeg_K K(H) - 1 = 0$, so $\trdeg_L L(H) = 0$. 

On account of (i) of theorem \ref{rtheorem}, 
$h$ is of the form
$$
h = g + b_1 x_1 + b_2 x_2 + \cdots + b_m x_m
$$
where $g \in R$ and $b \in R^m$ is any image apex of $H$ over $L$ which
is contained in $R^m$. Since $\trdeg_K K(H) = 1$, it follows that 
$b = H \notin K^m$.

So it remains to show the first claim of (ii).
Hence suppose that $H$ does have an image apex $a \in K^m$ over $K$.
Since $\trdeg_L L(H) > -1 = \trdeg_K K(H) - 2$, 
it follows from (iii) of theorem \ref{xsc} that $a$ is an image apex of $H$ over $L$. 
Hence we can take $b = a$. This contradicts $b = H \notin K^m$, so $H$ does not have 
an image apex $a \in K^m$ over $K$.
\end{proof}

\begin{lemma} \label{iilem}
Let $L$ be an extension field of a field $K$ of characteristic zero, 
such that $K$ is algebraically closed in $L$. 
Suppose that $P \in \Mat_{m,n}(L)$ and $b \in L^m$, such that $r := \rk P > 0$. 

Then there exists a $Q \in \Mat_{m,r}(L)$, an $A \in \Mat_{r,n}(L)$, 
and a $\lambda \in L^n$, such that 
$$
P = Q \cdot A
$$
and such that the following holds, where 
$$
a := b - P \cdot \lambda
$$
and $z = (z_1,z_2,\ldots,z_r)$.
\begin{enumerate}[\upshape (i)]

\item $\trdeg_K K(Q z + a) = \trdeg_K K(P y + b)$

\item $\trdeg_K K(a) \le \trdeg_K K(Q z + a) - r$ and 
$\trdeg_K K(Q e_i) \le \trdeg_K$ \allowbreak $K(Q z + a) - r$ for all $i$.

\item If $\trdeg_K K(P y + b) \le r$, then $\trdeg_K K(Q z + a) = r$
and 
$$
(Q\,|\,a) \in \Mat_{m,r+1}(K)
$$

\item If $\trdeg_K K(P y + b) = r+1$, then either 
$$
(Q\,|\,a) \in \Mat_{m,r+1}(\tilde{K})
$$
for an extension field $\tilde{K}$ of $K$ for which $\trdeg_K \tilde{K} = 1$
and $\tilde{K} \subseteq L$, or 
$$
(Q\,|\,a) - p^{(1)}\big(b^{(1)}\big)\tp \in \Mat_{m,r+1}(K)
$$
for a nonzero $p^{(1)} \in K^m$ and a $b^{(1)} \in L^r \setminus K^r$.

\end{enumerate}
\end{lemma}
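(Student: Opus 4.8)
Write $d:=\trdeg_K K(Py+b)$ and $r:=\rk P>0$, let $W$ be the column space of $P$ over $L$, and put $V:=b+W$, an affine subspace of $L^m$ of dimension $r$ over $L$. Since the Zariski closure $\overline V$ of $V$ over $K$ is an irreducible $K$-variety of dimension $d$ whose base change to $L$ still has dimension $d$ and contains the $r$-dimensional irreducible set $V$, we have $d\ge r$.

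The first observation is that (i) holds for \emph{every} rank factorization $P=QA$ (so $Q\in\Mat_{m,r}(L)$ and $A\in\Mat_{r,n}(L)$ of rank $r$) and \emph{every} $\lambda\in L^n$. Indeed, $A$ having full row rank forces the column space of $Q$ to equal that of $QA=P$, i.e.\ $W$; and $P\lambda\in W$ gives $a+W=(b-P\lambda)+W=b+W=V$. So $z\mapsto Qz+a$ and $y\mapsto Py+b$ have the same image $V$, hence the same Zariski closure over $K$, hence $\trdeg_K K(Qz+a)=\trdeg_K K(Py+b)$. All the content therefore lies in choosing $Q$, $A$, $\lambda$ to realize (ii) and, in the respective cases, (iii) and (iv).

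For (ii) I would use two ingredients. \emph{(a) A cheap basis of $W$.} I claim $W$ admits a basis consisting of vectors lying in $(K')^m$ for some field $K'$ with $K\subseteq K'\subseteq L$ and $\trdeg_K K'\le d-r$; taking such a basis as the columns of $Q$ gives $\trdeg_K K(Qe_i)\le\trdeg_K K'\le d-r$ for all $i$, and one lets $A$ be any $r\times n$ matrix with $QA=P$. \emph{(b) A cheap point of $V$.} After permuting rows so that the first $r$ rows of $P$ are linearly independent over $L$, the projection $V\to L^r$ onto the first $r$ coordinates is a bijection, and the induced morphism $\overline V\to\mathbb A^r_K$ is dominant with generic fibre of dimension $d-r$. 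As $K$ is infinite (it has characteristic zero), the locus of $\mathbb A^r_K$ over which the fibre has dimension exactly $d-r$ contains a $K$-point $\xi_0$; choosing $\lambda$ so that $a$ is the unique point of $V$ over $\xi_0$, the point $a$ lies in a fibre which is a $K$-variety of dimension $d-r$, so $\trdeg_K K(a)\le d-r$. Together (a) and (b) give (ii).

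Part (iii) is then immediate: $\trdeg_K K(Py+b)\le r$ forces $d=r$, so by (ii) every $\trdeg_K K(Qe_i)$ and $\trdeg_K K(a)$ equals $0$; since $K$ is algebraically closed in $L$ this means $(Q\mid a)\in\Mat_{m,r+1}(K)$, and $\trdeg_K K(Qz+a)=d=r$ by (i). For (iv), $d=r+1$, so the field $K'$ of (a) has $\trdeg_K K'\le1$ and the chosen $a$ has $\trdeg_K K(a)\le1$. If $K'$ and $K(a)$ generate a field $\tilde K$ with $\trdeg_K\tilde K\le1$ --- e.g.\ whenever $V$ has a point defined over $K'$, which one then takes for $a$ --- then $(Q\mid a)\in\Mat_{m,r+1}(\tilde K)$ and necessarily $\trdeg_K\tilde K=1$ (it cannot be $0$, for then $V$ would be defined over $K$ and $d=r$): this is the first alternative. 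Otherwise one transcendental, say $t$, is present, and I would describe $W$ and the coset $V$ over the one-variable field $K(t)$, normalizing a defining matrix of $W$ over $K[t]$ by Corollary~\ref{weaksmithcor}, and conclude that the entries of a suitable $(Q\mid a)$ that are not in $K$ all lie along a single $K$-rational direction $p^{(1)}$, i.e.\ $(Q\mid a)-p^{(1)}(b^{(1)})\tp\in\Mat_{m,r+1}(K)$ with $b^{(1)}$ not defined over $K$: this is the second alternative. The main obstacle is the sharp bound in ingredient (a) --- that the field of definition of the column space $W$ has transcendence degree at most $d-r$ over $K$ --- together with the dichotomy in (iv); the factorization bookkeeping, the fibre-dimension argument in (b), and the reduction of (iv) to the one-variable case are routine.
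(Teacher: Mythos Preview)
Your ingredient (a) is false as stated, and this breaks your proofs of (ii) and (iv). You claim that the column space $W$ of $P$ admits a basis lying in $(K')^m$ for some $K'$ with $\trdeg_K K'\le d-r$; but any such basis has entries generating a field that contains the field of definition of $W$ in the Grassmannian (the Pl\"ucker coordinates are polynomials in those entries), and that field can be much larger than $d-r$. Concretely, take $K\subseteq L$ with $s_1,s_2,t_1,t_2\in L$ algebraically independent over $K$, set $r=2$, $m=4$, $a=0$ and
\[
Q=\begin{pmatrix}1&0\\0&1\\ s_1&t_1\\ s_2&t_2\end{pmatrix}.
\]
Then $K(Qz)=K(z_1,z_2,s_1z_1+t_1z_2,s_2z_1+t_2z_2)$ has transcendence degree $d=4$ over $K$, so $d-r=2$; yet the field of definition of $W$ is $K(s_1,s_2,t_1,t_2)$, of transcendence degree $4$. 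The lemma only asserts $\trdeg_K K(Qe_i)\le d-r$ for each column \emph{individually}, which holds here because each column involves only two of the four parameters; there is no common $K'$ of the kind you posit.

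The paper avoids this by committing to a specific $Q$ and $a$: it takes $Q$ to be the reduced column echelon form of $P$ (leading $r\times r$ block equal to $I_r$) and chooses $\lambda$ so that $a_1=\cdots=a_r=0$. With this normalization the substitution $z_j=0$ literally deletes the $j$-th row and column of $(Q\mid a)$ and forces $y_j$ into the vanishing ideal, so the transcendence degree drops strictly; induction on $r$ gives the bound on $\trdeg_K K(a)$, and a leading-coefficient argument on $Qe_iz_i+a$ gives the bound on $\trdeg_K K(Qe_i)$. Your ingredient (b) is correct and corresponds exactly to the paper's choice $\xi_0=0$, but it does not help with the column bound.

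For (iv), your sketch relies on the same false (a) to produce a common $K'$, and the appeal to Corollary~\ref{weaksmithcor} does not by itself yield the rank-one correction. The paper's argument for (iv) is quite different: it takes a minimal-degree relation $f\in K[y_1,\ldots,y_{r+2}]$ vanishing on $Qz+a$, forms $g=\sum_i Q_{i1}\,\partial f/\partial y_i$ (so $g(Qz+a)=\partial_{z_1}f(Qz+a)=0$), and splits on whether $g=0$ or not. If $g\ne 0$, minimality of $f$ together with Lemma~\ref{ptLK} forces $\trdeg_{\tilde K}\tilde K(Qz+a)\le r$ for $\tilde K$ the algebraic closure of $K(Qe_1)$ in $L$, and (iii) over $\tilde K$ gives the first alternative. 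If $g=0$, Lemma~\ref{fLK} kills $\partial f/\partial y_1$ and $\partial f/\partial y_{r+1}$, whence row $r+2$ of $(Q\mid a)$ is over $K$; iterating over the rows below $r+1$ yields the rank-one form with $p^{(1)}=e_{r+1}$.
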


\begin{proof}
Assume without loss of generality that the leading principal minor matrix
$M$ of size $r$ of $P$ has rank $r$. Take for $A$ the matrix consisting of 
the first $r$ rows of $P$. Take for $Q$ the product of the matrix 
consisting of the first $r$ columns of $P$ and $M^{-1}$. Then the leading
principal minor matrix of size $r$ of $Q$ is the identity matrix.
From $\rk M = \rk P$, it follows that the reduced column echelon form of $P$
is of the form $(Q\,|\,\emptyset)$. Since the first $r$ rows of $Q \cdot A$
are equal to those of $P$ in addition, we can deduce that $P = Q \cdot A$ 
as a whole. 

Define $\beta \in L^r$ by $\beta_i = b_i$ for all $i \le r$ and let
$\lambda = (M^{-1}\beta,0^{r+1},0^{r+2},\ldots,0^n)$. By definition of $Q$,
$$
a = b - P \lambda = b - Q \beta 
$$
Since the leading principal minor of $Q$ equals $I_r$, it follows that
$a_1 = a_2 = \cdots = a_r = 0$.
\begin{enumerate}[\upshape (i)]

\item
Notice that
$$
\trdeg_K K\big(P y + b\big) = \trdeg_K K\big(P (y-\lambda) + b\big) 
= \trdeg_K K\big(P y + a\big)
$$
Since $M$ is the matrix of the first $r$ columns of $A$, it follows
that $A$ is right invertible. Hence
$$
\trdeg_K K\big(Q z + a\big) = \trdeg_K K\big(Q (A y) + a\big) 
= \trdeg_K K\big(P y + a\big)
$$
So $\trdeg_K K\big(P y + b\big) = \trdeg_K K\big(Q z + a\big)$.

\item
Let $j \le r$. Take $\p := \big(f \in K[y]\,\big|\,f(Qz+a) = 0\big)$ and 
$\q := \big(f \in K[y]\,\big|\,f(Qz|_{z_j = 0}+a) = 0\big)$.
Then $\p \subseteq \q$. As $\p \not\ni y_j \in \q$, it follows that
$\p \subsetneq \q$. So
$$
\trdeg_K K\big(Qz|_{z_j=0}+a\big) \le \trdeg_K K(Qz+a) - 1 
$$
Since $Q_j = e_j\tp$ and $a_j = 0$ for all $j \le r$, the substitution 
$z_j = 0$ comes down to removing the $j$-th row and $j$-th column of 
$(Q\,|\,a)$. Hence it follows by induction on $r$ that 
\begin{align*}
\trdeg_K K(Q z|_{z=0}+a) &\le \trdeg_K K(Qz+a) - r 
\intertext{and that for all $i$,}
\trdeg_K K(Q z|_{z=z_ie_i}+a) &\le \trdeg_K K(Qz+a) - (r - 1)
\end{align*}
So $\trdeg_K K(a) \le \trdeg_K K(Q z + a) - r$ has been proved.

Hence it remains to show that $\trdeg_K K(Q e_i) \le \trdeg_K 
K(Qz + a) - r$. Let $s := \trdeg_K K(Qz+a)$ and suppose that 
$$
\trdeg_K K(Q e_i) > \trdeg_K K(Qz+a) - r = s - r
$$
Since the first $r$ coordinates of $Q e_i$ are contained in $K$, 
we may assume without loss of generality that $Q_{(r+1)i}, 
Q_{(r+2)i}, \ldots, Q_{si}, Q_{(s+1)i}$ are algebraically 
independent over $K$. 

Since $\trdeg_K K(Qe_i z_i+a) \le \trdeg_K K(Qz+a) - (r - 1) = s + 1 - r$, 
it follows that there exists a nonzero $f \in K[y_i,y_{r+1},y_{r+2},
\ldots,y_{s},y_{s+1}]$ such that $f(Q e_i z_i+a) = 0$. 
Looking at the coefficient of $z_i^{\deg f}$, we see that there exist a 
homogeneous polynomial $g$ over $K$ such that
$$
g\big(1,Q_{(r+1)i}, Q_{(r+2)i}, \ldots, Q_{si}, Q_{(s+1)i}\big) = 0
$$
This contradicts that $Q_{(r+1)i}, Q_{(r+2)i}, \ldots, Q_{si}, 
Q_{(s+1)i}$ are al\-ge\-brai\-cal\-ly independent over $K$. 

\item
Assume that $\trdeg_K K(Qz+a) \le r$. From (ii), it follows that 
$$
\trdeg_K K(a) \le \trdeg_K K(Qz+a) - r \le 0
$$ 
Consequently, $\trdeg_K K(a) = 0$ and $\trdeg_K K(Qz+a) = r$. 
Similarly, $\trdeg_K K(Q e_i) = 0$ for all $i$. 
Now (iii) follows, because $K$ is algebraically closed in $L$.
 
\item
The case where $Q \in \Mat_{m,r}(K)$ follows from (ii), so assume the opposite.
Assume without loss of generality that $Q_{(r+1)1} \notin K$. If $Q_{(r+2)1}$
is linearly dependent over $K$ on $Q_{(r+1)1}$ and $Q_{11}$, the we can clean
it by row operations over $K$ in $(Q\,|\,a)$. Since such row operations do
not affect the claim of (iv), we may assume that either $Q_{(r+2)1} = 0$ or 
$Q_{(r+2)1}$ is linearly independent over $K$ of $Q_{(r+1)1}$ and $Q_{11}$. 

Since $Q_{11} = 1$, $Q_{(r+1)1} \notin K$ and $K$ is algebraically closed in $L$,
it follows that $\trdeg_K K(Q_{11},Q_{(r+1)1}) = 1$. From (ii) with $m = r+1$
and $i = 1$, we deduce that the first $r+1$ components of $Q z + a$ are 
algebraically independent over $K$. 

Suppose that $\trdeg_K K(Qz+a) = r+1$. Then the first $r+1$ components of 
$Qz + a$ are a transcendence basis over $K$ of $K(Qz+a)$ and there exists 
a nonzero $f \in K[y_1,y_2,\ldots,y_r,y_{r+1},y_{r+2}]$, such that $f(Qz+a) = 0$.
Take $f$ as such of minimum degree and define
$$
g := \sum_{i=1}^m Q_{i1} \parder{}{y_i} f 
= \parder{}{y_1} f + Q_{(r+1)1} \parder{}{y_{r+1}} f + Q_{(r+2)1} \parder{}{y_{r+2}} f 
$$
Then
$$
g(Qz+a) = \parder{}{z_1} f(Qz+a) = 0
$$
We distinguish two cases.
\begin{itemize}

\item \emph{$g \ne 0$.} \\
Let $\tilde{K}$ be the algebraic closure of $K(Q e_1)$ in $L$. Then 
$g \in \tilde{K}[y]$ and $\tilde{K}$ is algebraically closed in $L$.
Furthermore, it follow from (ii) and (i) that
$$
\trdeg_K \tilde{K} \le \trdeg_K K(Q z + a) - r = \trdeg_K K(P z + b) - r \le 1
$$
If $\trdeg_{\tilde{K}} \tilde{K}(Qz+a) \le r$, then it follows from (iii) 
that $Q \in \Mat_{m,r}(\tilde{K})$ and $a \in \tilde{K}^m$, which 
gives (iv).

In order to show that $\trdeg_{\tilde{K}} \tilde{K}(Qz+a) \le r$,
suppose that $\trdeg_{\tilde{K}} \allowbreak \tilde{K}(Qz+a) \ge r+1$. 
Then 
$$
\p := \big(\tilde{g} \in \tilde{K}[y_1,y_2,\ldots,y_r,y_{r+1},y_{r+2}]\,\big|\,
      \tilde{g}(Qz+a) = 0\big)
$$
is principal. Furthermore, $f,g \in \p$. Since $K$ is algebraically closed in $\tilde{K}$, it follows from lemma \ref{ptLK} and the minimality of $\deg f$ that $\p = (f)$. This contradicts $\deg g < \deg f$, so $\trdeg_{\tilde{K}} \tilde{K}(Qz+a) \le r$ indeed. 

\item \emph{$g = 0$.} \\
Recall that either $Q_{(r+2)1} = 0$ or $Q_{(r+2)1}$ is linearly independent 
over $K$ of $Q_{(r+1)1}$ and $Q_{11}$. In both cases, it follows from lemma 
\ref{fLK} and the minimality of $\deg f$ that
$$
\parder{}{y_1} f = \parder{}{y_{r+1}} f = 0
$$
So $f \in K[y_2,y_3,\ldots,y_r,y_{r+2}]$. As $z_1,z_2,z_3,\ldots,z_r$ are
algebraically independent over $K$, we see that $(Q z+a)_{r+2}$ is algebraically
dependent over $K$ on $z_1,z_2,z_3,\ldots,z_r$. From (iii), it follows that
$Q_{r+2} \in \Mat_{1,r} (K)$ and $a_{r+2} \in K$.

We can interchange row $r+2$ of $Q$ with any other row after row $r+1$.
If we can do this in such a way that $g \ne 0$ after it, then (iv) follows from 
the case above. Otherwise, we can conclude that $Q_i \in \Mat_{1,r}(K)$ and 
$a_i \in K$ for all $i \ne r+1$. So if we take $p^{(1)} = e_{r+1}$ and 
$b^{(1)} = \big((Q\,|\,a)_{r+1}\big)\tp$, then  $b^{(1)} \notin K^r$ and
$(Q\,|\,a) - p^{(1)}\big(b^{(1)}\big)\tp \in \Mat_{n,r+1}(K)$, which gives (iv) as well.
\qedhere

\end{itemize}
\end{enumerate}
\end{proof}

\begin{proof}[Proof of {\upshape (iii)} of theorem \ref{gradrel}.]
Notice that
$$
0 \le \trdeg_L L(H) \le \trdeg_K K(H) = 2
$$
We first show that $\trdeg_L L(H) \le 1$. So assume that $\trdeg_L L(H) = 2$.
On account of (i) and (ii) of lemma \ref{slem}, $H$ has a projective image apex $p \in L^m$ over $L$, in particular over $K$. From (i) of theorem \ref{xsc}, it follows that $\trdeg_L L(H) \le \trdeg_K K(H) - 1 = 1$.

Suppose that $R$ is a $\gcd$-domain. On account of (ii) of theorem \ref{rtheorem}, $h$ is of the form
$$
h = g(p_1 x_1 + p_2 x_2 + \cdots + p_m x_m) + b_1 x_1 + b_2 x_2 + \cdots + b_m x_m
$$
where $g \in R[t]$ and $b \in R^m$ is any image apex of $H$ over $L$ which is
contained in $R^m$. Furthermore, $p \in R^m$ and 
$$
f(H) = 0 \Longleftrightarrow f\big(tp + (1-u)H + ub\big) = 0
\Longleftrightarrow f(tp + b) = 0
$$
for every $f \in L[y]$. So if $p \ne 0$, then $p$ is a projective image apex of $H$ 
over $L$. From (iv) of theorem \ref{piacalc}, it follows that $p \notin R \cdot K^m$
if $p \ne 0$.

If $a$ is an image apex of $H$ over $L$, then we can take $b = a$, so 
$$
f(H) = 0 \Longleftrightarrow f(tp+a) = 0
$$
for every $f \in K[y]$. Suppose next that $a$ is not an image apex of $H$ over $L$. From  (iii) 
of theorem \ref{xsc}, it follows that $\trdeg_L L(H) \le \trdeg_K K(H) - 2 = 0$.
On account of (i) of theorem \ref{rtheorem}, $h$ is of the form
$$
h = g + b_1 x_1 + b_2 x_2 + \cdots + b_m x_m
$$
where $g \in R$ and $b \in R^m$. So if we define $p = b - a$, then 
$$
h = g + p_1 x_1 + p_2 x_2 + \cdots + p_m x_m + a_1 x_1 + a_2 x_2 + \cdots + a_m x_m
$$
and $H = p + a$. Since $tH + (1-t)a = t(H-a) + a = tp + a$, it follows that 
\begin{align*}
f(H) = 0 &\Longrightarrow f\big((1-t)H + ta\big) = 0 \\ &\Longrightarrow 
f\big(tH + (1-t)a\big) = 0 \Longrightarrow f(tp + a) = 0
\end{align*}
for every $f \in K[y]$. The converse follows from $H = p + a$, so
$$
f(H) = 0 \Longleftrightarrow f(tp+a) = 0
$$
for every $f \in K[y]$, regardless of whether $a$ is an image apex of $H$ over $L$
or not.

Since $\trdeg_K K(a) = 0 < 1 = \trdeg_K K(H)$ we see that $p \ne 0$. From (ii) of 
lemma \ref{iilem}, it follows that there exists an $\alpha \in L$ such that
$\trdeg_K K(\alpha^{-1} p) \le \trdeg_K K(H) - 1 = 1$. So there exists a $\gamma \in L$
such that $\trdeg_{K(\gamma)} K(\gamma)(\alpha^{-1} p) = 0$. As $p \ne 0$, we
conclude that
$$
\trdeg_{K(\gamma)} K(\gamma)(tp) = 1
$$
which completes the proof of (iii) of theorem \ref{gradrel}.
\end{proof}

\begin{proof}[Proof of {\upshape (iv)} of theorem \ref{gradrel}.]
Just as in the proof of (iii) of theorem \ref{gradrel}, we can deduce that
$h$ is of the given form, and that $f(H) = 0 \Longleftrightarrow f(tp+b) = 0$
for every $f \in L[y]$. Furthermore, $p \notin R \cdot K^m$ if $p \ne 0$.

Since $f(H) = 0 \Longleftrightarrow f(tp+b) = 0$ for every $f \in K[y]$,
it follows that
$$
\trdeg_K K(tp+b) = \trdeg_K K(H) = 2
$$
If $p = 0$, then $\trdeg_K K(b) = 2$ and we take for $\gamma$ any element of 
$K(b) \setminus K$, so
$$
\trdeg_{K(\gamma)} K(\gamma)(tp) = 0 \qquad \mbox \qquad 
\trdeg_{K(\gamma)} K(\gamma)(b-\lambda) = 1
$$
for every $\lambda \in L$. Hence assume from now on that $p \ne 0$.

Since $p \ne 0$ and $\trdeg_K K(tp+b) = 2$, it follows from (iv) of lemma \ref{iilem}
that there are $\alpha, \lambda \in L$ such that one of the following is satisfied.
\begin{itemize}

\item \emph{$\alpha^{-1} p, b - \lambda p \in \tilde{K}^m$ for an extension field 
$\tilde{K}$ of $K$ for which $\trdeg_K \tilde{K} = 1$ and $\tilde{K} \subseteq L$.}

Take any $\gamma \in \tilde{K} \setminus K$. Since 
$\gamma \in \tilde{K} \setminus K \subseteq L \setminus K$ and $K$ is algebraically
closed in $L$, it follows that $\gamma$ is a transcendence basis of $\tilde{K}$
over $K$. Consequently, $\trdeg_{K(\gamma)}  K(\gamma)(\alpha^{-1} p) = 0$ and
$\trdeg_{K(\gamma)}  K(\gamma)(b - \lambda p) = 0$. As $p \ne 0$, we
conclude that
$$
\trdeg_{K(\gamma)} K(\gamma)(tp) = 1 \qquad \mbox \qquad 
\trdeg_{K(\gamma)} K(\gamma)(b-\lambda p) = 0
$$

\item \emph{There are nonzero $p^{(1)} \in K^m$, $b^{(1)} \in L^2 \setminus K^2$, 
such that $\alpha^{-1} p - b^{(1)}_1 p^{(1)} \in K^m$ and 
$b - \lambda p - b^{(1)}_2 p^{(1)} \in K^m$.}

We prove that this case cannot occur, by showing that $p^{(1)}$ is a 
projective image apex of $H$ over $K$. Let $q^{(1)} := \alpha^{-1} p - b^{(1)}_1 p^{(1)}$ and $a^{(1)} := b - \lambda p - b^{(1)}_2 p^{(1)}$. Then
\begin{align*}
2 = \trdeg_K K(t,u) &\ge \trdeg_K K\big(t q^{(1)} + a^{(1)} + u p^{(1)}\big) \\
&\ge \trdeg_K K\big(t\alpha q^{(1)} + a^{(1)} + (u+t\alpha b^{(1)}_1) p^{(1)}\big) \\
&= \trdeg_K K\big(tp + a^{(1)} +  u p^{(1)}\big) \\
&\ge \trdeg_K K\big((t+\lambda)p + a^{(1)} + (u+b^{(1)}_2) p^{(1)}\big) \\
&= \trdeg_K K\big(tp+b+up^{(1)}\big) 
\end{align*}
Since $tp+b$ is obtained from $tp+b+up^{(1)}$ by substituting $u = 0$ in addition, it
follows that $p^{(1)}$ is a projective image apex over $K$ of $tp+b$. From
$f(H) = 0 \Longleftrightarrow f(tp+b) = 0$ for every $f \in K[y]$ and proposition 
\ref{relprop}, we deduce that $p^{(1)}$ is a projective image apex of $H$ over $K$.
\qedhere

\end{itemize}
\end{proof}

If $R$ is a polynomial ring and $\lambda = 0$ in (iv) of theorem \ref{gradrel}, then 
one can show that there exists a $\gamma \in R$ such that $b \in K[\gamma]^m$ and
$p \in R \cdot K[\gamma]^m$. 

But $\lambda$ in (iv) of theorem \ref{gradrel} does not need to be zero. 
Take for instance $R = K[x_4,x_5]$, $g = t^2$, $\gamma = (x_4x_5 + 1)/x_5$, 
$$
p := \left( \begin{array}{c} -x_5^3 \\ (x_4 x_5 + 1)^3 \\ (x_4 x_5 + 1)^2 x_5 \end{array} \right)
\qquad \mbox{and} \qquad b := \left( \begin{array}{c} 
(-x_4x_5 + 1) x_5 \\ x_4^2 (x_4 x_5 + 1)^2 \\ x_4^2 (x_4 x_5 + 1) x_5 \end{array} \right)
$$
Then $\trdeg_K K(b) = 2$, so $\lambda \ne 0$. One can verify that
$$
p = x_5^3 \left( \begin{array}{c} -1 \\ \gamma^3 \\ \gamma^2 \end{array} \right)
\qquad \mbox{and} \qquad b - \frac{x_4^2}{x_4 x_5 + 1} p = 
\left( \begin{array}{c} \gamma^{-1} \\ 0 \\ 0 \end{array} \right)
$$
so $\lambda = x_4^2/(x_4 x_5 + 1)$ is possible.

\begin{proof}[Proof$\!$\nopunct] 
\emph{of the case where $R$ is a B{\'e}zout domain of {\upshape (v)} of theorem \ref{gradrel}.}
Notice that
$$
0 \le \trdeg_L L(H) \le \trdeg_K K(H) = 3
$$
We first show that $\trdeg_L L(H) \le 2$. So assume that $\trdeg_L L(H) = 3$.
From (ii) of theorem \ref{xsc}, it follows that $a$ is an image apex of $H$ over 
$L$ as well. On account of (ii) of lemma \ref{slem}, $H$ has a projective image apex $p \in L^m$ over $L$, in particular over $K$. From (i) of theorem \ref{xsc}, it follows that $\trdeg_L L(H) \le \trdeg_K K(H) - 1 = 2$.

Suppose that $R$ is a $\gcd$-domain. We next show that $H$ has an image apex $b \in R^m$ over $L$. If $\trdeg_L L(H) \le 1$, then $H$ has an image apex $b \in R^m$ over $L$ on account of (ii) of theorem \ref{rtheorem}, so assume that $\trdeg_L L(H) = 2$. Then it follows from (i) of lemma \ref{slem} that $H$ has a projective image apex $p \in L^m$ over $L$, in particular over $K$. On account of (iii) of theorem \ref{xsc}, $a$ is an image apex of $H$ over $L$. So $\trdeg_L L(H) \le 2$ and $H$ has an image apex $b \in R^m$ over $L$.

Suppose that $R$ is a B{\'e}zout domain. On account of (iii) of theorem \ref{rtheorem}, $h$ is of the form
\begin{align*}
h = g(p_1 x_1 + p_2 x_2 + \cdots + p_m x_m, q_1 x_1 + q_2 x_2 + \cdots + q_m x_m) + {} \\
b_1 x_1 + b_2 x_2 + \cdots + b_m x_m 
\end{align*}
where $g \in R[t,u]$. Furthermore, $p, q \in R^m$, such that 
$$
f(H) = 0 \Longleftrightarrow f\big(tp + uq + (1-v)H +vb\big) = 0
\Longleftrightarrow f(tp + uq + b) = 0
$$
for every $f \in L[y]$, where $v$ is a variable. 

It follows that every
$L$-linear combination of $p$ and $q$, which is not the zero vector, 
is a projective image apex of $H$ over $L$. Furthermore, we can take $q = 0$
if $p$ and $q$ are dependent over $L$. From (iv) of theorem \ref{piacalc},
we deduce that $q = 0$ if some nontrivial $L$-linear combination of $p$ and $q$ 
is contained in $K^m$.

If $a$ is an image apex of $H$ over $L$, then we can take $b = a$, so 
$$
f(H) = 0 \Longleftrightarrow f(tp+uq+a) = 0
$$
for every $f \in K[y]$. Suppose next that $a$ is not an image apex of $H$ over $L$. 
From (iii) of theorem \ref{xsc}, it follows that $\trdeg_L L(H) \le \trdeg_K K(H) - 2 = 1$.
On account of (ii) of theorem \ref{rtheorem}, $h$ is of the form
$$
h = g(p_1 x_1 + p_2 x_2 + \cdots + p_m x_m) + b_1 x_1 + b_2 x_2 + \cdots + b_m x_m
$$
where $g \in R[t]$ and $b \in R^m$. Furthermore, $f(H) = 0 \Longleftrightarrow f(tp + b) = 0$
for every $f \in L[y]$. From proposition \ref{relprop}, it follows that
$$
f(H) = 0 \Longleftrightarrow f\big((1-u)(tp + b) + ua\big) = 0
$$
for all $f \in K[y]$. If we substitute $t = t/(1-u)$, then we obtain
$$
f(H) = 0 \Longleftrightarrow f\big(tp + (1-u)b + ua\big) = 0
$$
for all $f \in K[y]$. So if we define $q = b - a$, then 
\begin{align*}
h = g(p_1 x_1 + p_2 x_2 + \cdots + p_m x_m) + q_1 x_1 + q_2 x_2 + \cdots + q_m x_m + {} \\
a_1 x_1 + a_2 x_2 + \cdots + a_m x_m
\end{align*}
and, since $(1-u)b + ua = (1-u)q + a$, 
$$
f(H) = 0 \Longleftrightarrow f\big(tp + (1-u)q + a\big) = 0 
\Longleftrightarrow f(tp + uq + a) = 0
$$
for every $f \in K[y]$. Consequently,
$$
f(H) = 0 \Longleftrightarrow f(tp+uq+a) = 0
$$
for every $f \in K[y]$, regardless of whether $a$ is an image apex of $H$ over $L$
or not.
\end{proof}

\begin{proof}[Proof$\!$\nopunct] 
\emph{of the case where $R = L$ of {\upshape (v)} of theorem \ref{gradrel}.}
As $L$ is a B{\'e}zout domain, it suffices to prove that 
$\trdeg_{K(\gamma)} K(\gamma)(tp) + \trdeg_{K(\gamma)} K(\gamma)(uq) = 2$
for some $\gamma \in L$.

Since $f(H) = 0 \Longleftrightarrow f(tp+uq+a) = 0$ for every $f \in K[y]$
if $R$ is a B{\'e}zout domain, it follows that
$$
\trdeg_K K(tp+uq) = \trdeg_K K(H) = 3
$$
If $q = 0$, then $\trdeg_K K(tp) = 3$ and we take for $\gamma$ any element of 
$K(p) \setminus K$, so
$$
\trdeg_{K(\gamma)} K(\gamma)(tp) = 2 \qquad \mbox \qquad 
\trdeg_{K(\gamma)} K(\gamma)(uq) = 0
$$
Hence assume from now on that $q \ne 0$.

Since $p$ and $q$ are independent over $L$ and $\trdeg_K \allowbreak K(tp+uq) = 3$, 
it follows from (iv) of lemma \ref{iilem} that we may assume that 
one of the following is satisfied.
\begin{itemize}

\item \emph{$p, q \in \tilde{K}^m$ for an extension field 
$\tilde{K}$ of $K$ for which $\trdeg_K \tilde{K} = 1$ and $\tilde{K} \subseteq L$.}

Take any $\gamma \in \tilde{K} \setminus K$. Since 
$\gamma \in \tilde{K} \setminus K \subseteq L \setminus K$ and $K$ is algebraically
closed in $L$, it follows that $\gamma$ is a transcendence basis of $\tilde{K}$
over $K$. Consequently, $\trdeg_{K(\gamma)}  K(\gamma)(p) = 0$ and
$\trdeg_{K(\gamma)}  K(\gamma)(q) = 0$. As $p,q \ne 0$, we
conclude that
$$
\trdeg_{K(\gamma)} K(\gamma)(tp) = 1 \qquad \mbox \qquad 
\trdeg_{K(\gamma)} K(\gamma)(uq) = 1
$$

\item \emph{There are $p^{(1)} \in K^m$, $b^{(1)} \in L^2 \setminus K^2$, 
such that $p - b^{(1)}_1 p^{(1)} \in K^m$ and 
$q - b^{(1)}_2 p^{(1)} \in K^m$.}

We prove that this case cannot occur, by showing that $p^{(1)}$ is a 
projective image apex of $H$ over $K$. Let $v$ be a variable, and define $q^{(1)} = p - b^{(1)}_1 p^{(1)}$ and $q^{(2)} = q - b^{(1)}_2 p^{(1)}$. Then
\begin{align*}
3 = \trdeg_K K(t,u,v) &\ge \trdeg_K K\big(t q^{(1)} + u q^{(2)} + a + v p^{(1)}\big) \\
&\ge \trdeg_K K\big(t q^{(1)} + u q^{(2)} + a + (v + t b^{(1)}_1 + u b^{(1)}_2) p^{(1)}\big) \\
&= \trdeg_K K\big(tp+uq+a\big)
\end{align*}
Since $tp+uq+a$ is obtained from $tp+uq+a+vp^{(1)}$ by substituting $v = 0$ in addition, it
follows that $p^{(1)}$ is a projective image apex over $K$ of $tp+uq+a$. From
$f(H) = 0 \Longleftrightarrow f(tp+uq+a) = 0$ for every $f \in K[y]$ and proposition 
\ref{relprop}, we deduce that $p^{(1)}$ is a projective image apex of $H$ over $K$.
\qedhere

\end{itemize}
\end{proof}

\begin{proof}[Proof$\!$\nopunct] 
\emph{of the case where $L$ satisfies L{\"u}roth's theorem as
an extension field of $K$ of {\upshape (v)} of theorem \ref{gradrel}.}
We reduce to the case $R=L$. So assume that $g \in L[t,u]$ and $p,q \in L^m$ 
satisfy (v) of theorem \ref{gradrel} for $R = L$. If $q = 0$, then it follows
from (ii) of lemma \ref{gcddomain} that we can choose $g \in R[t]$ and $p \in R^m$.
So assume from now on that $q \ne 0$.

Since $p \ne 0$ as well, and $\trdeg_{K(\gamma)} K(\gamma)(tp) + 
\trdeg_{K(\gamma)} K(\gamma)(uq) = 2$, we deduce that 
$$
\trdeg_{K(\gamma)} K(\gamma)(tp) = \trdeg_{K(\gamma)} K(\gamma)(uq) = 1
$$
Now replace $p$ by $\lambda p$ for some $\lambda \in L$, to obtain
$p_i \in K(\gamma) \setminus \{0\}$ for some $i$. 
Then $t$ is algebraic over $K(\gamma)(p + tp)$, so $p$ is not a projective image
apex over $K(\gamma)$ of $p$ on account of (ii) of proposition \ref{algdepprop}. 
From (ii) of proposition \ref{trdegprop}, we infer that
$$
\trdeg_{K(\gamma)} K(\gamma)(p) \ne \trdeg_{K(\gamma)} K(\gamma)(p+tp) = 1
$$
so $\trdeg_{K(\gamma)} K(\gamma)(p) = 0$. Similarly, we may assume that 
$\trdeg_{K(\gamma)} K(\gamma)(q) = 0$.

Consequently, $\trdeg_K K(p,q) = \trdeg_K K(\gamma) = 1$. Since
$L$ satisfies L{\"u}roth's theorem as an extension field of $K$, we can choose
$\gamma \in L$ such that $p,q \in K(\gamma)^m$. Now multiply $p$ by
an element of $K[\gamma]$, to obtain that $p \in K[\gamma]^m$. Similarly, we may 
assume that $q \in K[\gamma]^m$. 

Notice that $\gamma \notin K$. Since $K$ is algebraically closed in $L$,
we can view $\gamma$ as a variable over $K$.
From lemma \ref{weaksmith}, it follows that we may assume that $(p\,|\,q)$ is left 
invertible over $K[\gamma]$. From corollary \ref{weaksmithcor}, it follows
that we may assume that the coefficient vector $\hat{p} \in K^m$ of 
$\gamma^{\deg_{\gamma} p}$ of $p$ and the coefficient vector $\hat{q} \in K^m$ of 
$\gamma^{\deg_{\gamma} q}$ of $q$ are independent vectors.

Since $R$ is a $\gcd$-domain, we can write $\gamma = c_1/c_2$, where $c_1, c_2 \in R$ 
such that $\gcd\{c_1,c_2\} = 1$. Define
$$
\tilde{p} := c_2^{\deg_{\gamma} p}p \qquad \mbox{and} \qquad
\tilde{q} := c_2^{\deg_{\gamma} q}q \qquad \mbox{and}
$$
Then $\tilde{p}, \tilde{q} \in R^m$ and 
\begin{align*}
h = \tilde{g}(\tilde{p}_1 x_1 + \tilde{p}_2 x_2 + \cdots + \tilde{p}_m x_m, 
    \tilde{q}_1 x_1 + \tilde{q}_2 x_2 + \cdots + \tilde{q}_m x_m) + {} \\
    a_1 x_1 + a_2 x_2 + \cdots + a_m x_m 
\end{align*}
for some $\tilde{g} \in L[t,u]$. We shall show that $\tilde{g} \in R[t,u]$.

Let $C \in \Mat_{2,m}(K[\gamma]) \subseteq \Mat_{2,m}(R[c_2^{-1}])$ 
be a left inverse of $(p\,|\,q)$. Then
$$
C \cdot (\tilde{p}\,|\,\tilde{q}) = \left(\begin{array}{cc}
c_2^{\deg_{\gamma} p} & 0 \\ 0 & c_2^{\deg_{\gamma} q} 
\end{array} \right) = (\tilde{p}\,|\,\tilde{q})\tp \cdot C\tp
$$
so
$$
h\big(C\tp\tbinom{t}{u}\big) = 
\tilde{g}\Big(c_2^{\deg_{\gamma} p}t, c_2^{\deg_{\gamma} q}u\Big) + a\tp C\tp\tbinom{t}{u}
$$
Since $h\big(C\tp\tbinom{t}{u}\big) - a\tp C\tp\tbinom{t}{u} \in R[c_2^{-1}][t,u]$, we infer that $\tilde{g} \in R[c_2^{-1}][t,u]$.

Since $\hat{p} \in K^m$ and $\hat{q} \in K^m$ are independent, there exists a 
$\hat{C} \in \Mat_{2,m}(K)$ which is a left inverse of $(\hat{p}\,|\,\hat{q})$.
Consequently,
$$
\hat{C} \cdot (\tilde{p}\,|\,\tilde{q}) \equiv \left(\begin{array}{cc} 
c_1^{\deg_{\gamma} p} & 0 \\ 0 & c_1^{\deg_{\gamma} q} 
\end{array} \right) \equiv (\tilde{p}\,|\,\tilde{q})\tp \cdot \hat{C}\tp \pmod{c_2} 
$$
Now take $j$ minimal, such that $c_2^j \tilde{g}[t,u] \in R[t,u]$. Then
$$
c_2^j h\big(\hat{C}\tp\tbinom{t}{u}\big) \equiv 
c_2^j \tilde{g}\Big(c_1^{\deg_{\gamma} p}t, c_1^{\deg_{\gamma} q}u\Big) + c_2^j a\tp \hat{C}\tp\tbinom{t}{u}
\pmod{c_2} 
$$
Suppose that $j \ge 1$. Then $c_2 \mid c_1^i c_2^j \tilde{g}[t,u]$ for some $i \ge 0$. As $\gcd\{c_1,c_2\} = 1$, $c_2 \mid c_2^j \tilde{g}[t,u]$. This contradicts the minimality of $j$, so $j = 0$ and $\tilde{g} \in R[t,u]$.
\end{proof}

\begin{proof}[Proof of the third claim from the end of theorem \ref{gradrel}.]
Say that $R$ is the polynomial ring $K[x_{m+1},x_{m+2},\ldots,x_n]$.
Suppose that $h$ is of the form of (ii) of theorem \ref{gradrel} and $\deg b < 2$.
Then $\deg b = 1$, because $b \notin K^m$. So there exists an $i$ such that the 
vector $p^{(1)} \in K^m$ of coefficients of $x_i$ in $H = b$ is nonzero. By substituting 
$x_i = x_i + t$ in $H$, we see that $p^{(1)}$ is a projective image apex of $H$ over $K$.
Contradiction, so $\deg b \ge 2$ if $h$ is of the form of (ii) of theorem \ref{gradrel}.

Suppose that $h$ is of the form of (iii) of theorem \ref{gradrel} and $\deg p < 2$.
Then $\deg p = 1$, because $p \notin K^m$. So there exists an $i$ such that the 
vector $p^{(1)} \in K^m$ of coefficients of $x_i$ in $p$ is nonzero. 
By substituting $x_i = x_i + u/t$ in $tp + a$, we see that 
$p^{(1)}$ is a projective image apex of $tp + a$. By way of proposition 
\ref{relprop}, it follows from the last claim in (iii) of theorem \ref{gradrel} 
that $p^{(1)}$ is a projective image apex of $H$ over $K$. Contradiction,
so $\deg p \ge 2$ if $h$ is of the form of (iii) of theorem \ref{gradrel}.
\end{proof}

\begin{proof}[Proof of the last but one claim of theorem \ref{gradrel}.]
Suppose that $H$ is graded homogeneous of degree $\delta$. Suppose that $f \in K[y]$ such that $f(H) = 0$. Let $f^{(j)}$ the part of degree $j$ for each $j$. Then $f^{(j)}(H)$ is graded homogeneous of degree $j\delta$ or zero for each $j$. Since the grading group is torsion-free, we infer that $f^{(j)}(H) = 0$ for each $j$. So $f^{(j)}\big((1-t)H\big) = (1-t)^j f^{(j)}(H) = 0$ for each $j$. Hence $f\big((1-t)H\big) = 0$.
\end{proof}

\begin{proof}[Proof of the last claim of theorem \ref{gradrel}.]
To prove the last claim, suppose that $h$ is graded homogeneous and $a=0$ is an image apex of $H$. We first show that $h \in L[\sum_{i=1}^m p_i x_i]$ and $h \in L[\sum_{i=1}^m p_i x_i, \allowbreak\sum_{i=1}^m q_i x_i]$ respectively, such that the sums are graded homogeneous. To find $p$, we differentiate $h$ with respect to a variable $x_j$ such that it does not get constant, until it gets a nontrivial linear part. We take $p$ such that $\sum_{i=1}^m p_i x_i$ is that linear part.

To find $q$, we first take any $q$ (independent of $p$), such that we can write $h = g(\sum_{i=1}^m p_i x_i,\allowbreak\sum_{i=1}^m q_i x_i)$, where $g \in L[t,u]$. 
If $\deg_u g = 0$, then we do not need $q$, so assume that $\deg_u g > 0$.
We first adapt $h$ by replacing $g$ by a homogeneous part of it which in not killed by $\parder{}{u}g$, say of degree $d$. This is possible, because $h$ is just replaced by its homogeneous part of degee $d$. So $\deg_u g > 0$ is preserved. While $\deg_u g < d = \deg g$, we differentiate $h$ with respect to a variable $x_j$ for which $p_j \ne 0$, and replace $h$ by the result. This way, we obtain $\deg_u g = d$ without affecting $\deg_u g$.

While $d > 1$, we repeat the following. If we replace $h$ by $\parder{}{x_1} h$, then the new $g$ becomes either zero or homogeneous of degree $d-1$, with $d$ still as before, but the coefficient of $u^{d-1}$ of the new $g$ might become zero. In this case, the ratio of $p_1$ and $q_1$ corresponds to a number which is determined by the coefficients of $t^0 u^d$ and $t^1 u^{d-1}$ of the old $g$. But since $p$ and $q$ are independent, there exists a $j$ for which $p_j$ and $q_j$ do not have the critical ratio, so that replacing $h$ by $\parder{}{x_j} h$ decreases $d = \deg g$ by $1$ without affecting $\deg_u g = d$.

So we obtain $\deg_u g = \deg g = 1$. We replace $q$ by the coefficient vector of the linear part of $h$ at this stage. Notice that the first step of adapting $h$ by its homogeneous part of degree $d$ was for notational convenience only: we can apply the same differentiations on $h$ itself to obtain the new $q$ from the linear part of $h$ at the end.

Having $p$ of the desired form, we can run the above proof of (iii) without affecting this. The same holds for the case $q = 0$ of (v). So assume that we cannot take $q = 0$. Then $p$ is not an $L$-multiple of a vector over $K$. Hence we may assume that $p_2 \ne 0$ and $p_1/p_2 \notin K$. Then $p_1, p_2 \in K[\gamma]$ cannot be obtained in the same way as before, because the graded homogeneity of $p_1$ and $p_2$ may be affected. But $p_1/p_2 \in K(\gamma)$ holds. 

Notice that $p_1$ and $p_2$ do not need or homogeneous of the same graded degree, because the graded degrees of $x_1$ and $x_2$ may be different. Furthermore, $p_1$ and $p_2$ do not need to be relatively prime. From lemma \ref{gammahmg}, it follows that we can take for $\gamma$ a reduced fraction $c_1/c_2$, such that both $c_1$ and $c_2$ are graded homogeneous. If $c_1$ and $c_2$ have the same graded homogeneous degree, then so have $x_1$ and $x_2$, and the above proof of (v) can be followed. So assume that $c_1$ and $c_2$ do not have the same graded homogeneous degree.

From lemma \ref{lefgr}, it follows that the grading subgroup of $K[c_1,c_2,x_1,\ldots,x_m]$ can be embedded in $\R$. Without loss of generality, we assume that $c_1$ has larger graded homogeneous degree than $c_2$. We can obtain that each component of $p$ and $q$ is either zero, or a $K$-multiple of a nonnegative power of $\gamma$. Since it is not clear if the techniques at the end of section \ref{sec:rtheorem} preserve this, we will adapt $p$ and $q$ in another way to obtain that $C$ and $\hat{C}$ as in the proof of (v) exists.

Assume without loss of generality that $x_1$ is a variable of maximum graded homogeneous degree, for which the coefficient in either $p$ or $q$ is nonzero. Then we may assume that the coefficients in $p$ and $q$ of $x_1$ are $1$ and $0$ respectively. Assume without loss of generality that $x_2$ is a variable of maximum graded homogeneous degree, for which the coefficient in $q$ is nonzero. Then we may assume that the coefficient in $q$ of $x_2$ is $1$. From this, it follows that the left-inverse $C$ of $(p\,|\,q)$ exists. 

Assume without loss of generality that $x_3$ is a variable of minimum graded homogeneous degree, for which the coefficient in either $p$ or $q$ is nonzero. Then we may assume that the coefficient in either $p$ or $q$ of $x_3$ is zero. From this, it follows that the left-inverse $\hat{C}$ of $(\hat{p}\,|\,\hat{q})$ exists. So the above proof of (v) can be followed in this case as well.
\end{proof}

\begin{lemma} \label{gammahmg}
Let $R$ be a $\gcd$-domain over $K$ with fraction field $L$. Suppose that $R$ is graded with a torsion-free group, such that $K$ has trivial grading. Suppose that $p_1, p_2 \in R$ are graded homogeneous, and $p_1/p_2 \notin K$. Suppose that $\gamma \in L(x)$ is nonzero, such that $p_1/p_2 \in K(\gamma)$. Write $\gamma = c_1/c_2$ as a reduced fraction over $R$.

If $p_1$ and $p_2$ are graded homogeneous of the same degree, then $c_1$ and $c_2$ are graded homogeneous of the same degree as well. If $p_1$ and $p_2$ are graded homogeneous of different degrees and $K$ is algebraically closed in $L$, then $c_1$ and $c_2$ can be taken graded homogeneous without affecting $K(c_1/c_2)$, after which $p_1/p_2$ is a $K$-multiple of an integral power of $\gamma=c_1/c_2$.
\end{lemma}

\begin{proof}
There are homogeneous polynomials $g_1,g_2 \in K[y_1,y_2]$ of the same degree, say $i$, such that $g_1/g_2$ is a reduced fraction and $p_1/p_2 = g_1(c_1,c_2)/g_2(c_1,c_2)$. Since $R$ is a $\gcd$-domain, we infer that $(c_1,c_2)$ is superprimitive as on the second page of \cite{MR3806744}. From \cite[Theorem 2.5]{MR3806744}, it follows that $g_1(c_1,c_2)/g_2(c_1,c_2)$ is a reduced fraction. Since $R$ is a $\gcd$-domain, we infer that $g_1(c_1,c_2)/g_2(c_1,c_2)$ is the reduced fraction of $p_1/p_2$. So there exists a $c_3 \in R$ such that $p_1 = c_3 g_1(c_1,c_2)$ and $p_2 = c_3 g_2(c_1,c_2)$.

Since we will only use the grading to prove that some elements of $K[c_1,c_2,c_3]$ are graded homogeneous, it follows from lemma \ref{lefgr} below that we may assume that the grading group is $\R$. It follows that $c_3$,  $g_1(c_1,c_2)$ and $g_2(c_1,c_2)$ are graded homogeneous. If $c_1$ and $c_2$ are graded homogeneous of degree $\delta$, then 
$g_1(c_1,c_2)$ and $g_2(c_1,c_2)$ are graded homogeneous of degree $i\delta$, and $p_1$ and $p_2$ are graded homogeneous of the same degree.

So assume that $c_1$ and $c_2$ are not graded homogeneous of the same degree, if at all.   We will show below that $g_1$ and $g_2$ have linear factors $\lambda_1 y_1 + \lambda_2 y_2$ and $\mu_1 y_1 + \mu_2 y_2$ respectively over $K$. So $\lambda_1 c_1 + \lambda_2 c_2 \mid g_1(c_1,c_2)$ and $\mu_1 c_1 + \mu_2 c_2 \mid g_2(c_1,c_2)$ are graded homogeneous. 
Since $g_1/g_2$ is a reduced fraction, we deduce that $\lambda_1 \mu_2 \ne \lambda_2 \mu_1$, so we can replace $(c_1,c_2)$ by $(\lambda_1 c_1 + \lambda_2 c_2, \mu_1 c_1 + \mu_2 c_2)$ without affecting $K(c_1/c_2)$. The result is that $c_1$ and $c_2$ become graded homogeneous of distinct degrees. Now it is straightforward to verify that $p_1/p_2$ is a $K$-multiple of $(c_1/c_2)^i$.

So it remains to show that both $g_1$ and $g_2$ have a linear factor over $K$. We only show that $g_1$ has a linear factor over $K$, since things are similar for $g_2$. Since $c_1$ and $c_2$ are not graded homogeneous of the same degree, either the highest or lowest degree part of $(c_1,c_2)$ is a root of $g_1$. Suppose that $(\tilde{c}_1,\tilde{c}_2) \ne (0,0)$ is a nontrivial homogeneous part of $(c_1,c_2)$ which is a root of $g_1$.
If $\tilde{c}_1 = 0$ or $\tilde{c}_2 = 0$, then $y_1 \mid g_1$ or $y_2 \mid g_1$ respectively. 
So assume that $\tilde{c}_1 \tilde{c}_2 \ne 0$. Then $\tilde{c}_2^i g_1(\tilde{c}_1/\tilde{c}_2,1) = g_1(\tilde{c}_1,\tilde{c}_2) = 0$, so $\lambda_2 := \tilde{c}_1/\tilde{c}_2 \in L$ is algebraic over $K$. Since $K$ is algebraically closed in $L$, we infer that $\lambda_2 \in K$, and that $g_1(y_1,1)$ is divisible by the minimum polynomial $y_1 - \lambda_2$ of $\lambda_2$. From \cite[Lemma 2.2]{MR3806744}, it follows that $y_1 - \lambda_2 y_2 \mid g_1$. So we can take $\lambda_1 = -1$.
\end{proof}

The condition that $K$ is algebraically closed in $L$ can be weakened to that the algebraic closure of $K$ in $L$ is separable over $K$. The proof of this can be done by extending the grading of $R$ to $R[1/\tilde{c}_2]$ and showing that the minimum polynomial of $\lambda_2 = \tilde{c_1}/\tilde{c}_2$ over $K$ is a power of $y_1-\lambda_2$. But the condition cannot be omitted. Take e.g.\@ $K = \F_2[t^2]$, $p_1/p_2 = u^2$, and $\gamma = u + t$.

\begin{lemma}[Lefschetz's principle for torsion-free group gradings] \label{lefgr}
Suppose that $R$ is a commutative ring which is graded with a torsion-free group, such that $K$ has trivial grading. Let $G$ be the grading subgroup of $R$, i.e.\@ all group elements which are generated by the degrees of graded homogeneous components of elements of $R$. 

Let $\tilde{R}$ be the subring of $R$ which is generated by the graded homogeneous components of elements $c_1, c_2, c_3, \ldots \in R$, and $\tilde{G}$ be the grading subgroup of $\tilde{R}$. Then there exists a homomorphism from $G$ to $\R$ which
is injective on $\tilde{G}$.
\end{lemma}

\begin{proof}
Since $R$ is commutative, we deduce that $G$ is a torsion-free abelian group. Hence $G$ is a torsion-free $\Z$-module. Since $G$ is torsion-free, we infer that $G$ embeds in its localization with respect to $\Z \setminus \{0\}$. This localization is a vector space $V$ over $\Q$. Assume without loss of generality that $G \subseteq V$. Notice that $\tilde{G}$ is at most countably generated, say by ${\mathcal C}$. On account of Zermelo's well-ordering theorem, there exists a well-ordering on both ${\mathcal C}$ and $V \setminus {\mathcal C}$, which we combine to a well-ordering of $V$ by assuming that elements of ${\mathcal C}$ are smaller than those of $V \setminus {\mathcal C}$. 

Let ${\mathcal B}$ be the subset of nonzero elements of $V$ which cannot be written as a linear combination of smaller elements of $V$. Then the elements of ${\mathcal B}$ are linearly independent, because the largest element of ${\mathcal B}$ in a possible dependence relation does not exist. We show that every nonzero element $v \in V \setminus {\mathcal B}$ is a linear combination of elements of ${\mathcal B}$ which are smaller than $v$. Suppose that this is not the case, and let $v$ be the smallest counterexample. Since $v \notin {\mathcal B}$, we can write $v$ as a linear combination of smaller elements $w$ of $V$. Each of these $w$ is either contained in ${\mathcal B}$, or a linear combination of elements of ${\mathcal B}$ which are smaller than $w$ and hence also smaller than $v$. Contradiction.

Let ${\mathcal D} = {\mathcal C} \cap {\mathcal B}$. Then ${\mathcal D}$ is a basis of the linear span of ${\mathcal C}$, which extends to a basis ${\mathcal B}$ of $V$.
Since ${\mathcal D} \subseteq {\mathcal C}$, we see that ${\mathcal D}$ is at most countable, say ${\mathcal D} = \{d_1,d_2,d_3,\ldots\}$. Now define a group homomorphism to $\R$ by sending $d_j$ to $\eu^j$ for each $j \in \{1,2,3,\ldots\}$, and sending elements of ${\mathcal B} \setminus {\mathcal D}$ to $0$, where $\eu$ is Eulers number. Since $\eu$ is transcendental over $\Q$, we deduce that $\tilde{G}$ is embedded in $\R$.
\end{proof}

\section{Theorem \ref{HEredc} for gradient maps} \label{iiiniv}

\begin{proof}[Proof of {\upshape (iii)} of theorem \ref{HEredc}.]
The case where $\rk \jac H \le 2$ follows from (i), so assume that 
$\rk \jac H = 3$. 

Just like for $M$ in the proof of proposition \ref{HEredb} and $N$ in 
the proof of (i) and (ii) of theorem \ref{HEredc}, theorem \ref{cominherred} 
allows us to assume that $N = \rk \jac H + 2 = 5$.
Take $k$ and $k'$ as in the proofs of proposition \ref{HEredb} and (i) and (ii) of 
theorem \ref{HEredc}.
The case $k' = s+1$ follows in a similar manner as in the proofs of 
proposition \ref{HEredb} and (i) and (ii) of theorem \ref{HEredc}.

So assume that $k' \ge s+2$. The case $k \le k'$ follows in a similar manner
as in the proofs of (i) of proposition \ref{HEredb} and (i) of theorem \ref{HEredc}.
So assume that $k \ge k' + 1 \ge s + 3$. From the minimality of $k$, it follows
that $k - s \le N - k'$, so $k + k' \le N + s = s + 5$. If we combine this with
$k \ge s+3$ and $k' \ge s+2$, then we see that $s = 0$, $k = 3$ and $k' = 2$.

Let $R = K[x_4,x_5]$ and $\tilde{R}= K[x_1,x_2]$.
From (iii) and (iv) of theorem \ref{gradrel}, it follows that $H = \grad \tilde{h}$
for some $\tilde{h} \in K[X]$, such that
$$
\tilde{h} = g(p_1x_1+p_2x_2+p_3x_3) + 
            b_1x_1+b_2x_2+b_3x_3
$$
where $g \in R[t]$ and $p_i,b_i\in R$ for all $i$, and
$$
\tilde{h} = \tilde{g}(\tilde{p}_3x_3+\tilde{p}_4x_4+\tilde{p}_5x_5) + 
            \tilde{b}_3x_3+\tilde{b}_4x_4+\tilde{b}_5x_5
$$
where $\tilde{g} \in \tilde{R}[t]$ and $\tilde{p}_i,\tilde{b}_i\in \tilde{R}$ for all $i$.

We first show that we may assume that $p_2$ and $p_3$ do not have a constant term. 
Let $v := \big(p_1(0,0),p_2(0,0),p_3(0,0)\big)$ and assume that  
$v \ne 0$. Then there exists a $T \in \GL_3(K)$ such that $v\tp T = (1~0~0)$.
Now replace $H$ by $\tilde{T}\tp H(\tilde{T}X)$ and $\tilde{h}$ by $\tilde{h}(\tilde{T}X)$, 
where
$$
\tilde{T} = \left(\begin{array}{ccccc}
& & & 0 & 0 \\
& T & & 0 & 0 \\
& & & 0 & 0 \\
0 & 0 & 0 & 1 & 0 \\
0 & 0 & 0 & 0 & 1
\end{array} \right)
$$
Then $v$ becomes $(1,0,0)$, so $p_2$ and $p_3$ do not have a constant term any more. 
The property that $(H_3,H_4,H_5) = (H_{k'+1},H_{k'+2},\allowbreak H_{k'+3})$ 
does not have a projective image apex may be affected, but restoring that as in the proofs 
of proposition \ref{HEredb} and (i) and (ii) of theorem \ref{HEredc} will not affect 
$v = (1,0,0)$. 

So $p_3 \notin K^{*}$. We distinguish two cases.
\begin{itemize}

\item $\deg_{x_3} \tilde{h} \le 1$. \\
Then the coefficient of $x_3^1$ is contained in $R \cap \tilde{R} = K$,
so $H_3 \in K$. We will derive a contradiction by showing that $H_2, H_3, H_4$
are algebraically independent over $K$. From the proof of proposition \ref{HEredb}, 
it follows that $H_1$ and $H_5$ are algebraically dependent over $K$ on $H_2,H_3,H_4$. 
Since $\rk \jac H = 3$, we deduce that $H_2, H_3, H_4$ is a transcendence basis over 
$K$ of $K(H)$. 

\item $\deg_{x_3} \tilde{h} \ge 2$. \\
Then the leading
coefficient with respect to $x_3$ of $\tilde{h}$ is contained in $R$,
but not in $K$, because it is divisible by $p_3 \notin K$. On the other
hand, the leading coefficient with respect to $x_3$ of $\tilde{h}$ is 
contained in $\tilde{R}$. This contradicts
$(R \setminus K) \cap \tilde{R} = \varnothing$. \qedhere

\end{itemize}
\end{proof}

\begin{proof}[Proof of {\upshape (iv)} of theorem \ref{HEredc}.]
The case where $\rk \jac H \le 3$ follows from (ii), so assume that 
$\rk \jac H = 4$.

Just like for $M$ in the proof of proposition \ref{HEredb} and $N$ in 
the proof of (i) and (ii) of theorem \ref{HEredc}, theorem \ref{cominherred} 
allows us to assume that $N = \rk \jac H + 2 = 6$.
Take $k$ and $k'$ as in the proofs of proposition \ref{HEredb} and (i) and (ii) of 
theorem \ref{HEredc}.
The case $k' = s+1$ follows in a similar manner as in the proofs of 
proposition \ref{HEredb} and (i) and (ii) of theorem \ref{HEredc}.

So assume that $k' \ge s+2$. The case $k \le k'+1$ follows in a similar manner
as in the proofs of (ii) of proposition \ref{HEredb} and (ii) of theorem \ref{HEredc}.
So assume that $k \ge k' + 2 \ge s + 4$. From the minimality of $k$, it follows
that $k - s \le N - k'$, so $k + k' \le N + s = s + 6$. If we combine this with
$k \ge s+4$ and $k' \ge s+2$, then we see that $s = 0$, $k = 4$ and $k' = 2$.

Let $R = K[x_5,x_6]$, $L = K(x_5,x_6)$, $\tilde{R}= K[x_1,x_2]$ and $\tilde{L}= K(x_1,x_2)$.
Suppose that $a \in K^6$ is an image apex of $H$.
From (v) of theorem \ref{gradrel}, it follows that $H = \grad \tilde{h} + a$
for some $\tilde{h} \in K[X]$, such that
$$
\tilde{h} = g(p_1x_1+p_2x_2+p_3x_3+p_4x_4,q_1x_1+q_2x_2+q_3x_3+q_4x_4)
$$
where $g \in L[t,u]$ and $p_i,q_i\in R$ for all $i$, and
$$
\tilde{h} = \tilde{g}(\tilde{p}_3x_3+\tilde{p}_4x_4+\tilde{p}_5x_5+\tilde{p}_6x_6,
            \tilde{q}_3x_3+\tilde{q}_4x_4+\tilde{q}_5x_5+\tilde{q}_6x_6)
$$            
where $\tilde{g} \in \tilde{L}[u]$ and $\tilde{p}_i,\tilde{q}_i\in \tilde{R}$ for all $i$.

Let $(c_1,c_2,c_3,c_4) \in R^4$ be an $L$-linear combination of $(p_1,p_2,p_3,p_4)$ and
$(q_1, \allowbreak q_2,q_3,q_4)$, such that its constant part 
$v := \big(c_1(0,0),c_2(0,0),c_3(0,0),c_4(0,0)\big) \ne 0$ 
in the case where it is possible to take $(c_1,c_2,c_3,c_4)$ as such. Just like for the 
assumption that $v = (1,0,0)$ if $v \ne 0$ in the proof of (iii) of theorem \ref{HEredc}, 
we may assume in this proof that $v = (1,0,0,0)$ if $v \ne 0$. 

From the proof of proposition \ref{HEredb}, it follows that $H_1$ and $H_6$ are 
algebraically dependent over $K$ on $H_2,H_3,H_4,H_5$. Since $r = 4$, we deduce that 
$H_2, H_3, H_4, H_5$ is a transcendence basis over $K$ of $K(H)$. So
$H_3 - a_3$ and $H_4 - a_4$ are linearly independent over $K$.

Assume without loss of generality that $q_3 = 0$ and $\tilde{q}_3 = 0$.
Then $p_3\tilde{p}_3 \ne 0$, because $H_3 - a_3 \ne 0$. 
We distinguish three cases.
\begin{itemize}
 
\item $q_4 = 0$. \\
From $q_3 = q_4 = 0$, it follows that
$$
p_4 (H_3-a_3) - p_3 (H_4-a_4) = 0
$$
Since $H_3 - a_3$ and $H_4 - a_4$ are linearly independent over $K$, it follows
that $p_4 \ne 0$ and $p_4^{-1}p_3 \notin K$. 

From $q_3 = q_4 = 0$, it follows that
$$
j := \deg_t g = \deg_{x_3} \tilde{h} = \deg_{x_4} \tilde{h} 
$$
Let 
$$
\tilde{c}_3 := \parder[j]{}{x_3} \tilde{h} \qquad \mbox{and} \qquad 
\tilde{c}_4 := \parder[j]{}{x_4} \tilde{h}
$$
Then
$$
\tilde{c}_3, \tilde{c}_4 \in K[x_1,x_2,x_5,x_6] = \tilde{R}[x_5,x_6] 
\qquad \mbox{and} \qquad \tilde{c}_3 = p_4^{-j}p_3^j \tilde{c}_4
$$
Since $p_4^{-j}p_3^j \in L \setminus K = L \setminus \tilde{L}$,  
it follows that either $\tilde{c}_3 \notin \tilde{L}$ or 
$\tilde{c}_4 \notin \tilde{L}$, say that $\tilde{c}_3 \notin \tilde{L}$.

Notice that $\tilde{c}_3 \in K[x_1,x_2,x_5,x_6] \setminus \tilde{L} \subseteq \tilde{L}[x_5,x_6] \setminus \tilde{L}$. Since additionally
$$
\tilde{c}_3 \in \tilde{L}[\tilde{p}_3x_3+\tilde{p}_4x_4+\tilde{p}_5x_5+\tilde{p}_6x_6,
                          \tilde{q}_3x_3+\tilde{q}_4x_4+\tilde{q}_5x_5+\tilde{q}_6x_6] 
$$
it follows that $\tilde{q}_4 = 0$ along with $\tilde{q}_3 = 0$.
Furthermore, $\tilde{c}_3 \in \tilde{L}[\tilde{q}_5x_5+\tilde{q}_6x_6]$ and
$$
\tilde{p}_4 (H_3-a_3) - \tilde{p}_3 (H_4-a_4) = 0 = p_4 (H_3-a_3) - p_3 (H_4-a_4)
$$
So $p_4^{-1} p_3 = \tilde{p}_4^{-1} \tilde{p}_3 \in L \cap \tilde{L} = K$. Contradiction.

\item $q_4 \in K^{*}$. \\
From (iv) of lemma \ref{gcddomain}, it follows that we may assume that $p_4 = 0$ and
$g \in R[t,u]$. 

Suppose first that $p_3 \in K$. Notice that
$v \ne 0$ was possible, so $v = (1,\allowbreak 0,\allowbreak 0,0)$. 
Since $q_3 = 0 = p_4$ and $p_3q_4 \ne 0$, it follows that 
$c_i = c_3 p_3^{-1} p_i + c_4 q_4^{-1} q_i$ for all $i$. 
As $p_3 \in K^{*}$ and $q_4 \in K^{*}$, this contradicts that $c_1(0) = 1$ and $c_3(0) = c_4(0) = 0$.

Suppose next that $p_3 \notin K$. From $q_3 = 0 \ne H_3 - a_3$, it follows that
$$
j := \deg_t g = \deg_{x_3} \tilde{h} \ge 1
$$
Take $d$ as large as possible, such that $t^j u^{d-j}$ is a term of $g$, and define 
$$
\tilde{c}_3 := \parder[j]{}{x_3} \parder[d-j]{}{x_4} \tilde{h} 
$$
Then $\tilde{c}_3 \in R = K[x_5,x_6] \subseteq \tilde{L}[x_5,x_6]$.
Since $g \in R[t,u]$, it follows that $p_3 \mid p_3^j \mid \tilde{c}_3$.
So $\deg_{x_5,x_6} \tilde{c}_3 \ge 1$ and $\tilde{c}_3 \notin \tilde{L}$. Just as in the proof of the case $q_4 = 0$
above, we can deduce that $\tilde{q}_4 = 0$ along with $\tilde{q}_3 = 0$. Since the condition $v = (1,0,0,0)$ is not used in the proof of the case $q_4 = 0$ above, we can get a contradiction to $\tilde{q}_4 = 0$ in a similar manner.

\item $q_4 \notin K$. \\
Let $d := \deg g$. We will show below that there exists a 
$j \in \{1,2,\ldots,d\}$, such that either $\tilde{c}_3 \ne 0$ or 
$\tilde{c}_4 \ne 0$, where
$$
\tilde{c}_i := \parder{}{x_i}\parder[j-1]{}{x_3}\parder[d-j]{}{x_4} \tilde{h}
$$
for all $i \le 4$. Since $1 + (j-1) + (d-j) = d$, we see that 
$\tilde{c}_i \in R$ for all $i$.

We will show now that $j$ can be taken as claimed.
If the leading homogeneous part of $g$ is contained in 
$K[u]$, then we take $j = 1$, so
$$
\tilde{c}_4 = \parder{}{x_4} \parder[1-1]{}{x_3}\parder[d-1]{}{x_4} \tilde{h} 
= \parder[d]{}{x_4} \tilde{h} \ne 0
$$
If the leading homogeneous part of $g$ is not contained in $K[u]$, then 
we take for $j$ its positive degree with respect to $t$, so
$$
\tilde{c}_3 = \parder[j]{}{x_3}\parder[d-j]{}{x_4} \tilde{h} \ne 0
$$

Suppose first that either $\tilde{c}_3 \notin K$ or $\tilde{c}_4 \notin K$. Since $\tilde{c}_i \in R$ for all $i$, either $\tilde{c}_3 \in R \setminus K = K[x_5,x_6] \setminus \tilde{L}$, or $\tilde{c}_4 \in R \setminus K = K[x_5,x_6] \setminus \tilde{L}$.
So we can derive in a similar manner as in the proof of the case 
$q_4 = 0$ above that $\tilde{q}_4 = 0$ along with $\tilde{q}_3 = 0$. Since the condition $v = (1,0,0,0)$ is not used in proof of the case $q_4 = 0$ above, we can get a contradiction to $\tilde{q}_4 = 0$ in a similar manner.

Suppose next that $\tilde{c}_3 \in K$ and $\tilde{c}_4 \in K$. Since
$$ 
\parder[j-1]{}{x_3} \parder[d-j]{}{x_4} \tilde{h} = 
\tilde{c}_1 x_1 + \tilde{c}_2 x_2 + \tilde{c}_3 x_3 + \tilde{c}_4 x_4
$$
is an $L$-linear combination of $p_1x_1+p_2x_2+p_3x_3+p_4x_4$ and
$q_1x_1+q_2x_2+q_3x_3+q_4x_4$, we can replace $q_i$ by $\tilde{c}_i$ for all $i$, where it might be necessary to adapt $p$ as well. Furthermore, $q_3 = 0$ might be affected, but just as we obtained $v = (1,0,0,0)$ above, we can obtain $(q_3,q_4) = (0,1)$, which yields the case $q_4 \in K^{*}$ above. \qedhere

\end{itemize}
\end{proof}

\bibliographystyle{hessmallrank}
\bibliography{hessmallrank}

\end{document}